\newtheorem{Def}{Definition}[section]
\newtheorem{lem}[Def]{Lemma}
\newtheorem{theo}[Def]{Theorem}
\newtheorem{pro}[Def]{Proposition}
\newtheorem{rem}[Def]{Remark}
\newtheorem{assum}{Assumption}
\definecolor{Green}{RGB}{0,128,0}
\newcommand{\LL}{\langle}
\newcommand{\RR}{\rangle}
\newcommand{\OO}{\mathcal O}
\newcommand{\R}{\mathbb R}
\newcommand{\mcal}{\mathcal}
\newcommand{\mscr}{\mathscr}
\newcommand{\mbb}{\mathbb}
\newcommand{\mbf}{\mathbf}
\newcommand{\ud}{\mathrm d}
\newcommand{\PD}{\partial}
\numberwithin{equation}{section}
\allowdisplaybreaks \allowdisplaybreaks[4]%change line
\begin{document}

\title[LDP]{Asymptotics of large deviations of finite difference method for stochastic Cahn--Hilliard equation}

\author{Diancong Jin}
\address{School of Mathematics and Statistics, Huazhong University of Science and Technology, Wuhan 430074, China;
	Hubei Key Laboratory of Engineering Modeling and Scientific Computing, Huazhong University of Science and Technology, Wuhan 430074, China}
\email{jindc@hust.edu.cn}

\author{Derui Sheng}
\address{LSEC, ICMSEC, Academy of Mathematics and Systems Science, Chinese Academy of Sciences, Beijing 100190, China}
\email{sdr@lsec.cc.ac.cn}

\thanks{This work is supported by National key R\&D Program of China (No.\ 2020YFA0713701), National Natural Science Foundation of China (No. 12201228), and the Fundamental Research Funds for the Central Universities 3004011142.
}

\keywords{
Large deviations rate function, Finite difference method, Convergence analysis, $\Gamma$-convergence, Stochastic Cahn--Hilliard equation.}

\begin{abstract}
In this work, we establish the Freidlin--Wentzell large deviations principle (LDP) of the stochastic Cahn--Hilliard equation with small noise, which implies the one-point LDP. Further, we give the one-point LDP of the spatial finite difference method (FDM) for the stochastic Cahn--Hilliard equation. Our main result is the convergence of the one-point large deviations rate function (LDRF) of the spatial FDM, which is about the asymptotical limit of a parametric variational problem. The main idea for proving the convergence of the LDRF of the spatial FDM is via the $\Gamma$-convergence of objective functions, which relies on the
qualitative analysis of skeleton equations of the original equation and the numerical method. In order to overcome the difficulty that the drift coefficient is not one-side Lipschitz, we use the equivalent characterization of the skeleton equation of the spatial FDM and the discrete interpolation inequality to obtain the uniform boundedness of the solution to the underlying skeleton equation. This plays an  important role in deriving the $\Gamma$-convergence of objective functions.
\end{abstract}

%\end{frontmatter}
%\linenumbers
\maketitle

\textit{AMS subject classifications}:  60F10, 60H35, 49J45

\section{Introduction}
The stochastic Cahn–Hilliard equation, as an important  phase field model,  can
describe the complicated phase separation and coarsening phenomena in a melted alloy that is
quenched to a temperature at which only two different concentration phases can exist stably \cite{CH58,CH20,NCS84}. In order to  reflect the intrinsic properties of stochastic Cahn–Hilliard equations, the numerical study for them has been an indispensable tool. There have been lots of works studying the convergence of numerical methods for stochastic Cahn–Hilliard equations, including the  finite element method \cite{CCZZ18,EL92,FLZ20,FKLL18,KLM11,LM11,QW20,ZG18}, spectral Galerkin method \cite{CH20,CHS21,ZL22} and  finite difference method (FDM) \cite{Sheng1}.

In this paper, we consider the  stochastic Cahn--Hilliard equation on $\OO:=[0,\pi]$ driven by the space-time white noise:
\begin{equation}\label{SCH}
	\left\{\begin{split}
		&\frac{\partial u^\varepsilon}{\partial t}(t,x) +\Delta^2 u^\varepsilon(t, x)=\Delta b(u^\varepsilon(t, x))+\sqrt{\varepsilon}\sigma(u^\varepsilon(t, x))\dot{W}(t, x),\quad (t, x) \in(0, T] \times\OO,\\
		&u^\varepsilon(0, x)=u_{0}(x),\quad x \in\OO,\\
		&\frac{\PD u^\varepsilon}{\PD x}(t,x)=\frac{\PD (\Delta u^\varepsilon)}{\PD x}(t,x)=0, ~\text{on}~[0,T]\times\PD\OO.
	\end{split}\right.
\end{equation}
Here, $T>0$ is a given positive number, the small parameter $\varepsilon\in(0,1]$ denotes the noise intensity, $\Delta:=\frac{\PD^2}{\PD x^2}$ denotes the Laplacian,
and $\{W(t,x), (t,x)\in\OO_T\}$ is a Brownian sheet defined on a complete filtered probability space
$\big(\Omega,\mcal F,\{\mcal F_t\}_{t\in[0,T]},\mbf P\big)$
with $\{\mcal F_t\}_{t\in[0,T]}$ satisfying the usual conditions, where $\OO_T:=[0,T]\times\OO$. The nonlinearity $b(x)=x^3-x$ is the derivative of the double well potential $\frac{1}{4}(x^2-1)^2$. The conditions on $\sigma$ and $u_0$ will be specified on Section \ref{Sec2}. We refer readers to \cite{Cardon2001,CH22} for the well-posedness of \eqref{SCH}. In this work, we focus on the asymptotics of the exact solution  and  numerical solutions for \eqref{SCH} as  the noise intensity $\varepsilon$ tends to zero. It is observed that $u^\varepsilon$ will converges to its deterministic counterpart $u^0$ with $u^0$ being the solution to \eqref{SCH} with $\varepsilon=0$, for example in the mean-square sense. In many cases, one may be interested in the exact asymptotics of the probability 
$	\mbf P(|u^{\varepsilon}(t,x)-u^0(t,x)|> \delta)$  
for some $\delta>0$ and $(t,x)\in\OO_T$ (see e.g., \cite{FW}), which is usually characterized by the large deviations principle (LDP) of $\{u^{\varepsilon}(t,x)\}_{\varepsilon>0}$. 

The LDP is concerned with the exponential decay of probabilities of rare events, where the decay speed is characterized by the large deviations rate function (LDRF); see Definition \ref{LDPdef} for the rigorous definition of the LDP. Recently, the LDPs of sample paths of stochastic differential equations (SDEs) with small noise, also called the  Freidlin--Wentzell LDP, have received extensive attention (see e.g., \cite{Dupuis08,Dembo} and references therein), since the pioneering work of Freidlin and Wentzell \cite{FW}. For the stochastic Cahn--Hilliard equation \eqref{SCH}, \cite{LDPCH09} shows that $\{u^\varepsilon\}_{\varepsilon>0}$ satisfies an LDP in $\mbf C([0,T];\mbf L^p(\OO))$, $p\ge 4$. Further, \cite{LDPofCH} strengthens the above result and establishes the LDP of $\{u^\varepsilon\}_{\varepsilon>0}$ on $\mbf C^{\alpha,0}([0,T];\mbf L^p(\OO))$, where $\mbf C^{\alpha,0}([0,T];\mbf L^p(\OO))$ contains  temporally $\alpha$-H\"older continuous functions. We note that \cite{Cardon2001} shows that $u^\varepsilon$ has almost surely (a.s.) continuous trajectories provided $u_0$ is continuous. Thus, one natural question is whether the LDP on $\mbf C(\OO_T;\mbb R)$ holds for $\{u^\varepsilon\}_{\varepsilon>0}$.  For this end, the first result of this paper is to validate the LDP of $\{u^\varepsilon\}_{\varepsilon>0}$ on $\mbf C(\OO_T;\mbb R)$, which complements the existing results (see Theorem \ref{LDPofu}).

From the LDP of $\{u^\varepsilon\}_{\varepsilon>0}$ on $\mbf C(\OO_T;\mbb R)$, we immediately obtain that $\{u^{\varepsilon}(T,\bar{x})\}_{\varepsilon>0}$, $\bar{x}\in\OO$ also satisfies the LDP on $\mbb R$ with a good rate function $I:\mbb R\to[0,+\infty]$, which is called the one-point LDP of \eqref{SCH}; see Theorem \ref{LDPofu}. This means that for a Borel measurable set  $A\subseteq \mbb R$, the hitting probability
$\mbf P(u^{\varepsilon}(T, \bar{x})\in A)\asymp \exp\big({-\frac{1}{\varepsilon}\inf\limits_{x\in A} I(x)}\big)$ for sufficiently small $\varepsilon>0$. As an important quantity, the one-point
LDRF $I$ characterizes the exponential decay speed of the probability of $u^{\varepsilon}(T,\bar{x})$ deviating from its mean value $u^0(T,\bar{x})$. In terms of numerical simulation for the solution to \eqref{SCH}, a practical numerical method should preserve as many properties of \eqref{SCH} as possible, besides the convergence of itself. Thus, one natural problem is whether a numerical method can asymptotically preserve the exponential decay rate of the hitting probability $\mbf P(u^{\varepsilon}(T, \bar{x})\in A)$, when applied to \eqref{SCH}. More precisely, \emph{is there a numerical method satisfying the one-point LDP with a discrete rate function $I^n$  ($n$ being the discretization parameter) such that $I^n$ converges to $I$ in some sense?} Motivated by the above problem, this work focuses on giving the asymptotics of the one-point LDRF of the spatial FDM for \eqref{SCH}; see Section \ref{Sec3} for the construction of the spatial FDM. 

We show in Theorem \ref{LDPofun} that the spatial FDM for \eqref{SCH} satisfies the one-point LDP with a rate function $I^n$ via the Freidlin--Wentzell LDP of stochastic ordinary differential equations (SODEs) with small noise.  Notice that the one-point LDRFs $I$ and $I^n$ are implicitly determined by minimization problems (see \eqref{IandIn}):
\begin{align*}
	I(y)=\inf_{\{f\in \mathbf C(\OO_T;\mbb R)\}}J_y(f),\quad\ I^n(y)=\inf_{\{f\in \mathbf C(\OO_T;\mbb R)\}}J^n_y(f),\quad y\in\mbb R,
\end{align*}
where the objective function $J_y$ (resp. $J^n_y$) is the restriction of the  LDRF of sample paths of \eqref{SCH} (resp. the spatial FDM) on $\{f\in\mbf C(\OO_T;\mbb R):f(T,x_0)=y\}$.  $J_y$ (resp. $J^n_y$) is closely related to the solution mapping $\Upsilon$ (resp. $\Upsilon^n$) of the skeleton equation of \eqref{SCH}  (resp. the spatial FDM); see \eqref{eq:Skeleton} (resp. \eqref{DisSkeleton}) for the definition of $\Upsilon$ (resp. $\Upsilon^n$). The main purpose of this paper is to give the pointwise convergence of $I^n$ as $n\to+\infty$, which is about the asymptotical limit of minimization problems. We will use the technical  route  proposed in \cite{LDPofSWE} to analyze the pointwise convergence of $I^n$. Following the idea in \cite[Fig. 1]{LDPofSWE}, we  prove the pointwise of $I^n$ via the equi-coerciveness   and $\Gamma$-convergence of $\{J^n_y\}_{n\in\mbb N^+}$; see Appendix A for the basic introduction to $\Gamma$-convergence. Based on the technical  route in \cite{LDPofSWE}, the  convergence analysis of $I^n$ boils down to the
qualitative analysis of skeleton equations of the original equation and spatial FDM. 

First, based on the equi-continuity and uniform boundedness of $\Upsilon^n$ on bounded sets, we obtain the equi-coerciveness of $\{J^n_y\}_{n\in\mbb N^+}$ in Lemma \ref{equi}.  Then, we give the complete continuity of $\Upsilon$ and the
locally uniform convergence of $\Upsilon^n$ to $\Upsilon$, on basis of which we establish  the $\Gamma$-liminf inequality of  $\{J^n_y\}_{n\in\mbb N^+}$ in Lemma \ref{gammaLow}. Further, we drive the $\Gamma$-limsup inequality of  $\{J^n_y\}_{n\in\mbb N^+}$ in Lemma \ref{gammasup} by giving the locally Lipschitz property of $\Upsilon$ and invertibility of $\Upsilon^n$. Thus, the $\Gamma$-convergence of $\{J^n_y\}_{n\in\mbb N^+}$ comes from the $\Gamma$-liminf inequality and  $\Gamma$-limsup inequality, which combined with  the equi-continuity of $\{J^n_y\}_{n\in\mbb N^+}$ finally gives the pointwise convergence of $I^n$. The main difficulty brought about by the non-Lipschitz drift coefficient is the uniform boundedness of the skeleton equation of the spatial FDM. This is overcome by the discrete interpolation inequality and the equivalent characterization of the skeleton equation (see Lemma \ref{unbounded} and Appendix B). Different from \cite{LDPofSWE}, in the proof of the $\Gamma$-limsup inequality, we use the properties of discrete Neumann Laplacian $\Delta_{n}$ in Proposition \ref{Laplacen} to tackle the nonlinearity $\Delta_nb$. To sum up, in this work, we give the asymptotics of the one-point LDRF of the spatial FDM for stochastic Canhn--Hilliard equations with small noise. Our result indicates that the spatial FDM  can asymptotically preserve the exponential decay rate of the hitting probability $\mbf P(u^{\varepsilon}(T, \bar{x})\in A)$. 

The asymptotics of LDRFs of stochastic numerical methods has become an increasing active area. Recently, \cite{LDPosc} and \cite{LDPxde} show that a class of stochastic symplectic methods can (weakly)  asymptotically preserve the LDPs of the  underlying stochastic Hamiltonian systems, and reveal the superiority of stochastic symplectic methods from the perspective of the LDP. \cite{ChenCC} gives an error estimate between the one-point LDRF of the midpoint method and that of the linear stochastic Maxwell equations. Further, \cite{LDPofonepoint} gives the  convergence order of the one-point LDRF of the stochastic $\theta$-method for nonlinear SODEs with small noise, while \cite{LDPofSWE} analyzes the convergence of the one-point LDRF of the spatial FDM for stochastic wave equations with Lipschitz coefficients. We also refer readers to \cite{LDPofInvariant,LDPlan} for the study on the asymptotics for LDRFs of invariant measures of stochastic numerical methods. Our result first gives the convergence of one-point LDRFs of numerical methods for SPDEs with non-Lipschitz coefficients.

This article is organized as follows.  In Section \ref{Sec2}, we present some preliminaries  and the one-point LDP for \eqref{SCH}. Section \ref{Sec3} gives the one-point LDP of the spatial FDM. The pointwise convergence of the  one-point LDRF of the spatial FDM  is proved  in Section \ref{Sec:pointwise}. The appendix gives a basic introduction to $\Gamma$-convergence and a useful result.

\section{One-point LDP for stochastic Cahn--Hilliard equation}\label{Sec2}
In this section, we present the one-point LDP of the stochastic Cahn--Hilliard equation \eqref{SCH}. We begin with some notations and preliminaries on the LDP. Throughout this paper,  let $\mbb N^+$ be the set of all positive integers.  Denote by $|\cdot|$ the $2$-norm of a vector or matrix. For a given $U\subseteq \mbb R^d$, denote by $\mbf C(U;\mbb R^m)$ the space of   continuous functions from $U$ to $\mbb R^m$, endowed with the supremum norm $\|f\|_{\mbf C(U)}:=\sup_{x\in U}|f(x)|$, $f\in\mbf C(U;\mbb R^m)$. For $\alpha\in(0,1]$, denote by $\mbf C^{\alpha}(U;\mbb R^m)$ the space of all $\alpha$-H\"older continuous functions from $U$ to $\mbb R^m$, endowed  with the norm $\|f\|_{\mbf C^{\alpha}(U)}:=\|f\|_{\mbf C(U)}+[f]_{\mbf C^\alpha(U)}$, where the semi-norm
$[f]_{\mbf C^\alpha(U)}:=\sup\big\{\frac{|f(x)-f(y)|}{|x-y|^\alpha}:~x,\,y\in U,~x\neq y\big\}$. In addition, $\mbf C^{k}(U;\mbb R^m)$, $k\in\mbb N^+$, denotes the space of $k$th continuously differentiable functions  from $U$ to $\mbb R^m$, and
$\mbf C^{\infty}(U;\mbb R^m)=\cap_{k\in\mbb N^+}\mbf C^k(U;\mbb R^m)$. Let $\mbf L^p(U;\mbb R^m)$  stand for the space of measurable functions from $U$ to $\mbb R^m$ with finite norm $\|f\|_{\mbf L^p(U)}:=(\int_U |f(x)|^p\ud x)^{1/p}$  if $1\le p<+\infty$, and $\|f\|_{\mbf L^\infty(U)}:=\underset{x\in U}{\text{ess~sup}}~|f(x)|$ if $p=+\infty$.
Let $E$ and $F$ be given topological vector spaces. For a mapping $T:E\to F$, denote $T(K):=\{T(x): x\in K\}$, $K\subseteq E$, and especially denote $\text{Im}(T):=T(E)$. For a functional $T: E\to [0,+\infty]$, denote by $\mcal D_I:=\{x\in E:~ T(x)<+\infty\}$ the effective domain of $T$. The infimum of an empty set is always interpreted as $+\infty$. Let $K(a_1,a_2,\ldots,a_m)$ denote some generic constant dependent on the parameters $a_1,a_2,\ldots,a_m$, which may vary from one place to another.

Next, we give the definition of the LDP. Throughout this section, let $\mcal X$ be a \emph{Polish space}, i.e., complete and separable metric space. Below are the concepts of the rate function and LDP (see, e.g., \cite{Dembo}).
\begin{Def}\label{ratefun}
	A real-valued function $I:\mcal X\rightarrow[0,\infty]$ is called a rate function if it is lower semicontinuous, i.e., for each $a\in[0,\infty)$, the level set $I^{-1}([0,a])$ is a closed subset of $\mcal X$. If all level sets $I^{-1}([0,a])$, $a\in[0,\infty)$, are compact, then $I$ is called a good rate function.
\end{Def}

\begin{Def}\label{LDPdef}
	Let $I$ be a rate function and $\{\mu_\epsilon\}_{\epsilon>0}$ be a family of probability measures on  $\mcal X$. We say that $\{\mu_\epsilon\}_{\epsilon>0}$ satisfies an LDP on $\mcal X$ with the rate function $I$ if
	\begin{flalign}
		(\rm{LDP 1})\qquad \qquad&\liminf_{\epsilon\to 0}\epsilon\ln(\mu_\epsilon(U))\geq-\inf I(U)\qquad\text{for every open}~ U\subseteq \mcal X,\nonumber&\\
		(\rm{LDP 2})\qquad\qquad &\limsup_{\epsilon\to 0}\epsilon\ln(\mu_\epsilon(C))\leq-\inf I(C)\qquad\text{for every closed}~ C\subseteq \mcal X.&\nonumber
	\end{flalign}
\end{Def} 
\noindent We also say that a family of random variables  $\{Z_{\epsilon}\}_{\epsilon>0}$ valued on $\mcal X$  satisfies an LDP with the rate function $I$, if its distribution satisfies  (LDP1) and  (LDP2) in Definition \ref{LDPdef}. We refer readers to \cite{Dembo}  for more details about LDP.

\begin{pro}\cite[Theorem 4.2.1]{Dembo}\label{contraction}
	Let $\mcal Y$	be another Polish space,  $f:\mcal X\to\mcal Y$ be a continuous function, and $I: \mcal X\to [0,\infty]$ be a good rate function.
	\begin{itemize}
		\item[(a)] For each $y\in\mcal Y$, define
		\begin{align*}
			\tilde{I}(y)\triangleq\inf\left\{I(x):~x\in\mcal X,\quad y=f(x)\right\}.
		\end{align*}
		Then $\tilde{I}$ is a good rate function on $\mcal Y$.
		\item[(b)] If $I$ controls the LDP associated with a family of probability measures $\{\mu_{\epsilon}\}_{\epsilon>0}$ on $\mcal X$, then $\tilde{I}$ controls the LDP associated with  the family of probability measures $\left\{\mu_{\epsilon}\circ f^{-1}\right\}_{\epsilon>0}$ on $\mcal Y$.
	\end{itemize}
\end{pro}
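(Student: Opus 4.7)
The plan is to reduce both claims to two elementary identities:
\begin{align*}
\tilde{I}^{-1}([0,\alpha]) &= f\bigl(I^{-1}([0,\alpha])\bigr), \qquad \alpha\in[0,\infty), \\
\inf_{y\in A}\tilde{I}(y) &= \inf_{x\in f^{-1}(A)}I(x), \qquad A\subseteq \mcal Y.
\end{align*}
The first identity will yield part (a); the second, together with the fact that a continuous $f$ pulls closed (resp.\ open) sets back to closed (resp.\ open) sets, will yield part (b).

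For part (a), the inclusion $f(I^{-1}([0,\alpha]))\subseteq \tilde{I}^{-1}([0,\alpha])$ is immediate from the definition of $\tilde{I}$. The nontrivial direction is to show that the infimum defining $\tilde{I}(y)$ is actually attained, so that $\tilde I(y)\le\alpha$ forces the existence of some preimage $x$ with $I(x)\le\alpha$. Given $y$ with $\tilde I(y)\le\alpha$, I would pick a minimizing sequence $x_k\in f^{-1}(\{y\})$ with $I(x_k)\to\tilde I(y)$; eventually $I(x_k)\le\alpha+1$, so $\{x_k\}$ lies in the compact level set $I^{-1}([0,\alpha+1])$ and admits a cluster point $x^{*}$. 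Continuity of $f$ gives $f(x^{*})=y$, and lower semicontinuity of $I$ gives $I(x^{*})\le\liminf_k I(x_k)=\tilde I(y)\le\alpha$. This proves the reverse inclusion. Consequently $\tilde I^{-1}([0,\alpha])$ is the continuous image of a compact set, hence compact and in particular closed, so $\tilde I$ is a good rate function on $\mcal Y$.

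For part (b), set $\tilde\mu_\epsilon := \mu_\epsilon\circ f^{-1}$, so that $\tilde\mu_\epsilon(A)=\mu_\epsilon(f^{-1}(A))$. Continuity of $f$ ensures that $f^{-1}(C)$ is closed in $\mcal X$ for closed $C\subseteq\mcal Y$, and analogously for open sets. Applying the LDP for $\{\mu_\epsilon\}$ to $f^{-1}(C)$ and to $f^{-1}(U)$ gives
\begin{align*}
\limsup_{\epsilon\to 0}\epsilon\ln\tilde\mu_\epsilon(C) &\le -\inf_{x\in f^{-1}(C)} I(x), \\
\liminf_{\epsilon\to 0}\epsilon\ln\tilde\mu_\epsilon(U) &\ge -\inf_{x\in f^{-1}(U)} I(x).
\end{align*}
It remains to verify the second identity above. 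For any $A\subseteq\mcal Y$, each $y\in A$ satisfies $\tilde I(y)=\inf_{x:f(x)=y}I(x)\ge\inf_{x\in f^{-1}(A)}I(x)$, so taking infimum over $y\in A$ yields $\inf_{y\in A}\tilde I(y)\ge\inf_{x\in f^{-1}(A)}I(x)$; conversely, each $x\in f^{-1}(A)$ satisfies $I(x)\ge\tilde I(f(x))\ge\inf_{y\in A}\tilde I(y)$, giving the reverse inequality. Substituting produces (LDP1) and (LDP2) for $\tilde\mu_\epsilon$ with rate function $\tilde I$.

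The only step requiring genuine content beyond unwinding definitions is the attainment argument in part (a); its key ingredient is precisely the hypothesis that $I$ is \emph{good}, i.e., that its level sets are compact rather than merely closed. Without this, the equality $\tilde I^{-1}([0,\alpha])=f(I^{-1}([0,\alpha]))$ can fail, and $\tilde I$ need not even be lower semicontinuous, let alone a good rate function.
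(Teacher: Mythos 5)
Your proof is correct and is essentially the standard argument for the contraction principle; the paper itself offers no proof but simply cites \cite[Theorem 4.2.1]{Dembo}, whose proof proceeds exactly along your lines (attainment of the infimum on compact level sets for goodness of $\tilde I$, plus the identity $\inf_{A}\tilde I=\inf_{f^{-1}(A)}I$ combined with continuity of $f$ for the two LDP bounds). Nothing further is needed.
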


In what follows, we will use the weak convergence method to 
give the Freidlin--Wentzell LDP of \eqref{SCH} and further
derive the one-point LDP of \eqref{SCH}. 
Our main assumptions on $\sigma$ and $u_0$ are as follows.

\begin{assum}\label{assum1}
	$\sigma:\mbb R\to\mbb R$ is bounded and globally Lipschitz continuous. 
\end{assum}
\begin{assum}\label{assum2}
	$u_0$ belongs to $\mbf C^1(\OO;\mbb R)$ and $u_0'(0)=u_0'(\pi)=0$.
\end{assum}
\begin{assum}\label{assum3}
	For any $x\in\mbb R$,	$\sigma(x)\neq 0$.
\end{assum}

The Green function related to $\partial_t+\Delta^2$ with the Neumann boundary condition  is  given by
$$G_t(x,y)=\sum_{j=0}^\infty e^{-\lambda_j^2t}\phi_j(x)\phi_j(y),\quad t\in[0,T],~ x,y\in\mathcal O,$$
where $\lambda_j=-j^2$, $\phi_j(x)=\sqrt{\frac{2}{\pi}}\cos(j x),$ $j\ge1$ and $\phi_0=\sqrt{\frac{1}{\pi}}$. 
Under Assumptions \ref{assum1}-\ref{assum2},
\eqref{SCH} admits a unique mild solution given by (see e.g., \cite[Theorem 1.3]{Cardon2001})
\begin{align*}
	u^{\varepsilon}(t,x)&=\int_\OO G_t(x,z)u_0(z)\ud z+\int_0^t\int_\OO \Delta G_{t-s}(x,z)b(u^{\varepsilon}(s,z))\ud z\ud s\\
	&\quad+\sqrt{\varepsilon}\int_0^t\int_\OO G_{t-s}(x,z)\sigma(u^{\varepsilon}(s,z))\ud W(s,z),\quad (t,x)\in\OO_T.
\end{align*}

The following proposition collects some basic properties of the Green function $G$.
\begin{pro}\label{Green}
	The following properties hold.	
	\begin{itemize} 
		\item [(1)] There is $C>0$ and $c>0$  such that for any $t\in(0,T]$,
		\begin{gather*}
			|G_t(x,y)|\le \frac{C}{t^{1/4}}\exp\Big(-c\frac{|x-y|^{4/3}}{t^{1/3}}\Big),\quad|\Delta G_t(x,y)|\le \frac{C}{t^{3/4}}\exp\Big(-c\frac{|x-y|^{4/3}}{t^{1/3}}\Big).
		\end{gather*}
	\end{itemize}
	\item [(2)] There is $C>0$ such that for any $v\in\mbf L^1(\OO_T;\mbb R)$ and $t\in[0,T]$,
	$$\Big\|\int_0^t\int_\OO \Delta G_{t-s}(\cdot,z)v(s,z)\ud z\ud s\Big\|_{\mbf L^\infty(\OO)}\le C\int_0^t(t-s)^{-3/4}\|v(s,\cdot)\|_{\mbf L^1(\OO)}\ud s.$$
	\item[(3)]  For any $\alpha\in(0,1)$,	there is $K(\alpha,T)>0$  such that for any $(t,x),(s,y)\in\OO_T$ with $t\ge s$,
	\begin{gather}
		\int_0^t\int_\OO|G_{t-r}(x,z)-G_{t-r}(y,z)|^2\ud z\ud r\le K(\alpha,T)|x-y|^2.\label{spaceHol}\\	 \int_0^s\int_\OO|G_{t-r}(x,z)-G_{s-r}(x,z)|^2\ud z\ud r+\int_s^t\int_\OO|G_{t-r}(x,z)|^2\ud z\ud r\le K(\alpha,T)|t-s|^{\frac{3\alpha}{4}}.\label{timeHol}
	\end{gather}
	\item[(4)]  For any $\alpha\in(0,1)$,	there is $K(\alpha,T)>0$  such that for any $(t,x),(s,y)\in\OO_T$ with $t\ge s$,
	\begin{gather}
		\int_0^t\int_\OO|\Delta G_{t-r}(x,z)-\Delta G_{t-r}(y,z)|\ud z\ud r\le K(\alpha,T)|x-y|.\label{spaceHol1}\\	 \int_0^s\int_\OO|\Delta G_{t-r}(x,z)-\Delta  G_{s-r}(x,z)|\ud z\ud r+\int_s^t\int_\OO|\Delta G_{t-r}(x,z)|\ud z\ud r\le K(\alpha,T)|t-s|^{\frac{3\alpha}{8}}.\label{timeHol1}
	\end{gather}
\end{pro}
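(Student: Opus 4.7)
Item (1) is the classical Gaussian-type bound for the biharmonic heat kernel under Neumann boundary conditions. My approach is to first derive such bounds for the free-space fundamental solution $K_t(x)$ of $\partial_t+\partial_x^4$ on $\mbb R$. Fourier inversion gives the self-similar form $K_t(x)=t^{-1/4}\Psi(x t^{-1/4})$, and a stationary-phase analysis of the oscillatory integral $\int e^{i\xi y-ty^4}\,\mrm d\xi$ yields the super-exponential decay $|\Psi(y)|\le C\exp(-c|y|^{4/3})$. For the Neumann problem on $[0,\pi]$, the reflection principle expresses $G_t(x,y)$ as a discrete sum of shifted and reflected copies of $K_t$, after which the whole-line bound transfers directly. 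The analogous estimate for $\Delta G_t$ comes from differentiating $\Psi$ twice: each spatial derivative contributes a factor of $t^{-1/4}$, yielding the prefactor $t^{-3/4}$. Item (2) is then an immediate consequence: by (1), $\sup_{x,z\in\OO}|\Delta G_{t-s}(x,z)|\le C(t-s)^{-3/4}$, so the $\mbf L^1$--$\mbf L^\infty$ pairing gives $|\int_\OO \Delta G_{t-s}(x,z)v(s,z)\,\mrm dz|\le C(t-s)^{-3/4}\|v(s,\cdot)\|_{\mbf L^1(\OO)}$, and integrating in $s$ concludes.

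For items (3) and (4), I would exploit the spectral representation and the $\mbf L^2(\OO)$-orthonormality of $\{\phi_j\}$. For the space-increment inequality \eqref{spaceHol}, orthogonality yields
\begin{equation*}
\int_0^t\!\int_\OO|G_{t-r}(x,z)-G_{t-r}(y,z)|^2\,\mrm dz\,\mrm dr=\sum_{j\ge 1}\frac{1-e^{-2j^4 t}}{2j^4}(\phi_j(x)-\phi_j(y))^2,
\end{equation*}
and then I would split the sum at $j\sim|x-y|^{-1}$, using $|\phi_j(x)-\phi_j(y)|\le C\min(1,j|x-y|)$ to bound the low-frequency contribution by $|x-y|^2\sum j^{-2}$ and the high-frequency tail by $\sum_{j>1/|x-y|}j^{-4}$. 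For the time-increment estimate \eqref{timeHol}, the central ingredient is the interpolation $(1-e^{-x})^2\le C_\alpha x^{2\alpha}$ for $\alpha\in(0,1)$ and $x\ge 0$, combined with $\int_s^t e^{-2j^4(t-r)}\,\mrm dr=(1-e^{-2j^4(t-s)})/(2j^4)$, which reduces the bound to a convergent series of the form $\sum_{j\ge 1}j^{4\alpha-4}$ (convergent precisely because $\alpha<3/4$ is allowed, but in the statement $\alpha<1$ is stated since the looser exponent $3\alpha/4$ is sufficient for later Kolmogorov-type arguments).

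For item (4), the same spectral strategy works after applying Cauchy--Schwarz $\int_\OO|f|\,\mrm dz\le\sqrt\pi(\int_\OO f^2\,\mrm dz)^{1/2}$ to go from $\mbf L^2$ to $\mbf L^1$, carrying the extra $j^4$ weight produced by $\Delta=\sum_j(-j^2)\phi_j(x)\phi_j(\cdot)$ into the summation. The spatial inequality \eqref{spaceHol1} is cleaner via the pointwise Gaussian bound on $\partial_x\Delta G_t$ (which has prefactor $t^{-1}$, obtained analogously to (1) with one more derivative): the fundamental theorem of calculus gives
\begin{equation*}
\int_\OO|\Delta G_t(x,z)-\Delta G_t(y,z)|\,\mrm dz\le\int_y^x\!\int_\OO|\PD_\xi\Delta G_t(\xi,z)|\,\mrm dz\,\mrm d\xi\le C|x-y|\,t^{-3/4},
\end{equation*}
and integrating in $r$ produces the claim. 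For the time piece of \eqref{timeHol1}, I would use the same interpolation as above with exponent $\beta=3\alpha/4$ to bound $\int_\OO|\Delta G_{t-r}-\Delta G_{s-r}|^2\,\mrm dz$ spectrally, then Cauchy--Schwarz and the explicit integration in $r$ on $[0,s]$, the condition $\beta<3/4$ (i.e.\ $\alpha<1$) ensuring convergence of both the $j$-series and the $r$-integral near the singularity.

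The hard part throughout is the interplay between the singular behaviour of $\Delta G_t$ and its derivatives at $t=0$ and the interpolation exponent $\alpha$: all the Hölder-type bounds are sharp only up to this trade-off, and this is precisely the reason the statement is restricted to $\alpha\in(0,1)$. Once this is understood, each individual estimate is a routine but slightly delicate spectral/Gaussian calculation; I anticipate no fundamentally new ideas beyond the two-sided bookkeeping of the exponent $\alpha$ and the appropriate application of reflection and the spectral representation.
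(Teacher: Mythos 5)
Your computations are essentially correct, but note that the paper does not prove Proposition \ref{Green} this way at all: its proof is by citation, quoting Lemmas 1.2 and 1.8 and Eq.\ (1.12) of the Cardon--Weber reference for items (1)--(3) and asserting that (4) follows as in estimates (2.18)--(2.21) of the reference [Sheng]. What you have written is therefore a self-contained reconstruction of the content of those references rather than a different theorem-level strategy, and the mechanisms you invoke are the right ones: the whole-line kernel $K_t(x)=t^{-1/4}\Psi(xt^{-1/4})$ with super-exponential decay of $\Psi$ and its derivatives, transferred to the Neumann problem by even periodic reflection (legitimate here because the eigenbasis is the cosine system), gives (1); the resulting sup-bound $|\Delta G_{t-s}(x,z)|\le C(t-s)^{-3/4}$ paired against the $\mbf L^1$-norm gives (2) immediately; the eigenfunction expansion together with $\int_0^te^{-2j^4(t-r)}\ud r=(1-e^{-2j^4t})/(2j^4)$, the dyadic split at $j\sim|x-y|^{-1}$ using $|\phi_j(x)-\phi_j(y)|\le C\min(1,j|x-y|)$, and the interpolation $(1-e^{-x})^2\le x^{2\gamma}$ give (3); and your hybrid treatment of (4) --- fundamental theorem of calculus with the $t^{-1}$ pointwise bound on $\partial_x\Delta G_t$ for \eqref{spaceHol1}, then Cauchy--Schwarz plus the spectral computation (yielding $(t-s)^{3\alpha/8}(s-r)^{-(5+3\alpha)/8}$, integrable in $r$ exactly when $\alpha<1$) for \eqref{timeHol1} --- closes correctly. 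Two minor bookkeeping remarks: in \eqref{timeHol}, with the statement's $\alpha$ the relevant series is $\sum_j j^{3\alpha-4}$, convergent for $\alpha<1$; your parenthetical ``$\sum_j j^{4\alpha-4}$, convergent for $\alpha<3/4$'' is the same condition expressed in terms of the time exponent $\beta=3\alpha/4$, so the mathematics agrees but the notation is mixed. Also, the $j=0$ mode in $\int_s^t\int_\OO|G_{t-r}(x,z)|^2\ud z\ud r$ contributes $(t-s)/\pi$ and must be absorbed into the $T$-dependent constant rather than into the series. Neither point affects the validity of your argument.
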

\begin{proof}
	The first  property and third one come from Lemmas 1.2 and 1.8 of \cite{Cardon2001}, respectively. The second  property follows by applying  \cite[Eq. (1.12)]{Cardon2001} with $q=r=+\infty$ and $\rho=1$. The proofs of \eqref{spaceHol1} and \eqref{timeHol1} are  similar to those of (2.18), (2.19) and  (2.21) in \cite{Sheng}.    
\end{proof}

For any $M\ge0$, denote
$\mbb S_M:=\big\{\phi\in\mbf L^2(\OO_T;\mbb R):~\|\phi\|_{\mbf L^2(\OO_T)}\le M\big\}$ and $\mscr A_M:=\big\{\phi:\Omega\times\OO_T\to\mbb R,~\phi~\text{is}~\text{predictable and}~\phi(\omega)\in \mbb S_M,~\mbf P\text{-}a.s.\big\}$. In the sequel, we always endow $\mbb S_M$ with the weak topology on  $\mbf L^2(\OO_T;\mbb R)$ under which  $\mbb S_M$ is a compact Polish space. In order to prove the LDP of $\{u^\varepsilon\}_{\varepsilon>0}$ on $\mbf C(\OO_T;\mbb R)$ based on the weak convergence method, it suffices to study the asymptotics of the
controlled equation of \eqref{SCH}:
\begin{align}\label{controll}
	&u^{\varepsilon,v}(t,x)=\int_\OO G_t(x,z)u_0(z)\ud z+\int_0^t\int_\OO \Delta G_{t-s}(x,z)b(u^{\varepsilon,v}(s,z))\ud z\ud s\nonumber\\
	&+\sqrt{\varepsilon}\int_0^t\int_\OO G_{t-s}(x,z)\sigma(u^{\varepsilon,v}(s,z))\ud W(s,z)+\int_0^t\int_\OO G_{t-s}(x,z)\sigma(u^{\varepsilon,v}(s,z))v(s,z)\ud z\ud s
\end{align}
for $(t,x)\in\OO_T$ and $v\in\mscr A_M$.

Next, we present some  properties of the controlled process $\{u^{\varepsilon,v}\}$.
\begin{pro}\label{pro1}
	Suppose that  Assumptions \ref{assum1}-\ref{assum2} hold. Then for any $M\in(0,+\infty)$ and $p\ge 1$,
	\begin{align*}
		\sup_{\varepsilon\in(0,1)}\sup_{v\in\mscr A_M}\sup_{(t,x)\in\OO_T}\mbf E|u^{\varepsilon,v}(t,x)|^p<+\infty.
	\end{align*}
\end{pro}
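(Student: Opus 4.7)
My plan is to split the controlled mild solution into
$u^{\varepsilon,v}=I_0+I_1+I_2+I_3$
where $I_0$, $I_1$, $I_2$, $I_3$ are the initial-condition, nonlinear-drift, stochastic, and control convolutions, respectively, and to bound the $p$-th moment of each uniformly in $\varepsilon\in(0,1)$, $v\in\mscr A_M$, and $(t,x)\in\OO_T$. For $I_0$, the Gaussian upper bound in Proposition \ref{Green}(1) yields $\int_\OO G_t(x,z)\,\ud z\le C$ uniformly in $(t,x)$, so $|I_0(t,x)|\le C\|u_0\|_{\mbf C(\OO)}$. For $I_2$, the Burkholder--Davis--Gundy inequality combined with Assumption \ref{assum1} reduces the estimate to $\int_0^t\int_\OO G_{t-s}(x,z)^2\,\ud z\,\ud s$, which is uniformly bounded by the Gaussian estimate in Proposition \ref{Green}(1). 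For $I_3$, Cauchy--Schwarz together with $\|v\|_{\mbf L^2(\OO_T)}\le M$ and the boundedness of $\sigma$ gives $|I_3(t,x)|\le K(M)$.

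The genuine obstacle is the nonlinear term $I_1$, whose kernel $\Delta G_{t-s}(x,z)\,b(u^{\varepsilon,v}(s,z))$ contains the cubic $u^3$. A naive Minkowski-plus-Gronwall attempt on $\sup_x\mbf E|u^{\varepsilon,v}(t,x)|^p$ is self-defeating, since the right-hand side involves moments of order $3p$ and the inequality does not close. To bypass this, I would first establish a uniform spatial moment bound
$$
\sup_{\varepsilon\in(0,1)}\sup_{v\in\mscr A_M}\sup_{t\in[0,T]}\mbf E\|u^{\varepsilon,v}(t,\cdot)\|_{\mbf L^q(\OO)}^q<\infty
$$
for a sufficiently large $q\ge 4$, via an energy estimate. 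Applying It\^o's formula (after a Galerkin regularization justifying the computation under space-time white noise) to $\|u^{\varepsilon,v}(t)\|_{\mbf L^2(\OO)}^2$, the dissipation $-\|\Delta u\|^2$ from the biharmonic operator combines with the identity $-\LL\nabla u,\nabla b(u)\RR=-\int(3u^2-1)|\nabla u|^2\,\ud x\le \|\nabla u\|^2$ (exploiting $b'(u)\ge -1$) and Young's inequality $\|\nabla u\|^2\le \delta\|\Delta u\|^2+C_\delta\|u\|^2$ to close the Gronwall estimate. The control contribution is absorbed via $\|\sigma\|_\infty$ and $\|v\|_{\mbf L^2(\OO_T)}\le M$, while the It\^o correction involves only $\varepsilon\le 1$ and $\|\sigma\|_\infty$. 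Iterating the argument to higher $\mbf L^{2m}$ norms, using $\int u^{2m-2}\cdot u\Delta b(u)$ and integration by parts to retain the dissipation $-\int u^{2m-2}b'(u)|\nabla u|^2\,\ud x$, upgrades the bound to arbitrary $q$.

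Armed with this spatial moment bound, I would bootstrap to a pointwise bound on $I_1$ via Property (2) of Proposition \ref{Green}:
$$
\|I_1(t,\cdot)\|_{\mbf L^\infty(\OO)}\le C\int_0^t(t-s)^{-3/4}\|b(u^{\varepsilon,v}(s,\cdot))\|_{\mbf L^1(\OO)}\,\ud s,
$$
combined with $\|b(u)\|_{\mbf L^1(\OO)}\le C(1+\|u\|_{\mbf L^3(\OO)}^3)$, H\"older's inequality in $s$ (against the integrable singularity $(t-s)^{-3/4}$), and the uniform $\mbf L^q$ bound just obtained. Adding the four pieces yields the claim. The hardest technical step is the uniform spatial $\mbf L^q$-moment bound: the non-Lipschitz cubic drift forces one to extract dissipation from the variational form rather than work purely in the mild formulation, and the uniformity in both $\varepsilon\in(0,1)$ and $v\in\mscr A_M$ must be tracked carefully through all terms arising in the It\^o expansion.
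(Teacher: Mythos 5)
Your outer decomposition and the bootstrap for the cubic term are exactly the paper's argument: the paper also treats the initial, stochastic and control convolutions directly, and handles the drift term by combining Proposition \ref{Green} (2), $|b(x)|\le K(1+|x|^3)$ and the Minkowski inequality with a uniform spatial moment bound $\sup_{\varepsilon,v,t}\mbf E\|u^{\varepsilon,v}(t,\cdot)\|^q_{\mbf L^p(\OO)}<+\infty$. The difference is that the paper simply quotes this spatial bound from \cite[Proposition 3.2]{LDPofCH}, whereas you propose to re-derive it by an It\^o energy estimate, and that is precisely where your argument has a genuine gap.

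Applying It\^o's formula to $\|u^{\varepsilon,v}(t)\|_{\mbf L^2(\OO)}^2$ (or to $\|u\|_{\mbf L^{2m}(\OO)}^{2m}$) under space-time white noise produces the correction term $\varepsilon\int_0^t\sum_{k}\int_\OO\sigma(u^{\varepsilon,v}(s,x))^2\phi_k(x)^2\,\ud x\,\ud s$, i.e.\ the squared Hilbert--Schmidt norm of multiplication by $\sigma(u)$ on $\mbf L^2(\OO)$; since the noise covariance is the identity (not trace class) and $\sum_k\phi_k(x)^2=+\infty$, this correction is infinite whenever $\sigma\not\equiv0$ --- it is emphatically not controlled by ``$\varepsilon\le1$ and $\|\sigma\|_\infty$'' alone. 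Galerkin regularization does not repair this: at level $N$ the correction is of size $\varepsilon\|\sigma\|_\infty^2 N$, it does not depend on the solution, so the biharmonic dissipation cannot absorb it, and the resulting estimate degenerates as $N\to+\infty$. Hence the key step of your plan, the uniform spatial $\mbf L^q$ moment bound, is not obtained by the route you describe. The standard repair (and what underlies the quoted \cite[Proposition 3.2]{LDPofCH}, following \cite{Cardon2001}) is to peel off the rough part first: the stochastic and control convolutions have uniformly bounded moments in $\mbf C(\OO_T)$ (essentially your $I_2$, $I_3$ estimates; cf.\ Proposition \ref{pro2}), and the remainder solves a deterministic fourth-order equation with rough forcing, for which pathwise $\mbf L^p$ estimates exploiting the dissipativity of $b$ close without any It\^o correction --- or one simply cites the known bound, as the paper does. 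With that bound in hand, the rest of your proof coincides with the paper's.
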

\begin{proof}
	It has been shown in \cite[Proposition 3.2]{LDPofCH} that for any $p\ge 1$ and $q\ge p$,
	$$\sup_{\varepsilon\in(0,1)}\sup_{v\in\mscr A_M}\sup_{t\in[0,T]}\mbf E\|u^{\varepsilon,v}(t,\cdot)\|^q_{\mbf L^p(\OO)}<+\infty.$$
	Let $\varepsilon\in(0,1)$ and $v\in\mscr A_M$.	It follows from the above formula, Proposition \ref{Green} (2), $|b(x)|\le K(1+|x|^3)$, $x\in\mbb R$ and the Minkowski inequality that for any $p\ge 1$ and $t\in[0,T]$,
	\begin{align*}
		&\;	\sup_{x\in\OO}\mbf E\,\Big|\int_0^t\int_\OO\Delta G_{t-s}(x,z)b(u^{\varepsilon,v}(s,z))\ud z\ud s\Big|^p	
		\le \mbf E\,\Big\|\int_0^t\int_\OO\Delta G_{t-s}(\cdot,z)b(u^{\varepsilon,v}(s,z))\ud z\ud s\Big\|_{\mbf L^\infty(\OO)}^p\\
		\le &\; K\mbf E\,\Big|\int_0^t(t-s)^{-3/4}\|b(u^{\varepsilon,v}(s,\cdot))\|_{\mbf L^1(\OO)}\ud s\Big|^p
		\le  K\Big|\int_0^t(t-s)^{-3/4}\big(\mbf E\|b(u^{\varepsilon,v}(s,\cdot))\|_{\mbf L^1(\OO)}^p\big)^{1/p}\ud s\Big|^p\\
		\le &\; K(p)\Big|\int_0^t(t-s)^{-3/4}\big(1+\mbf E\|u^{\varepsilon,v}(s,\cdot)\|_{\mbf L^3(\OO)}^{3p}\big)^{1/p}\ud s\Big|^p\le K(p,M,T).
	\end{align*}
	By the Burkholder inequality, the boundedness of $\sigma$ and Proposition \ref{Green} (1),  it holds  that for any $p\ge 2$ and $(t,x)\in\OO_T$,
	\begin{align*}
		&\mbf E\big|\int_0^t\int_\OO G_{t-s}(x,z)\sigma(u^{\varepsilon,v}(s,z)\ud W(s,z)\big|^p
		\le  K(p)\mbf E\Big(\int_0^t\int_\OO|G_{t-s}(x,z)\sigma(u^{\varepsilon,v}(s,z)|^2\ud z\ud s\Big)^{p/2}\\
		\le &K(p)\Big(\int_0^t\int_\OO|G_{t-s}(x,z)|^2\ud z\ud s\Big)^{p/2}\le K(p,T).
	\end{align*}
	The boundedness of $\sigma$, $v\in\mscr A_M$, the H\"older inequality and Proposition \ref{Green} (1) give that for $p\ge 2$,
	\begin{align*}
		\mbf E\Big|\int_0^t\int_\OO G_{t-s}(x,z)\sigma(u^{\varepsilon,v}(s,z))v(s,z)\ud z\ud s\Big|^p\le K(p)\mbf E\Big(\int_0^t\int_\OO |G_{t-s}(x,z)|^2\ud z\ud s\|v\|_{\mbf L^2(\OO_T)}^2\Big)^{p/2}\le K(p,M,T).
	\end{align*}
	Further, applying \cite[Lemma 2.2]{Cardon2001} yields
	\begin{align*}
		\sup_{(t,x)\in\OO_T}\Big|\int_\OO G_{t}(x,z)u_0(z)\ud z\Big|\le  K(T).
	\end{align*}
	Finally, by the H\"older inequality and \eqref{controll}, we have  that for any $p\ge 2$, $\varepsilon\in(0,1)$, $v\in\mscr A_M$ and $(t,x)\in\OO_T$,
	\begin{align*}
		\mbf E|u^{\varepsilon,v}(t,x)|^p\le K(p,M,T).
	\end{align*}
	The above formula and the H\"older inequality immediately yield the conclusion for the case $p\in[1,2)$. 
\end{proof}

\begin{pro}\label{pro2}
	Let $\mcal B\subseteq \big\{\phi:\Omega\times \OO_T\to\mbb R:\phi~\text{is predictable and}~\|\phi\|_{\mbf L^2(\OO_T)}<+\infty,~\mbf P\text{-}a.s.\big\}$ be a family such that for all $p\ge 2$,
	$		\sup\limits_{f\in\mcal B}\sup\limits_{(t,x)\in\OO_T}\mbf E|f(t,x)|^p<+\infty.$
	For any $f\in\mcal B$ and $v\in\mscr A_M$, $M\in(0,+\infty)$, define 
	\begin{gather*}
		\Phi_1(t,x):=\int_0^t\int_\OO \Delta G_{t-r}(x,z)f(r,z)\ud z\ud r,\\
		\Phi_2(t,x):=\int_0^t\int_\OO G_{t-r}(x,z)f(r,z)\ud W(r,z),\\ \Phi_3(t,x):=\int_0^t\int_\OO G_{t-r}(x,z)f(r,z)v(r,z)\ud z\ud r
	\end{gather*}
	for	$(t,x)\in\OO_T$. Then for $\alpha\in(0,\frac{3}{8})$ and $i=1,2,3$, 
	$\sup\limits_{f\in\mcal B,v\in\mscr A_M}\mbf E\|\Phi_i\|_{\mbf C^{\alpha}(\OO_T)}<+\infty.$
\end{pro}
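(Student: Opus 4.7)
The plan is to invoke Kolmogorov's continuity criterion on the two-dimensional index set $\OO_T$. For this, it suffices to establish, for each $i \in \{1,2,3\}$, a uniform moment estimate
\begin{equation*}
 \mbf E |\Phi_i(t,x) - \Phi_i(s,y)|^p \le K(p, M, T, \alpha_0) \bigl( |x-y|^p + |t-s|^{3 \alpha_0 p/8} \bigr)
\end{equation*}
for arbitrary $\alpha_0 \in (0,1)$ and all sufficiently large $p \ge 2$, together with the pointwise bound $\sup_{(t,x) \in \OO_T} \mbf E |\Phi_i(t,x)|^p < +\infty$, both uniformly in $f \in \mcal B$ and $v \in \mscr A_M$. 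Since the right-hand side dominates $|z-z'|^{3\alpha_0 p /8}$ in a neighbourhood of the diagonal of $\OO_T$, Kolmogorov's theorem provides a continuous modification whose joint $\mbf C^\beta(\OO_T)$ norm has a finite $p$-th moment for every $\beta < 3\alpha_0/8 - 2/p$; letting $\alpha_0 \uparrow 1$ and $p \to \infty$ then covers every $\alpha \in (0, 3/8)$.

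For $\Phi_1$, the strategy is Minkowski's integral inequality paired with the $L^1$ estimates from Proposition \ref{Green}(4). After the decomposition $\Phi_1(t,x) - \Phi_1(s,y) = [\Phi_1(t,x) - \Phi_1(t,y)] + [\Phi_1(t,y) - \Phi_1(s,y)]$ and splitting the time increment at $r = s$, the $L^p(\Omega)$ norm can be pulled inside the space-time integral to give
\begin{equation*}
 \|\Phi_1(t,x) - \Phi_1(s,y)\|_{L^p(\Omega)} \le \sup_{(r,z) \in \OO_T} \|f(r,z)\|_{L^p(\Omega)} \cdot \bigl( I_{\mrm{sp}} + I_{\mrm{t},1} + I_{\mrm{t},2} \bigr),
\end{equation*}
where $I_{\mrm{sp}}$ is the $L^1$ integral of $|\Delta G_{t-r}(x,z) - \Delta G_{t-r}(y,z)|$ on $[0,t] \times \OO$, $I_{\mrm{t},1}$ is the analogous $L^1$ integral of $|\Delta G_{t-r}(y,z) - \Delta G_{s-r}(y,z)|$ on $[0,s]\times\OO$, and $I_{\mrm{t},2}$ is that of $|\Delta G_{t-r}(y,z)|$ on $[s,t] \times \OO$. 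Estimates \eqref{spaceHol1}--\eqref{timeHol1} bound these by $K(\alpha_0,T) |x-y|$ and $K(\alpha_0,T)|t-s|^{3\alpha_0/8}$, respectively, so the hypothesis on $\mcal B$ finishes the $\Phi_1$ case after raising to the $p$-th power.

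For $\Phi_2$, set $H_{(s,y)}^{(t,x)}(r,z) := G_{t-r}(x,z) - G_{s-r}(y,z)\,\mbf{1}_{r \le s}$; the Burkholder--Davis--Gundy inequality followed by Minkowski's inequality in $L^{p/2}$ (valid because $p \ge 2$) yields
\begin{equation*}
 \mbf E |\Phi_2(t,x) - \Phi_2(s,y)|^p \le K_p \sup_{(r,z) \in \OO_T}\|f(r,z)\|_{L^p(\Omega)}^p \Bigl( \int_0^t \int_\OO |H_{(s,y)}^{(t,x)}(r,z)|^2 \ud z \ud r \Bigr)^{p/2},
\end{equation*}
and the remaining double integral is controlled by $K(\alpha_0,T)(|x-y|^2 + |t-s|^{3\alpha_0/4})$ via \eqref{spaceHol} and \eqref{timeHol}. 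For $\Phi_3$, the Cauchy--Schwarz inequality with respect to $v$ yields
\begin{equation*}
 |\Phi_3(t,x) - \Phi_3(s,y)|^2 \le M^2 \int_0^t \int_\OO |H_{(s,y)}^{(t,x)}(r,z)|^2 |f(r,z)|^2 \ud z \ud r,
\end{equation*}
after which the argument is identical to that for $\Phi_2$.

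The main obstacle is the singularity of the kernel $\Delta G$ appearing in $\Phi_1$: because $|\Delta G_t(x,y)|$ blows up like $t^{-3/4}$, the $L^2(\OO_T)$-based isometry exploited for $\Phi_2$ and $\Phi_3$ simply fails (the corresponding double integral of $|\Delta G|^2$ diverges near $r = t$). The remedy is to trade that isometry for Minkowski's integral inequality combined with the $L^1$ bounds \eqref{spaceHol1}--\eqref{timeHol1}; this trade-off is also what pins down the terminal joint H\"older exponent at $3/8$, inherited from the time regularity of the $L^1(\OO_T)$ norm of $\Delta G$.
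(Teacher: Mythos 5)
Your proposal is correct and follows essentially the same route as the paper: $p$-th moment bounds on space--time increments via H\"older/Minkowski (plus Burkholder for $\Phi_2$) combined with the Green-function estimates of Proposition \ref{Green}, then the Kolmogorov continuity theorem on the two-dimensional parameter set $\OO_T$ with $\alpha_0\uparrow 1$ and $p\to\infty$, which is exactly how the paper treats $\Phi_3$. The only difference is that you make the $i=1$ case explicit through the $L^1$ bounds \eqref{spaceHol1}--\eqref{timeHol1}, correctly observing that the $L^2$-based route fails for the singular kernel $\Delta G$, whereas the paper dismisses $i=1,2$ as ``almost the same''; your treatment is thus, if anything, more careful on that point.
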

\begin{proof}
	We only prove the conclusion for $i=3$, the case $i=1,2$ is almost same by an  additional application of the Burkholder inequality. 
	It follows from the H\"older inequality, the Minkowski inequality, Proposition \ref{Green} (3) and $\|v\|_{\mbf L^2(\OO_T)}\le M$, $\mbf P$-a.s.\ that for any $s\le t\le T$, $x,y\in\OO$, $p\ge 1$ and $\alpha\in(0,1)$,
	\begin{align*}
		&\;\mbf E|\Phi_3(t,x)-\Phi_3(s,y)|^{2p}\\
		\le &\; K(p)\mbf E|\Phi_3(t,x)-\Phi_3(t,y)|^{2p}+K(p)\mbf E|\Phi_3(t,y)-\Phi_3(s,y)|^{2p}\\
		\le &\;K(p,M)\Bigg[\mbf E\Big(\int_0^t\int_\OO|G_{t-r}(x,z)-G_{t-r}(y,z)|^2|f(r,z)|^2\ud z\ud r\Big)^p+\mbf E\Big(\int_s^t\int_\OO|G_{t-r}(y,z)|^2|f(r,z)|^2\ud z\ud r\Big)^p\\
		&\;+\mbf E\Big(\int_0^s\int_\OO|G_{t-r}(y,z)-G_{s-r}(y,z)|^2|f(r,z)|^2\ud z\ud r\Big)^p\Bigg]\\
		\le&\; K(p,M)\Bigg[\Big(\int_0^t\int_\OO |G_{t-r}(x,z)-G_{t-r}(y,z)|^2(\mbf E|f(r,z)|^{2p})^{\frac{1}{p}}\ud z\ud r\Big)^p\\
		&\;+\Big(\int_s^t\int_\OO |G_{t-r}(y,z)|^2(\mbf E|f(r,z)|^{2p})^{\frac{1}{p}}\ud z\ud r\Big)^p+\Big(\int_0^s\int_\OO |G_{t-r}(x,z)-G_{s-r}(x,z)|^2(\mbf E|f(r,z)|^{2p})^{\frac{1}{p}}\ud z\ud r\Big)^p\Bigg]\\
		\le&\; K(\alpha,p,M)\big(|t-s|^{\frac{3\alpha p}{4}}+|x-y|^{2p}\big).
	\end{align*}
	Thus, we obtain that for any $p\ge 2$, $(t,x),(s,y)\in\OO_T$ and $\alpha\in(0,\frac{3}{8})$,
	\begin{align*}
		\sup_{f\in\mcal B,v\in\mscr A_M} (\mbf E|\Phi_3(t,x)-\Phi_3(s,y)|^p)^{1/p}\le K(\alpha,p,M)\big(|t-s|^{\alpha}+|x-y|^{\alpha}\big).
	\end{align*}
	Further, using  the Kolmogorov continuity theorem \cite[Theorem C.6]{KD14} yields that for any  $\alpha\in(0,\frac{3}{8})$, $p>\frac{2}{\alpha}$ and $q\in(0,1-\frac{2}{\alpha p})$,
	\begin{align}\label{kolmogorov}
		\sup_{f\in\mcal B,v\in\mscr A_M}\mbf E\Bigg[\sup_{\underset{(t,x)\neq (s,y)}{(t,x),(s,y)\in\OO_T}
		}\Big|\frac{\Phi_3(t,x)-\Phi_3(s,y)}{\rho((t,x),(s,y))^{\alpha q}}\Big|^p\Bigg]<+\infty,
	\end{align}
	where $\rho((t,x),(s,y))=(|t-s|^2+|x-y|^2)^{1/2}$.
	For any $\eta\in(0,\frac{3}{8})$, choose $\alpha\in(\eta,\frac{3}{8})$. Then taking sufficiently large $p$ and $q=\frac{\eta}{\alpha}$ in \eqref{kolmogorov}, we have
	$\sup_{f\in\mcal B,v\in\mscr A_M}\mbf E[\Phi_3]_{\mbf C^{\eta}(\OO_T)}<+\infty$ for any $\eta\in(0,\frac{3}{8})$.
	Again by \eqref{kolmogorov} and $\Phi_3(0,0)=0$, it holds that $\sup\limits_{f\in\mcal B,v\in\mscr A_M}\mbf E\|\Phi_3\|_{\mbf C(\OO_T)}<+\infty$. In this way, we complete the proof. 
\end{proof}

\begin{pro}\label{pro3}
	Let $Z^\varepsilon_f(t,x):=\sqrt{\varepsilon}\int_0^t\int_0^1 G_{t-s}(x,y)f(s,y)\ud W(s,y)$, $(t,x)\in\OO_T$ and $\mcal B$  be as in Proposition \ref{pro2}. Then for every family $\{f^\varepsilon\}_{\varepsilon>0}\subseteq \mcal B$, $Z^\varepsilon_{f^{\varepsilon}}$ converges to $0$ in $\mbf C(\OO_T;\mbb R)$ in probability as $\varepsilon\to 0$. 
\end{pro}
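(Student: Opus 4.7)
The plan is to reduce the proposition to a direct application of Proposition \ref{pro2}. Observe that $Z^\varepsilon_{f^\varepsilon}(t,x) = \sqrt{\varepsilon}\,\Phi_2(t,x)$, where $\Phi_2$ is the stochastic convolution term defined in Proposition \ref{pro2} with integrand $f = f^\varepsilon \in \mcal B$. Since $\Phi_2$ does not involve the control $v$, we may apply Proposition \ref{pro2} (taking, e.g., $v \equiv 0 \in \mscr A_M$) to conclude that for any fixed $\alpha \in (0, 3/8)$, there exists a constant $K = K(\alpha, T)$ such that
\[
\sup_{f \in \mcal B} \mbf E\|\Phi_2\|_{\mbf C^\alpha(\OO_T)} \le K.
\]

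Using $\|\cdot\|_{\mbf C(\OO_T)} \le \|\cdot\|_{\mbf C^\alpha(\OO_T)}$, I would then immediately obtain
\[
\mbf E\|Z^\varepsilon_{f^\varepsilon}\|_{\mbf C(\OO_T)} = \sqrt{\varepsilon}\,\mbf E\|\Phi_2\|_{\mbf C(\OO_T)} \le K\sqrt{\varepsilon} \longrightarrow 0
\]
as $\varepsilon \to 0$. Convergence in probability is then a one-line consequence of Markov's inequality: for any $\delta > 0$,
\[
\mbf P\big(\|Z^\varepsilon_{f^\varepsilon}\|_{\mbf C(\OO_T)} > \delta\big) \le \frac{\mbf E\|Z^\varepsilon_{f^\varepsilon}\|_{\mbf C(\OO_T)}}{\delta} \le \frac{K\sqrt{\varepsilon}}{\delta} \longrightarrow 0.
\]

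There is essentially no real obstacle here, since Proposition \ref{pro2} already encapsulates all of the nontrivial work (the Burkholder inequality, the Green-function bounds from Proposition \ref{Green}, and Kolmogorov's continuity theorem). The only point worth checking carefully is that the bound on $\mbf E\|\Phi_2\|_{\mbf C^\alpha(\OO_T)}$ from Proposition \ref{pro2} is uniform over $f \in \mcal B$ independently of $\varepsilon$; this holds because $\mcal B$ is defined without reference to $\varepsilon$, and in the proof of Proposition \ref{pro2} every appearance of $f$ is controlled through the moment bound $\sup_{f\in\mcal B}\sup_{(t,x)\in\OO_T}\mbf E|f(t,x)|^p<+\infty$, which is the defining property of $\mcal B$.
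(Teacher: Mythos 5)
Your argument is correct and is essentially the paper's own proof: the paper likewise bounds $\mbf E\|Z^\varepsilon_{f^\varepsilon}\|_{\mbf C(\OO_T)}\le \sqrt{\varepsilon}\sup_{f\in\mcal B}\mbf E\|\Phi_2\|_{\mbf C(\OO_T)}\le K\sqrt{\varepsilon}$ via Proposition \ref{pro2} and concludes convergence in probability. Your explicit Markov-inequality step and the remark that $\Phi_2$ does not depend on $v$ are just spelled-out versions of what the paper leaves implicit.
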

\begin{proof}
	By Proposition \ref{pro2}, $\mbf E\|Z^\varepsilon_{f^\varepsilon}\|_{\mbf C(\OO_T)}\le \sqrt{\varepsilon}\sup_{f\in\mcal B}\mbf E\|\Phi_2\|_{\mbf C(\OO_T)}\le K\sqrt{\varepsilon}$, which yields the desired result. 
\end{proof}

\begin{theo}\label{LDPofu}
	Let Assumptions \ref{assum1}-\ref{assum2} hold. Then $\{u^\varepsilon\}_{\varepsilon>0}$ satisfies the LDP  on $\mbf C(\OO_T;\mbb R)$ with the good rate function given by
	\begin{gather}\label{J}
		J(f):=\inf_{\{h\in \mbf L^2(\OO_T;\mbb R):~ \Upsilon(h)=f\}}\frac12\|h\|^2_{\mbf L^2(\OO_T)},\quad f\in\mbf C(\OO_T;\mbb R),
	\end{gather}
	where $\Upsilon$ is the solution mapping which takes  $h\in\mbf L^2(\OO_T;\mbb R)$ to the solution of the following skeleton equation
	\begin{align}\label{eq:Skeleton}
		f(t,x)=&\;\int_\OO G_t(x,z)u_0(z)\ud z+\int_0^t\int_\OO \Delta G_{t-s}(x,z)b(f(s,z))\ud z\ud s +\int_0^t\int_\OO G_{t-s}(x,z)\sigma(f(s,z))h(s,z)\ud z\ud s.
	\end{align}
	In addition, for any $\bar x\in\OO$,	$\{u^{\varepsilon}(T,\bar x)\}_{\varepsilon>0}$ satisfies the LDP on $\mbb R$ with the good rate function $I$ given by
	\begin{align*}
		I(y):=\inf_{\{f\in \mbf C(\OO_T;\mbb R):~f(T,\bar x)=y\}}J(f),\quad y\in\R.
	\end{align*}
\end{theo}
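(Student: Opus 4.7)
The plan is to first establish the path-space Freidlin--Wentzell LDP on $\mbf C(\OO_T;\mbb R)$ via the Budhiraja--Dupuis weak convergence method, and then deduce the one-point LDP through the contraction principle (Proposition~\ref{contraction}). The Budhiraja--Dupuis criterion reduces the problem to two claims: \emph{(a)} the level set $\Upsilon(\mbb S_M)\subseteq\mbf C(\OO_T;\mbb R)$ is compact for every $M<+\infty$; \emph{(b)} for any $\{v^\varepsilon\}_{\varepsilon>0}\subseteq\mscr A_M$ converging in distribution to $v$ as $\mbb S_M$-valued random elements, the controlled solutions $u^{\varepsilon,v^\varepsilon}$ converge in distribution to $\Upsilon(v)$ in $\mbf C(\OO_T;\mbb R)$.

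First, I would prove well-posedness of the skeleton equation \eqref{eq:Skeleton}: given $h\in\mbf L^2(\OO_T;\mbb R)$, a Picard iteration in $\mbf C(\OO_T;\mbb R)$ using the cubic growth of $b$ and Proposition~\ref{Green}~(2) to control $\int\!\int\Delta G_{t-s}b(f)\,\ud z\,\ud s$ in $\mbf L^\infty(\OO)$, together with the boundedness of $\sigma$ and Cauchy--Schwarz on the $h$-integral, yields a unique solution $\Upsilon(h)$, and the same estimates deliver $\sup_{h\in\mbb S_M}\|\Upsilon(h)\|_{\mbf C(\OO_T)}<+\infty$. For (a), take $h_n\rightharpoonup h$ weakly in $\mbb S_M$, set $f_n:=\Upsilon(h_n)$, and combine the deterministic analogue of Proposition~\ref{pro2} with the uniform sup-norm bound to obtain equicontinuity of $\{f_n\}$; Arzel\`a--Ascoli then produces a strong $\mbf C(\OO_T;\mbb R)$-subsequential limit $f$. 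Passing to the limit in \eqref{eq:Skeleton}, dominated convergence (using the uniform bound and the cubic growth of $b$) identifies the drift term, while in the control term the strong $\mbf L^2(\OO_T)$-convergence of $\sigma(f_n)$ (from Lipschitz $\sigma$) paired with the weak convergence $h_n\rightharpoonup h$ identifies $\int\!\int G_{t-s}\sigma(f)h\,\ud z\,\ud s$; uniqueness then gives $f=\Upsilon(h)$, hence continuity of $\Upsilon$ and compactness of $\Upsilon(\mbb S_M)$.

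For (b), Propositions~\ref{pro1}--\ref{pro2} applied to each term of \eqref{controll} yield tightness of $\{u^{\varepsilon,v^\varepsilon}\}$ in $\mbf C(\OO_T;\mbb R)$ through uniform H\"older moment bounds. A Skorokhod representation allows a pathwise argument on an auxiliary probability space: Proposition~\ref{pro3} eliminates the $\sqrt{\varepsilon}$ stochastic integral, and the remaining deterministic limit is identified by the same weak/strong argument used in (a), invoking the continuity of $\Upsilon$ established there at the limit control $v$. Finally, the evaluation map $\mathrm{ev}_{T,\bar x}\colon f\mapsto f(T,\bar x)$ from $\mbf C(\OO_T;\mbb R)$ to $\mbb R$ is continuous, so the contraction principle (Proposition~\ref{contraction}) directly transfers the path-space LDP to an LDP for $\{u^\varepsilon(T,\bar x)\}_{\varepsilon>0}$ with the good rate function $I(y)=\inf\{J(f):f\in\mbf C(\OO_T;\mbb R),\ f(T,\bar x)=y\}$, matching the statement.

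The main obstacle is passing to the limit in the non-Lipschitz cubic drift $b(u)=u^3-u$ with only \emph{weak} convergence of the controls: a direct Gronwall argument on $u^{\varepsilon,v^\varepsilon}-\Upsilon(v)$ is blocked by the $u^3$ term. The remedy is to first obtain strong sup-norm compactness via the joint H\"older-in-space-and-time estimates of Proposition~\ref{pro2} (themselves issuing from Proposition~\ref{Green}~(3) through the Kolmogorov continuity theorem); this H\"older gain is precisely what enables reaching the stronger space $\mbf C(\OO_T;\mbb R)$, improving over the sample-path LDPs in \cite{LDPCH09,LDPofCH}. Only after this compactness is in hand does termwise limit passage, powered by continuity of $b,\sigma$ together with the uniform moment bounds of Proposition~\ref{pro1}, succeed.
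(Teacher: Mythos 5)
Your proposal follows essentially the same route as the paper: the Budhiraja--Dupuis weak convergence method (the paper phrases the sufficient condition in the unified form of \cite[Theorems 7, 12]{Dupuis08}, i.e., convergence in distribution of $u^{\theta(\varepsilon),v^{\varepsilon}}$ to $u^{0,v}$, which subsumes your conditions (a) and (b)), supported by Propositions \ref{pro1}--\ref{pro3}, followed by the contraction principle via the evaluation map at $(T,\bar x)$. The two formulations of the criterion are equivalent, so your argument matches the paper's proof in substance.
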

\begin{proof}
	This proof is similar to  that of \cite[Theorem 9]{Dupuis08} and we only give the sketch of the proof.
	Let $\theta:[0,1)\to[0,1)$ be a measurable map such that $\lim\limits_{r\to 0}\theta(r)=\theta(0)=0$.
	Then based on Propositions \ref{pro1}-\ref{pro3}, 
	one can use the same procedure as in the proof of \cite[Theorem 12]{Dupuis08} to prove that for any family $\{v^{\varepsilon}\}\subseteq \mscr A_M$    converging to some $v\in \mscr A_M$ in distribution as $\varepsilon\to 0$, $u^{\theta(\varepsilon),v^{\varepsilon}}$, $\varepsilon>0$ as $\mbf C(\OO_T;\mbb R)$-valued random variables converge to $u^{0,v}$ in distribution (see \eqref{controll} for the definition of $u^{\varepsilon,v}$). Further, applying the above results and \cite[Theorem 7]{Dupuis08}, we conclude that $\{u^{\varepsilon}\}_{\varepsilon>0}$ satisfies the LDP on $\mbf C(\OO_T;\mbb R)$ with the good rate function $J$ given by \eqref{J}. For fixed  $\bar x\in\OO$, define the coordinate map $\xi_{(T,\bar x)}: \mbf C(\OO_T;\mbb R)\rightarrow \R$ by $\xi_{(T,\bar x)}(f)=f(T,\bar x)$ for $f\in\mbf C(\OO_T;\mbb R)$. It follows from the continuity of $\xi_{(T,\bar x)}$ and the contraction principle (Proposition \ref{contraction}) that $\{u^{\varepsilon}(T,\bar x)\}_{\varepsilon>0}$ satisfies an LDP with the good rate function $I$. 
\end{proof}

\section{One-point LDP for finite difference method}\label{Sec3}
In this section, we derive the one-point LDP of the spatial FDM for \eqref{SCH}. We begin with the construction of the spatial FDM. We note that \cite{Sheng1} studies the density convergence of the spatial FDM for stochastic Cahn--Hilliard equations with the Dirichlet boundary condition.

Let $h=\frac{\pi}{n}$ be the uniform space mesh size for a positive integer $n$. Given a function $w$ defined on $\{x_1,\ldots,x_n\}$ with $x_k=\frac{2k-1}{2n}\pi$, $k=1,\ldots,n$, 
we define the difference operator $$\delta_n w_i:=\frac{w_{i-1}-2w_i+w_{i+1}}{h^2},~i=1,\ldots,n,$$
where $w_i:=w(x_i)$. Then one approximates $u^{\varepsilon}(t,x_k)$  by $\{u^{\varepsilon,n}(t,x_k)\}_{n\ge1}$:
\begin{align*}
	\begin{cases}
		\quad\ud u^{\varepsilon,n}(t,x_k)+ \delta_n^2u^{\varepsilon,n}(t,x_k)\ud t\\
		= \delta_n b(u^{\varepsilon,n}(t,x_k))\ud t+\frac{n}{\pi}\sqrt{\varepsilon}\sigma(u^{\varepsilon,n}(t,x_k))\ud(W(t,kh)-W(t,(k-1)h)),~ t\in(0,T],\\
		u^{\varepsilon,n}(0,x_k)=u_0(x_k)	\end{cases}
\end{align*}
for $k=1,2,\ldots,n$, under the boundary conditions $u^{\varepsilon,n}(t,x_{1})=u^{\varepsilon,n}(t,x_0),$ $u^{\varepsilon,n}(t,x_{n+1})=u^{\varepsilon,n}(t,x_n)$, 
$u^{\varepsilon,n}(t,x_{-1})=u^{\varepsilon,n}(t,x_2)$, and $u^{\varepsilon,n}(t,x_{n+2})=u^{\varepsilon,n}(t,x_{n-1})$.
For $x\in[x_k,x_{k+1}]$ with $k\in\{1,\ldots,n-1\}$, we use the polygonal interpolation
$$u^{\varepsilon,n}(t,x):=u^{\varepsilon,n}(t,x_k)+\frac{n}{\pi}(x-x_k)\left(u^{\varepsilon,n}(t,x_{k+1})-u^{\varepsilon,n}(t,x_k)\right),\quad k\in\{1,\ldots,n-1\},$$
and set $u^{\varepsilon,n}(t,x)=u^{\varepsilon,n}(t,x_1)$ if $x\in[0,x_1]$ and $u^{\varepsilon,n}(t,x)=u^{\varepsilon,n}(t,x_n)$ if  $x\in[x_n,\pi]$.

Introduce the notation
$U^{\varepsilon,n}_k(t):=u^{\varepsilon,n}(t,x_k)$ and $W^n_k(t):=\sqrt{\frac{n}{\pi}}\left(W(t,kh)-W(t,(k-1)h)\right)$, for $k=1,\ldots,n$,
and denote $U^{\varepsilon,n}(t)=(U^{\varepsilon,n}_1(t),\ldots,U^{\varepsilon,n}_{n}(t))^\top$ and $W^n(t)=(W^n_1(t),\ldots,W^n_{n}(t))^{\top}$.
Let \begin{align*}
	A_n:=\frac{n^2}{\pi^2}\left[\begin{array}{cccccc}-1 & 1 & 0 & \cdots & \cdots & 0 \\1 & -2 & 1 & \ddots & \ddots & \vdots \\0 & 1 & -2 & 1 & \ddots & \vdots \\\vdots & 0 & \ddots & \ddots & \ddots & 0 \\\vdots & \ddots & \ddots & 1 & -2 & 1 \\0 & \cdots & \cdots & 0 & 1 & -1\end{array}\right],
\end{align*}
which is a symmetric and nonpositive-definite matrix. For $j\in\{0,\ldots,n-1\}$, $e_j=(e_j(1),\ldots,e_j(n))^\top$ given by
\begin{equation}\label{ejk}
	e_j(k)
	%\sqrt\frac{2}{n}\cos\left(j\frac{(2k-1)\pi}{2n}\right)
	=\sqrt{\frac{\pi}{n}}\phi_j(x_k),\quad k=1,\ldots,n,
\end{equation} 
is an eigenvector of $ A_n$ with the associated  eigenvalue $\lambda_{j,n}=-j^2c_{j,n}$, where $\frac{4}{\pi^2}\le c_{j,n}:=\sin^2(\frac{j}{2n}\pi)/(\frac{j}{2n}\pi)^2\le 1.$  
We also remark that $\{e_0,\ldots,e_{n-1}\}$ also forms an orthonormal basis of $\R^{n}$, i.e.,
\begin{align}\label{orth0}
	\langle e_i,e_j\rangle_{\mathbb R^n}=\frac{\pi}{n}\sum_{k=1}^n\phi_i(x_k)\phi_j(x_k)=\int_\OO\phi_i(\kappa_n(x))\phi_j(\kappa_n(x))\ud x=\delta_{ij},
\end{align} where $\kappa_n(x):=(2[\frac{xn}{\pi}]+1)\frac{\pi}{2n}$.

Then we obtain an $n$-dimensional SDE 
\begin{align}\label{NCH}
	\ud U^{\varepsilon,n}(t)+ A_n^2U^{\varepsilon,n}(t)\ud t=  A_nB_n(U^{\varepsilon,n}(t))\ud t+\sqrt{\frac{\varepsilon n}{\pi}}\Sigma_n(U^{\varepsilon,n}(t))\ud W^n(t),~t\in(0,T]
\end{align}
with the initial condition $U^{n}(0)=(u_0(x_1),\ldots,u_0(x_n))^\top$, where  $B_n(a):=(b(a_1),\ldots,b(a_n))^\top$ and $\Sigma_n(a):=\textrm{diag}(\sigma(a_1),\ldots,\sigma(a_n))$ for any $a=(a_1,\ldots,a_n)\in\mbb R^n$.
It is easy to see that for $x,y\in\R$,
\begin{align}\label{fbfa}
	(b(y)-b(x))(x-y)&\le(x-y)^2,\\\label{fb-fa}
	|b(x)-b(y)|&\le c_0(1+|x|^2+|y|^2)|x-y|. 
\end{align}   

The following result gives the well-posedness of \eqref{NCH}.
\begin{pro}
	Let  Assumptions \ref{assum1}-\ref{assum2} hold. Then for any given $n\ge 1$, $\varepsilon\in(0,1]$, the SDE \eqref{NCH} admits a unique solution $\{U^{\varepsilon,n}(t),t\in[0,T]\}$, which is an $a.s.$ continuous $\mathscr F_t$-adapted process such that $\mathbb P$-a.s. for all $t\in[0,T]$,
	\begin{align*}
		U^{\varepsilon,n}(t)=U^n(0)+\int_0^t\left(- A_n^2U^{\varepsilon,n}(s)+ A_nB_n(U^{\varepsilon,n}(s))\right)\ud s+\sqrt{\frac{\varepsilon n}{\pi}}\int_0^t\Sigma_n(U^{\varepsilon,n}(s))\ud W^n(s).
	\end{align*} 
\end{pro}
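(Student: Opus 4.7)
The plan is the standard two-step argument: local existence by local Lipschitz regularity, then non-explosion via a Lyapunov estimate exploiting the dissipative structure of $b$. Since $b(x)=x^3-x$ is polynomial, the drift $F(u):=-A_n^2 u+A_nB_n(u)$ is componentwise polynomial of degree three in $u\in\mbb R^n$, hence locally Lipschitz on $\mbb R^n$; the diffusion $u\mapsto\sqrt{\varepsilon n/\pi}\,\Sigma_n(u)$ is globally Lipschitz and bounded by Assumption \ref{assum1}. Standard finite-dimensional SDE theory therefore delivers a unique maximal strong solution $\{U^{\varepsilon,n}(t):t\in[0,\tau)\}$ with explosion time $\tau:=\lim_{R\to\infty}\tau_R$, where $\tau_R:=\inf\{t\ge 0:|U^{\varepsilon,n}(t)|\ge R\}$; the proof reduces to showing $\tau=+\infty$ $\mbf P$-a.s.

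For non-explosion I would apply It\^o's formula to $V(u)=\tfrac12|u|^2$. Using that $A_n$ is symmetric one obtains
\begin{align*}
\mcal L V(u) = -|A_n u|^2 + \langle A_n u, B_n(u)\rangle + \frac{\varepsilon n}{2\pi}\sum_{k=1}^n\sigma(u_k)^2,
\end{align*}
and the stochastic term is bounded by $\frac{\varepsilon n^2}{2\pi}\|\sigma\|_{\mbf L^\infty(\mbb R)}^2$ in view of Assumption \ref{assum1}. The key input is a discrete dissipativity estimate: discrete summation by parts --- valid because the boundary rows of $A_n$ encode Neumann-type conditions and $A_n$ is symmetric --- yields $\langle A_n u, v\rangle = -\frac{1}{h^2}\sum_{k=1}^{n-1}(u_{k+1}-u_k)(v_{k+1}-v_k)$. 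Taking $v=B_n(u)$ and applying the one-sided Lipschitz bound \eqref{fbfa} componentwise gives
\begin{align*}
\langle A_n u, B_n(u)\rangle = -\frac{1}{h^2}\sum_{k=1}^{n-1}(u_{k+1}-u_k)(b(u_{k+1})-b(u_k)) \le \frac{1}{h^2}\sum_{k=1}^{n-1}(u_{k+1}-u_k)^2 = -\langle A_n u, u\rangle,
\end{align*}
and Cauchy--Schwarz plus Young then gives $-\langle A_n u,u\rangle\le\tfrac12|A_n u|^2+\tfrac12|u|^2$. Combined with the $-|A_n u|^2$ contribution, one arrives at the linear-growth bound $\mcal L V(u)\le V(u)+C(n,\varepsilon)$.

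The proof then closes as follows: after localization by $\tau_R$ and taking expectations, Gr\"onwall's inequality yields $\mbf E|U^{\varepsilon,n}(t\wedge\tau_R)|^2\le K(n,\varepsilon,T)$ uniformly in $R$, so that $\mbf P(\tau_R\le T)\le K(n,\varepsilon,T)/R^2\to 0$ as $R\to\infty$. Hence $\tau=+\infty$ $\mbf P$-a.s., the local solution extends uniquely to $[0,T]$, and the stated integral representation, $a.s.$ continuity, and $\mscr F_t$-adaptedness are all inherited from the local construction. The main obstacle I anticipate is justifying the discrete summation-by-parts identity in exactly the form required; because of the non-standard first and last rows of $A_n$, this is a short but careful calculation, and it is the place where the Neumann-type boundary treatment built into the definition of $A_n$ enters essentially in ruling out boundary contributions.
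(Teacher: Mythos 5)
Your proposal is correct, but it takes a different route from the paper. The paper does not run a Lyapunov/non-explosion argument by hand: it verifies the two hypotheses of the abstract finite-dimensional well-posedness theorem \cite[Theorem 3.1.1]{PR07}, namely local weak monotonicity $\langle x-y, A_nB_n(x)-A_nB_n(y)\rangle\le K_n(R)|x-y|^2$ (proved componentwise from \eqref{fbfa}--\eqref{fb-fa} and Young's inequality) and weak coercivity $\langle x,A_nB_n(x)\rangle\le K_n(1+|x|^2)$, the latter resting on the sign condition $\langle x,A_n(x_1^3,\ldots,x_n^3)^\top\rangle\le 0$, which the paper checks directly via $a^3b\le\frac34a^4+\frac14b^4$ and then bounds $-\langle x,A_nx\rangle\le(n-1)^2|x|^2$ by the operator norm of $A_n$. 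You instead unpack what that citation delivers: local Lipschitz continuity of the polynomial drift gives a unique maximal solution, and your It\^o--Gr\"onwall estimate on $\frac12|u|^2$ rules out explosion. Your key dissipativity input, the summation-by-parts identity $\langle A_nu,v\rangle=-\frac{1}{h^2}\sum_{k=1}^{n-1}(u_{k+1}-u_k)(v_{k+1}-v_k)$ (which does hold for this Neumann-type $A_n$; the boundary rows contribute no extra terms) combined with \eqref{fbfa}, is essentially equivalent to the paper's sign condition, since $\langle A_nu,B_n(u)\rangle=\langle A_nu,(u_1^3,\ldots,u_n^3)^\top\rangle-\langle A_nu,u\rangle$ and the cubic part is dissipative by monotonicity of $x\mapsto x^3$. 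The trade-off: the paper's argument is shorter once the abstract theorem is invoked and avoids localization bookkeeping, while yours is self-contained at the level of classical SDE theory and yields the moment bound $\mbf E|U^{\varepsilon,n}(t\wedge\tau_R)|^2\le K(n,\varepsilon,T)$ as a byproduct; both constants are allowed to depend on $n$ and $\varepsilon$, so neither approach is sharper for the purposes of this proposition.
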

%%%引理 2.1的证明
\begin{proof}  
	In view of \cite[Theorem 3.1.1]{PR07} and Assumption \ref{assum1}, it suffices to show that $\R^{n}\ni x\mapsto  A_nF_n(x)\in\R^{n}$ satisfies that for all $R\in(0,\infty)$, $x,y\in\R^{n}$ with $|x| ,|y| \le R$,
	\begin{align}\label{lwm}
		\langle x-y, A_nB_n(x)- A_nB_n(y)\rangle \le K_n(R)|x-y| ^2, \quad\text{(local weak monotonicity)}
	\end{align}
	and 
	\begin{align}\label{wc}
		\langle x, A_nB_n(x)\rangle \le K_n(1+|x| ^2), \quad\text{(weak coercivity)}
	\end{align}
	for some  constants $K_n(R)$ and $K_n$.   By definition, 
	\begin{align*}
		&\quad\frac{\pi^2}{n^2}\langle x-y, A_nB_n(x)- A_nB_n(y)\rangle \\
		&=\sum_{i=2}^{n-1}(b(x_{i+1})-b(y_{i+1})+2b(y_i)-2b(x_i)+b(x_{i-1})-b(y_{i-1}))(x_i-y_i)\\
		&\quad+(b(x_{2})-b(y_{2})+b(y_1)-b(x_1))(x_1-y_1)+(b(y_{n})-b(x_{n})+b(x_{n-1})-b(y_{n-1}))(x_{n}-y_{n}).
	\end{align*}
	Applying the Young inequality, and using \eqref{fbfa} and \eqref{fb-fa} give that for $x,y\in\R^{n}$ with $|x| ,|y| \le R$,
	\begin{align*}
		&\quad\frac{\pi^2}{n^2}\langle x-y, A_nB_n(x)- A_nB_n(y)\rangle \\
		\le&2\sum_{i=2}^{n-1}|x_i-y_i|^2+\frac{1}{2}\sum_{i=2}^{n-1}\left(|b(x_{i+1})-b(y_{i+1})|^2+|b(x_{i-1})-b(y_{i-1})|^2+2|x_i-y_i|^2\right)\\
		&+\frac{1}{2}\left(|b(x_{2})-b(y_{2})|^2+3|x_1-y_1|^2+|b(x_{n-1})-b(y_{n-1})|^2+3|x_{n}-y_{n}|^2\right)\\
		\le&3|x-y| ^2+\sum_{i=1}^{n}|b(x_{i})-b(y_{i})|^2
		\le3|x-y| ^2+c_0^2\sum_{i=1}^{n}(1+|x_i|^2+|y_i|^2)^2|x_{i}-y_{i}|^2\\
		\le &\left(c_0^2(2R^2+1)^2+3\right)|x-y| ^2,
	\end{align*}
	which proves the local weak monotonicity \eqref{lwm} with $K_n(R)=\left(c_0^2(2R^2+1)^2+3\right)\frac{n^2}{\pi^2}.$ Similarly, by the elementary relation $a^3b\le \frac{3}{4}a^4+\frac{1}{4}b^4$, one can also verify $\langle x, A_n(x_1^3,\ldots,x_n^3)^\top\rangle \le 0$. Hence,
	\begin{align}\label{xBf}
		&\langle x, A_nB_n(x)\rangle 
		=\langle x, A_n(x_1^3,\ldots,x_n^3)^\top\rangle -\langle x, A_nx\rangle 
		\le -\langle x, A_nx\rangle  .
	\end{align}
	Since $- A_n$ is a symmetric matrix, we have $|- A_n|=\max_{0\le j\le n-1}(-\lambda_{j,n})\le(n-1)^2$.
	By further using $ab\le\frac{1}{2}a^2+\frac{1}{2}b^2$ for $a,b\in\R,$ we obtain 
	$\langle x, A_nB_n(x)\rangle \le-\langle x, A_nx\rangle \le (n-1)^2|x|^2$,
	i.e., the weak coercivity condition \eqref{wc} with $K_n=(n-1)^2$. Thus, the proof is complete.  
\end{proof}

Next, we give the equation satisfied by $u^{\varepsilon,n}$. First, we introduce the interpolation operator $\Pi_n$. For any measurable function $w:\OO\to\mbb R$, the function $\Pi_n(w)$ is given by
\begin{align*}
	\Pi_n(w)(x)=
	\begin{cases}
		w(x_1)\qquad&\text{if}~x\in[0,x_1],\\
		w(x_k)+\frac{n}{\pi}(w(x_{k+1})-w(x_k))(x-x_k)\qquad&\text{if}~x\in[x_k,x_{k+1}],~k=1,2,\ldots,n-1,\\
		w(x_n)\qquad&\text{if}~x\in[x_n,\pi].
	\end{cases} 
\end{align*}
Then let us introduce the discrete Green function
\begin{align*}
	G^n_t(x,y)=\sum_{j=0}^{n-1}\exp(-\lambda_{j,n}^2t)\phi_{j,n}(x)\phi_j(\kappa_n(y)), ~t\in[0,T],~x,y\in\OO,
\end{align*}
where $\phi_{j,n}=\Pi_n(\phi_j)$. Further, define the discrete Neumann Laplacian $\Delta_n$ by 
\begin{align*}
	(\Delta_n w)(x)= 
	\begin{cases}
		\frac{n^2}{\pi^2}\big(w(x_2)-w(x_1)\big)\qquad&\text{if}~x\in[0,\frac{\pi}{n}),
		\vspace{2mm}
		\\
		\frac{n^2}{\pi^2}\big[w(\kappa_n(x)+\frac{\pi}{n})-2w(\kappa_n(x))+w(\kappa_n(x)-\frac{\pi}{n})\big]\qquad&\text{if}~x\in[\frac{\pi}{n},\frac{(n-1)\pi}{n}),
		\vspace{2mm}
		\\
		\frac{n^2}{\pi^2}\big(w(x_{n-1})-w(x_n)\big)\qquad&\text{if}~x\in[\frac{(n-1)\pi}{n},\pi]
	\end{cases}
\end{align*}
for any measurable function $w:\OO\rightarrow \R$. Then one immediately has $\Delta_n w(x)=\Delta _n w(\kappa_n(x))$, $x\in\OO$. In addition, a direct computation leads to 
\begin{align}\label{Dnphi}
	\Delta_n \varphi_j(\kappa_n(y))=\lambda_{j,n}\varphi_j(\kappa_n(y))
\end{align}
for $y\in\OO$, $j=1,\ldots,n-1$, which produces
\begin{align*}
	\Delta_{n,y}G^n_{t}(x,y)=\sum_{j=0}^{n-1}\lambda_{j,n}\exp(-\lambda_{j,n}^2t)\phi_{j,n}(x)\phi_j(\kappa_n(y)),
\end{align*}
where  $\Delta_{n,y}$ means that the operator $\Delta_n$ is posed w.r.t.\ the variable  $y$.

Using the variation of constant formula, we derive from \eqref{NCH} that
\begin{align}\label{Unt}
	U^{\varepsilon,n}(t)&=e^{- A_n^2t}U^n(0)+\int_0^t A_ne^{-A_n^2(t-s)}B_n(U^{\varepsilon,n}(s))\ud s+\sqrt{\frac{\varepsilon n}{\pi}}\int_0^te^{-A_n^2(t-s)}\Sigma_n(U^{\varepsilon,n}(s))\ud W^n(s).
\end{align}
Similar to \cite{GI98}, based on \eqref{Unt}, 
the relation $u^{\varepsilon,n}(t,x_k)=\sum_{j=0}^{n-1}\langle U^{\varepsilon,n}(t),e_j\rangle e_j(k)$, $k=1,\ldots,n$ and $A_ne_j=\lambda_{j,n}e_j$, $j=0,1,\ldots,n-1$, one  has 
\begin{align}\label{untx}\notag
	u^{\varepsilon,n}(t,x)=&\int_{\mathcal O}G^n_t(x,y)u_0(\kappa_n(y))\ud y+\int_0^t\int_{\mathcal O} \Delta_{n,y}G^n_{t-s}(x,y)b(u^{\varepsilon,n}(s,\kappa_n(y)))\ud y\ud s\\
	&+\sqrt{\varepsilon}\int_0^t\int_{\mathcal O}G^n_{t-s}(x,y)\sigma(u^{\varepsilon,n}(s,\kappa_n(y)))\ud W(s,y).
\end{align}

Next, we use the Freidlin--Wentzell LDP of \eqref{Unt} and the contraction principle (Proposition \ref{contraction}) to establish the one-point LDP of \eqref{untx}.
\begin{theo}\label{LDPofun}
	Let Assumptions \ref{assum1}-\ref{assum2} hold.	Then for any $\bar x\in\OO$ and $n\in\mbb N^+$,	$\{u^{\varepsilon,n}(T,\bar x)\}_{\varepsilon>0}$ satisfies the LDP on $\mbb R$ with the good rate function $I^n$ given by
	\begin{align*}
		I^n(y):=\inf_{\{f\in \mbf C(\OO_T;\mbb R):~f(T,\bar x)=y\}}J^n(f),\quad y\in\R.
	\end{align*}
	Here $J^n:\mbf C(\OO_T;\mbb R)\to \mbb R$ is defined by
	$$J^n(f):=\inf_{\{h\in \mbf L^2(\OO_T;\mbb R):~ \Upsilon^n(h)=f\}}\frac12\|h\|^2_{\mbf L^2(\OO_T)},\quad f\in\mbf C(\OO_T;\mbb R),$$
	where $\Upsilon^n$ is the solution mapping which takes  $h\in\mbf L^2(\OO_T;\mbb R)$ to the solution of the discrete skeleton equation 
	\begin{align}\label{DisSkeleton}
		f(t,x)
		=&\;\int_\OO G^n_t(x,y)u_0(\kappa_n(y))\ud y
		+\int_0^t\int_\OO \Delta_{n,y}G^n_{t-s}(x,y)b(f(s,\kappa_n(y)))\ud y\ud s\\ 
		&\;+\int_0^t\int_\OO G^n_{t-s}(x,y)\sigma(f(s,\kappa_n(y)))h(s,y)\ud y\ud s.\nonumber
	\end{align}
\end{theo}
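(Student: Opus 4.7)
The plan is to follow the same two-step strategy used in the proof of Theorem~\ref{LDPofu}: first establish the sample-path Freidlin--Wentzell LDP for $\{u^{\varepsilon,n}\}_{\varepsilon>0}$ on $\mbf C(\OO_T;\mbb R)$ with good rate function $J^n$, and then apply the contraction principle (Proposition~\ref{contraction}) with the continuous coordinate map $\xi_{(T,\bar x)}:f\mapsto f(T,\bar x)$ to descend to the one-point LDP on $\mbb R$. Since $u^{\varepsilon,n}$ is the spatial polygonal interpolation of the finite-dimensional process $U^{\varepsilon,n}$, the problem is effectively finite-dimensional for each fixed $n$; however, I choose to run the weak convergence argument directly on the mild formulation \eqref{untx} so that the rate function appears in the form of \eqref{DisSkeleton} automatically.

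For the sample-path LDP, the weak convergence method requires analysis of the controlled equation associated with \eqref{untx}: for $v\in\mscr A_M$,
\begin{align*}
u^{\varepsilon,n,v}(t,x)
&= \int_\OO G^n_t(x,y)u_0(\kappa_n(y))\,\ud y+ \int_0^t\!\int_\OO \Delta_{n,y}G^n_{t-s}(x,y) b\big(u^{\varepsilon,n,v}(s,\kappa_n(y))\big)\,\ud y\,\ud s\\
&\quad + \sqrt{\varepsilon}\int_0^t\!\int_\OO G^n_{t-s}(x,y)\sigma\big(u^{\varepsilon,n,v}(s,\kappa_n(y))\big)\,\ud W(s,y) \\
&\quad + \int_0^t\!\int_\OO G^n_{t-s}(x,y)\sigma\big(u^{\varepsilon,n,v}(s,\kappa_n(y))\big)v(s,y)\,\ud y\,\ud s.
\end{align*}
I would first establish uniform moment bounds in the spirit of Proposition~\ref{pro1}: for each fixed $n$, the weak coercivity \eqref{wc} combined with the boundedness of $\sigma$ and $\|v\|_{\mbf L^2(\OO_T)}\le M$ yields an a priori bound on $|U^{\varepsilon,n,v}|$ via It\^o's formula, which transfers to a uniform $\mbf L^\infty(\OO_T)$ bound on $u^{\varepsilon,n,v}$ through polygonal interpolation. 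Uniform spatio-temporal H\"older regularity then follows from Kolmogorov-type estimates on the three integral terms as in Proposition~\ref{pro2}, and $\sqrt{\varepsilon}$-smallness of the stochastic integral follows as in Proposition~\ref{pro3}. These ingredients verify the two sufficient conditions of \cite[Theorem 7]{Dupuis08}: well-posedness of the skeleton map $\Upsilon^n:\mbf L^2(\OO_T;\mbb R)\to\mbf C(\OO_T;\mbb R)$, and the distributional convergence $u^{\theta(\varepsilon),n,v^\varepsilon}\to \Upsilon^n(v)$ in $\mbf C(\OO_T;\mbb R)$ whenever $v^\varepsilon\to v$ in distribution in $\mbb S_M$. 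This yields the sample-path LDP with rate function $J^n$, after which the contraction principle applied to $\xi_{(T,\bar x)}$ completes the proof.

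The main obstacle is the distributional identification of the limit $u^{\theta(\varepsilon),n,v^\varepsilon}\to \Upsilon^n(v)$ in the presence of the cubic drift $b$. My plan is to first use the Kolmogorov estimates to establish tightness of $\{u^{\varepsilon,n,v^\varepsilon}\}_{\varepsilon>0}$ in $\mbf C(\OO_T;\mbb R)$, and then pass to the limit term by term along a Skorokhod representation: the stochastic integral vanishes in probability thanks to the $\sqrt{\theta(\varepsilon)}$ prefactor, while continuity of $b$ on the uniformly bounded sample paths allows the drift and control terms to pass to the limit. This step is considerably cleaner than in the SPDE setting of Theorem~\ref{LDPofu}, because for fixed $n$ the ambient space is the finite-dimensional subspace spanned by $\{e_0,\ldots,e_{n-1}\}$, and a uniform $\mbf L^\infty$ bound on $u^{\varepsilon,n,v^\varepsilon}$ follows directly from the $\mbb R^n$-bound on $U^{\varepsilon,n,v^\varepsilon}$ furnished by the weak coercivity \eqref{wc}.
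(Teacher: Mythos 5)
Your route is viable but genuinely different from the paper's. The paper never reruns the weak convergence method for the scheme: it treats \eqref{NCH} as a finite-dimensional SODE, invokes an existing Freidlin--Wentzell LDP for SODEs with locally monotone coefficients (using exactly the local weak monotonicity \eqref{lwm} and weak coercivity \eqref{wc} already verified for the well-posedness), transfers the sample-path LDP to $\mcal P_n(\OO_T;\mbb R)$ through the continuous interpolation map $\tau$ and the contraction principle, and then does purely deterministic work: the explicit computations \eqref{vecf1}--\eqref{vecf3} identify the finite-dimensional skeleton equation $\mcal A(q)=\vec f$ with $\Upsilon^n(\theta(q))=f$ via the isometry $\theta$ onto piecewise-constant-in-space controls $\mcal N_n(\OO_T;\mbb R)$, and a final cell-averaging argument (\eqref{htilde}, with H\"older/Jensen) shows that enlarging the control class from $\mcal N_n(\OO_T;\mbb R)$ to all of $\mbf L^2(\OO_T;\mbb R)$ does not change the infimum, which yields the stated form of $J^n$. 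Your plan instead repeats the Budhiraja--Dupuis analysis of Theorem \ref{LDPofu} at the level of the mild formulation \eqref{untx} with the discrete kernel $G^n$; this buys you the rate function directly in the $\mbf L^2(\OO_T;\mbb R)$ form (so you never need the paper's reduction step \eqref{JJn}), but at the cost of re-establishing, for each fixed $n$, the analogues of Propositions \ref{pro1}--\ref{pro3} (moment bounds for the controlled process with cubic drift, H\"older/tightness estimates via Proposition \ref{disGreen}, smallness of the stochastic integral) and the limit identification. Two points in your sketch deserve sharper statements: It\^o's formula with \eqref{wc} gives uniform \emph{moment} bounds on $U^{\varepsilon,n,v}$, not a pathwise $\mbf L^\infty(\OO_T)$ bound uniform in $(\varepsilon,v)$ --- moments suffice for tightness, and pathwise boundedness along a.s.\ convergent Skorokhod subsequences suffices for passing to the limit in $b$; and since $v^\varepsilon\to v$ only in the weak topology of $\mbb S_M$, the control term must be handled by the standard strong-times-weak pairing argument (uniform convergence of $\sigma(u^{\varepsilon,n,v^\varepsilon})$ against weak $\mbf L^2$ convergence of $v^\varepsilon$), not by mere continuity of the integrand. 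With those details supplied, your argument closes, and the final contraction-principle step coincides with the paper's.
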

\begin{proof}
	For given $n\in\mbb N^+$,  $\{U^{\varepsilon,n}\}_{\varepsilon>0}$ is viewed as a family of $\mbf C([0,T];\mbb R^n)$-valued random variables. Since \eqref{lwm} and \eqref{wc}
	hold, the coefficients of the SODE \eqref{NCH} satisfy the conditions of  \cite[Theorem 1.2]{LDPnonLip}. Thus, for any given $n\in\mbb N^+$, $\{U^{\varepsilon,n}\}_{\varepsilon>0}$ satisfies the LDP on $\mbf C([0,T];\mbb R^n)$ with the good rate function 
	\begin{align*}
		\mbb K^n(p):=\inf_{\{q\in \mbf L^2(0,T;\mbb R^n):~ \mcal A(q)=p\}}\frac{1}{2}\int_0^T|q(t)|^2\ud t,\quad p\in\mbf C([0,T];\mbb R^n),
	\end{align*}
	where for any $q\in \mbf L^2(0,T;\mbb R^n)$, $\mcal A(q)$ solves the equation
	\begin{align*}
		p(t)=U^n(0)-\int_0^tA_n^2p(s)\ud s+\int_0^tA_nB_n(p(s))\ud s+\sqrt{\frac{n}{\pi}}\int_0^t\Sigma_n(p(s))q(s)\ud s,\quad t\in[0,T].
	\end{align*}
	Denote $\mcal P_n(\OO_T;\mbb R):=\big\{w\in\mbf C(\OO_T;\mbb R):~w(t,\cdot)=\Pi_n(w(t,\cdot)),~t\in[0,T]\big\}$, endowed with the norm of $\mbf C(\OO_T;\mbb R)$, and define the operator $\tau:\mbf C([0,T];\mbb R^n)\to \mcal P_n(\OO_T;\mbb R)$ as follows: for any $a(\cdot)=(a_1(\cdot),a_2(\cdot),\ldots,a_n(\cdot))^\top\in\mbf C([0,T];\mbb R^n)$, 
	\begin{align*}
		\tau(a)(t,x)=\begin{cases}
			a_1(t)\qquad&\text{if}~x\in[0,x_1],\\
			a_k(t)+\frac{n}{\pi}(a_{k+1}(t)-a_k(t))(x-x_k)\qquad&\text{if}~x\in[x_k,x_{k+1}],~k=1,2,\ldots,n-1,\\
			a_n(t)\qquad&\text{if}~x\in[x_n,\pi],
		\end{cases}
	\end{align*}
	for any $t\in[0,T]$.
	Then one immediately has $u^{\varepsilon,n}=\tau(U^{\varepsilon,n})$. It follows from the continuity of $\tau$ and  Proposition \ref{contraction} that for any $n\in\mbb N^+$, $\{u^{\varepsilon,n}\}_{\varepsilon>0}$ satisfies the LDP in $\mcal P_n(\OO_T;\mbb R)$ with the good rate function
	\begin{align*}
		\tilde{J}^n(f)=\inf_{\{p\in\mbf C([0,T];\mbb R^n):~\tau(p)=f\}}\mbb K^n(p)=\mbb K^n(\tau^{-1}(f)),~f\in\mcal P_n(\OO_T;\mbb R).
	\end{align*}
	Further, for any $f\in\mcal P_n(\OO_T;\mbb R)$, we have $\tau^{-1}(f)=\vec{f}$ with $\vec{f}(t):=\big(f(t,x_1),f(t,x_2),\ldots,f(t,x_n)\big)^\top$. Thus,
	\begin{align*}
		\tilde{J}^n(f)=\mbb K^n(\vec{f})=\inf_{\{q\in \mbf L^2(0,T;\mbb R^n):~ \mcal A(q)=\vec{f}\}}\frac{1}{2}\int_0^T|q(t)|^2\ud t,~f\in\mcal P_n(\OO_T;\mbb R).
	\end{align*}

	Define the operator $\theta:\mbf L^2(0,T;\mbb R^n)\to \mcal N_n(\OO_T;\mbb R):=\big\{h\in\mbf L^2(\OO_T;\mbb R):~h(t,x)=h(t,\kappa_n(x)),~(t,x)\in[0,T]\big\}$ which maps $q(\cdot)=(q_1(\cdot),\ldots,q_n(\cdot))^\top\in\mbf L^2(0,T;\mbb R^n)$ as
	\begin{align}\label{theta}
		\theta(q)(t,x):=\sqrt{\frac{n}{\pi}}q_k(t),~\text{if}~x\in\big[\frac{k-1}{n}\pi,\frac{k}{n}\pi\big),~k=1,2,\ldots,n.
	\end{align}
	Next, we proves $\mcal A(q)=\vec{f}$ if and only if $\Upsilon^n(\theta(q))=f$.
	In fact,  $\mcal A(q)=\vec{f}$ if and only if 
	\begin{align}\label{vecf}
		\vec{f}(t)&=e^{-A_n^2t}U^n(0)+\int_0^t A_ne^{-A_n^2(t-s)}B_n(\vec{f}(s))\ud s+\sqrt{\frac{n}{\pi}}\int_0^te^{-A_n^2(t-s)}\Sigma_n(\vec{f}(s))q(s)\ud s, ~t\in[0,T].
	\end{align}
	Since $\{e_j\}_{j=0}^{n-1}$ forms the orthonormal basis of $\mbb R^n$, from \eqref{ejk}, $A_ne_j=\lambda_{j,n}e_j$ and the definition of $G^n_t(x,y)$, we deduce 
	\begin{align}\label{vecf1}
		\big(e^{-A_n^2t}U^n(0)\big)_k&=\sum_{j=0}^{n-1}e^{-\lambda_{j,n}^2t}\langle U^n(0),e_j\rangle e_j(k)=\sum_{j=0}^{n-1}e^{-\lambda_{j,n}^2t}\sum_{l=1}^n U^n_l(0)e_j(l) e_j(k)\nonumber\\
		&=\frac{\pi}{n}\sum_{l=1}^n\sum_{j=0}^{n-1}e^{-\lambda_{j,n}^2t}\phi_j(x_k)\phi_j(x_l)u_0(x_l)=\frac{\pi}{n}\sum_{l=1}^n G^n_t(x_k,x_l)u_0(x_l)\nonumber\\
		&=\int_\OO G^n_t(x_k,\kappa_n(y))u_0(\kappa_n(y))\ud y.
	\end{align}
	Similarly, one has
	\begin{align}\label{vecf2}
		\big(A_ne^{-A_n^2(t-s)}B_n(\vec{f}(s))\big)_k
		=\int_\OO \Delta_{n,y}G^n_{t-s}(x_k,\kappa_n(y))b(f(s,\kappa_n(y)))\ud y,
	\end{align}
	and uses \eqref{theta} to get
	\begin{align}\label{vecf3}
		\Big(\sqrt{\frac{n}{\pi}}e^{-A_n^2(t-s)}\Sigma_n(\vec{f}(s))q(s)\Big)_k=\int_\OO G^n_{t-s}(x_k,\kappa_n(y))\sigma(f(s,\kappa_n(y)))\theta(q)(s,\kappa_n(y))\ud y.
	\end{align}
	Plugging \eqref{vecf1}-\eqref{vecf3} into \eqref{vecf} gives
	\begin{align*}
		f(t,x_k)
		=&\;\int_\OO G^n_t(x_k,y)u_0(\kappa_n(y))\ud y+
		\;+\int_0^t\int_\OO \Delta_{n,y}G^n_{t-s}(x_k,y)b(f(s,\kappa_n(y)))\ud y\ud s\\ 
		&\;+\int_0^t\int_\OO G^n_{t-s}(x_k,y)\sigma(f(s,\kappa_n(y)))\theta(q)(s,y)\ud y\ud s.
	\end{align*}
	Thus, $f=\Upsilon^n(\theta(q))$ due to $f(t,\cdot)=\Pi_n(f(t,\cdot))$ and $\Pi_n(G^n_t(\cdot,y))=G^n_t(\cdot,y)$. In this way, we obtain
	\begin{align*}
		\tilde{J}^n(f)=\inf_{\{q\in \mbf L^2(0,T;\mbb R^n):~ \Upsilon^n(\theta(q))=f\}}\frac{1}{2}\int_0^T|q(t)|^2\ud t,~f\in\mcal P_n(\OO_T;\mbb R).
	\end{align*}
	Notice that 
	\begin{align}\label{thetanorm}
		\int_{0}^{T}|q(t)|^2\ud t=\sum_{l=1}^n\int_0^T|q_l(t)|^2\ud t=\frac{\pi}{n}\int_0^T\sum_{l=1}^n|\theta(q)(t,x_l)|^2\ud t=\int_0^T\int_\OO|\theta(q)(t,x)|^2\ud x\ud t,
	\end{align}
	which, along with the fact that $\theta$ is a bijection  from $\mbf L^2(0,T;\mbb R^n)$ to $\mcal N_n(\OO_T;\mbb R)$, yields
	\begin{align*}
		\tilde{J}^n(f)=\inf_{\{h\in \mcal N_n(\OO_T;\mbb R):~ \Upsilon^n(h)=f\}}\frac{1}{2}\|h\|_{\mbf L^2(\OO_T)}^2,~f\in\mcal P_n(\OO_T;\mbb R).
	\end{align*}
	For fixed  $\bar x\in\OO$, define the coordinate map $\xi_{(T,\bar x)}: \mcal P_n(\OO_T;\mbb R)\rightarrow \R$ by $\xi_{(T,\bar x)}(f)=f(T,\bar x)$ for $f\in \mcal P_n(\OO_T;\mbb R)$. It follows from the continuity of $\xi_{(T,\bar x)}$, the LDP of $\{u^{\varepsilon,n}\}_{\varepsilon>0}$ on $\mcal P_n(\OO_T;\mbb R)$ and Proposition \ref{contraction} that $\{u^{\varepsilon}(T,\bar x)\}_{\varepsilon>0}$ satisfies an LDP with the good rate function 
	\begin{align*}
		I^n(y)=\inf_{\{f\in \mcal P_n(\OO_T;\mbb R):~  f(T,\bar{x})=y\}}\tilde{J}^n(f),~y\in\mbb R.
	\end{align*}
	Define $J^n:\mbf C(\OO_T;\mbb R)\to\mbb R$  by
	\begin{align*}
		J^n(f)=\inf_{\{h\in \mcal N_n(\OO_T;\mbb R):~  \Upsilon^n(h)=f\}}\frac{1}{2}\|h\|_{\mbf L^2(\OO_T)}^2, ~f\in\mbf C(\OO_T;\mbb R).
	\end{align*}
	Noting  that $\text{Im}(\Upsilon^n)\subseteq\mcal P_n(\OO_T;\mbb R)$ and using the convention $\inf\emptyset=+\infty$, we have
	\begin{align*}
		J^n(f)=
		\begin{cases}
			\tilde{J}^n(f)\qquad&\text{if}~f\in\mcal P_n(\OO_T;\mbb R),\\
			+\infty\qquad&\text{if}~f\in\mbf C(\OO_T;\mbb R)-\mcal P_n(\OO_T;\mbb R).
		\end{cases}
	\end{align*}
	Thus, 
	$	I^n(y)=\inf\limits_{\{f\in \mbf C(\OO_T;\mbb R):~  f(T,\bar{x})=y\}}J^n(f),~y\in\mbb R.$
	
	Finally, we give the equivalent representation of $J^n$:
	\begin{align}\label{JJn}
		J^n(f)=\inf_{\{h\in \mbf L^2(\OO_T;\mbb R):~  \Upsilon^n(h)=f\}}\frac{1}{2}\|h\|_{\mbf L^2(\OO_T)}^2, ~f\in\mbf C(\OO_T;\mbb R).
	\end{align}
	It suffices to show that 
	$$\inf\limits_{\{h\in\mbf L^2(\OO_T;\mbb R):~\Upsilon^n(h)=f\}}\frac{1}{2}\|h\|_{\mbf L^2(\OO_T)}^2=\inf\limits_{\{h\in\mcal N_n(\OO_T;\mbb R):~\Upsilon^n(h)=f\}}\frac{1}{2}\|h\|_{\mbf L^2(\OO_T)}^2$$ 
	provided that   $f\in \text{Im}(\Upsilon^n)$.  It is not difficult to see that
	$$\inf\limits_{\{h\in\mbf L^2(\OO_T;\mbb R):~\Upsilon^n(h)=f\}}\frac{1}{2}\|h\|_{\mbf L^2(\OO_T)}^2\le\inf\limits_{\{h\in\mcal N_n(\OO_T;\mbb R):~\Upsilon^n(h)=f\}}\frac{1}{2}\|h\|_{\mbf L^2(\OO_T)}^2$$ 
	provided that  $f\in \text{Im}(\Upsilon^n)$. 
	It remains to prove the inverse inequality. 
	Suppose that $f\in \text{Im}(\Upsilon^n)$, $h\in\mbf L^2(\OO_T;\mbb R)$, and $\Upsilon^n(h)=f$. Then 
	\begin{align}\label{sdr1}
		f(t,x)
		=	&\;\int_\OO G^n_t(x,y)u_0(\kappa_n(y))\ud y
		+\int_0^t\int_\OO \Delta_{n,y} G^n_{t-s}(x,y)b(f(s,\kappa_n(y)))\ud y\ud s\nonumber \\
		&\;	+\int_0^t\int_\OO G^n_{t-s}(x,y)\sigma(f(s,\kappa_n(y)))h(s,y)\ud y\ud s.
	\end{align}
	Further, we define $\tilde{h}\in\mcal N_n(\OO_T;\mbb R)$ as
	\begin{align}\label{htilde}
		\tilde{h}(t,x)=\frac{n}{\pi}\int_{\frac{(k-1)\pi}{n}}^{\frac{k\pi}{n}}h(t,y)\ud y,\quad \text{for}~t\in[0,T]~\text{and}~x\in\big[\frac{(k-1)\pi}{n},\frac{k\pi}{n}\big),~k=1,\ldots,n.
	\end{align}
	It is verified that $\Upsilon^n(\tilde{h})=f$ and $\|\tilde{h}\|_{\mbf L^2(\OO_T)}\le \|h\|_{\mbf L^2(\OO_T)}$ by the H\"older inequality.
	%A direct computation leads to
	%\begin{align*}
	%	&\;\int_\OO G^n_{t-s}(x,y)\sigma(f(s,\kappa_n(y)))\tilde h(s,y)\ud y=\sum_{k=1}^{n}\int_{\frac{(k-1)\pi}{n}}^{\frac{k\pi}{n}}G^n_{t-s}(x,y)\sigma(f(s,\kappa_n(y)))\tilde h(s,y)\ud y\\
	%	=&\;\sum_{k=1}^{n}\frac{\pi}{n}G^n_{t-s}(x,x_k)\sigma(f(s,x_k))\frac{n}{\pi}\int_{\frac{(k-1)\pi}{n}}^{\frac{k\pi}{n}}h(s,y)\ud y=\int_\OO G^n_{t-s}(x,y)\sigma(f(s,\kappa_n(y)))h(s,y)\ud y.
	%\end{align*}
	%\begin{align*}
	%	\|\tilde{h}\|_{\mbf L^2(\OO_T)}^2=&\;\int_0^T\sum_{k=1}^{n}\int_{\frac{(k-1)\pi}{n}}^{\frac{k\pi}{n}}|\tilde{h}(s,y)|^2\ud y\ud s\nonumber\\
	%	=&\;\int_0^T \frac{n}{\pi}\sum_{k=1}^{n}\Big(\int_{\frac{(k-1)\pi}{n}}^{\frac{k\pi}{n}}{h}(s,y)\ud y\Big)^2\ud s\nonumber\\
	%	\le &\; \int_0^T \sum_{k=0}^{n-1}\int_{\frac{(k-1)\pi}{n}}^{\frac{k\pi}{n}}|{h}(s,y)|^2\ud y\ud s=	\|{h}\|_{\mbf L^2(\OO_T)}^2.
	%\end{align*}
	Thus,  for any $h\in\mbf L^2(\OO_T;\mbb R)$ satisfying $\Upsilon^n(h)=f$, there is some $\tilde{h}\in\mcal N_n(\OO_T;\mbb R)$ such that $\Upsilon^n(\tilde h)=f$ and $\|\tilde h\|^2_{\mbf L^2(\OO_T)}\le \| h\|_{\mbf L^2(\OO_T)}^2$. These imply 
	$$\inf\limits_{\{h\in\mbf L^2(\OO_T;\mbb R):~\Upsilon^n(h)=f\}}\frac{1}{2}\|h\|_{\mbf L^2(\OO_T)}^2\ge\inf\limits_{\{h\in\mcal N_n(\OO_T;\mbb R):~\Upsilon^n(h)=f\}}\frac{1}{2}\|h\|_{\mbf L^2(\OO_T)}^2.$$ 
	Thus \eqref{JJn} holds	and the proof is finished. 
\end{proof}

\section{Pointwise convergence of one-point LDRF of spatial FDM}\label{Sec:pointwise}
In this section, we prove the pointwise convergence of the one-point LDRF $I^n$ for the spatial FDM based on the technical route in \cite{LDPofSWE}, which is shown as follows. 
\begin{figure}[h]  
	\flushleft
	\tiny  
	\tikzstyle{format}=[rectangle,draw,thin,fill=white]  
	%定义语句块的颜色,形状和边
	\tikzstyle{test}=[diamond,aspect=2,draw,thin]  
	%定义条件块的形状,颜色
	\tikzstyle{point}=[coordinate,on grid,]  
	%像素点,用于连接转移线
	\begin{tikzpicture}[node distance=30mm,auto,>=latex',thin,start chain=going right,every join/.style={norm},] 
		%start chain=going below指明了流程图的默认方向，node distance=8mm则指明了默认的node距离。这些可以在定义node的时候更改，比如说 
		%\node[point,right of=n3,node distance=10mm] (p0){};  
		%这里声明了node p0，它在node n3 的右边，距离是10mm。
		\phantom{\node[format](blank){aaa};}
		\phantom{\node[format,right of=blank,node distance=35mm](blank0){aaa};}
		\node[format,below of=blank,node distance=30mm](point){$\overset{\text{Pointwise Convergence}}{\text{of}~I^n}$};
		\node[format,below of=blank0,node distance=15mm](gammaCov){$\Gamma$-convergence of $J^n_y$};
		\node[format,below of=gammaCov,node distance=30mm](coercive){ Equi-coerciveness of $J^n_y$};
		\node[format,right of=coercive,node distance=70mm](boundHolder){ $\overset{\text{Equi-continuity and Uniform Boundedness}}{\text{of}~ \Upsilon^n(\mbb S_a)}$};
		\phantom{\node[format,right of=blank0,node distance=35mm](blank1){aaa};}
		\node[format,below of=blank1,node distance=7mm](gammalow){$\Gamma$-liminf Inequality};
		\node[format,below of=gammalow,node distance=15mm](gammasup){$\Gamma$-limsup Inequality};
		\node[format,right of=blank1,node distance=45mm](compact){\phantom{hahjh}Compactness of $\Upsilon$ \phantom{hahjh}};
		\node[format,below of=compact,node distance=10mm] (localconver){
			$\overset{\text{	Locally Uniform Convergence}}{\text{of}~ \Upsilon^n~ \text{to}~ \Upsilon}$};
		\node[format,below of=localconver,node distance=10mm] (localLip){Locally Lipschitz Property of $\Upsilon$};
		\node[format,below of=localLip,node distance=10mm] (inverse){
			$\overset{\text{ Properties of Inverse Operator}}{\text{of}~ \Upsilon^n}$
		};
		\node[point,right of=gammaCov,node distance=15mm](point1){};
		\node[point,left of=gammalow,node distance=16mm](point2){};
		\node[point,left of=gammasup,node distance=16mm](point3){};
		\draw[->](point1)--(gammaCov);
		\draw[-](point1)-|(point2);
		\draw[-](gammalow)--(point2);
		\draw[-](gammasup)--(point3);
		\draw[-](point1)-|(point3);
		\node[point,right of=gammalow,node distance=12.5mm](point4){};
		\node[point,right of=gammasup,node distance=13mm](point5){};
		\node[point,left of=compact,node distance=19.8mm](point6){};
		\node[point,left of=localconver,node distance=17mm](point7){};
		\node[point,left of=localLip,node distance=20.5mm](point8){};
		\node[point,left of=inverse,node distance=17.5mm](point9){};
		\draw[-](compact)--(point6);
		\draw[-](point4)--(gammalow);
		\draw[->](point6)--(point4);
		\draw[-](localconver)--(point7);
		\draw[->](point7)--(point4);
		\draw[->](point7)--(point5);
		\draw[-](point5)--(gammasup);
		\draw[-](point8)--(localLip);
		\draw[->](point8)--(point5);
		\draw[-](point9)--(inverse);
		\draw[->](point9)--(point5);
		\draw[->](boundHolder)--(coercive);
		\node[point,right of=point,node distance=13mm](pointA){};
		\node[point,left of=gammaCov,node distance=13.5mm](pointB){};
		\node[point,left of=coercive,node distance=15.6mm](pointC){};
		\draw[-](pointA)--(point);
		\draw[->](pointB)--(pointA);
		\draw[->](pointC)--(pointA);
	\end{tikzpicture} 
	\vspace{2mm}
	\caption{Technical route for the  convergence analysis of $I^n$} 
	\label{F1}
\end{figure}
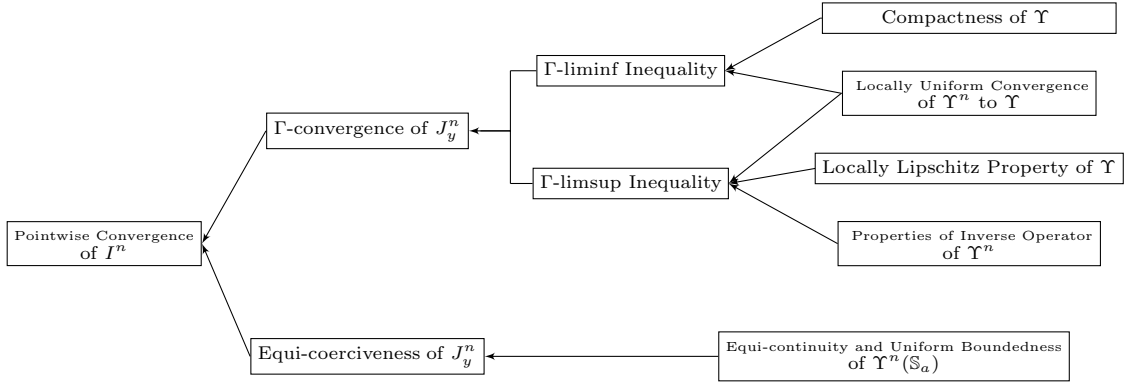

Hereafter, we always fix $\bar{x}\in\OO$ without extra statements.
For any $n\in\mbb N^+$, $y\in\mbb R$ and $f\in\mbf C(\OO_T;\mbb R)$, denote
\begin{align*}
	J_y(f):=\begin{cases}
		\inf\limits_{\{h\in\mbf L^2(\OO_T;\mbb R):~\Upsilon(h)=f\}}\frac{1}{2}\|h\|_{\mbf L^2(\OO_T)}^2,\quad &\text{if}~f(T,\bar x)=y,~f\in \text{Im}(\Upsilon),\\
		+\infty, & \text{otherwise},
	\end{cases}
\end{align*}
and 
\begin{align}\label{Jn}
	J^n_y(f):=\begin{cases}
		\inf\limits_{\{h\in\mbf L^2(\OO_T;\mbb R):~\Upsilon^n(h)=f\}}\frac{1}{2}\|h\|_{\mbf L^2(\OO_T)}^2,\quad &\text{if}~f(T,\bar x)=y,~f\in \text{Im}(\Upsilon^n),\\
		+\infty, & \text{otherwise}.
	\end{cases}
\end{align}
Then we rewrite rate functions $I^n$ and $I$ as, respectively,
\begin{align}\label{IandIn}
	I(y)=\inf_{\{f\in \mathbf C(\OO_T;\mbb R)\}}J_y(f),\quad\ I^n(y)=\inf_{\{f\in \mathbf C(\OO_T;\mbb R)\}}J^n_y(f),\quad y\in\mbb R.
\end{align}
As mentioned previously in the introduction,
we will give the pointwise convergence of $I^n$ by establishing the $\Gamma$-convergence and equi-coerciveness of $\{J_y^n\}_{n\in\mbb N^+}$, which relies on some qualitative properties of solution mappings of skeleton equations \eqref{eq:Skeleton} and \eqref{DisSkeleton}.

\subsection{Qualitative properties of $\Upsilon$ and $\Upsilon^n$}\label{Sec4.1}
Recall $\mbb S_a=\{h\in\mbf L^2(\OO_T;\mbb R):\|h\|_{\mbf L^2(\OO_T)}\le a\}$, $a\in[0,+\infty)$. 
The following proposition gives the uniform boundedness and H\"older continuity of $\Upsilon$  on bounded sets.
\begin{pro}\label{boundandHol}
	Let Assumptions \ref{assum1}-\ref{assum2} hold and $u_0\in\mbf C^2(\OO_T;\mbb R)$. Then we have
	\begin{itemize}
		\item [(1)] 	For any $a\ge 0$, $$\sup_{h\in \mbb S_a}\|\Upsilon(h)\|_{\mbf C(\OO_T)}\le K(a,T).$$
		\item[(2)] 	For any $a\ge0$, $\alpha\in(0,1)$, $s,t\in[0,T]$ and $x,y\in\OO$,
		$$\sup_{h\in\mbb S_a}|\Upsilon(h)(t,x)-\Upsilon(h)(s,y)|\le K(a,T,\alpha)(|x-y|+|t-s|^{\frac{3\alpha}{8}}).$$
	\end{itemize}
\end{pro}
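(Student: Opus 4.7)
The plan is to mirror the strategies of Propositions \ref{pro1} and \ref{pro2}, now specialized to the deterministic skeleton equation \eqref{eq:Skeleton}, and to treat the two parts in order since (2) will quote (1).

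For part (1) I would carry out a two-step bootstrap. First, I would establish an $\mathbf L^p$ bound
$$\sup_{h\in\mbb S_a}\sup_{t\in[0,T]}\|\Upsilon(h)(t,\cdot)\|_{\mbf L^p(\OO)}\le K(a,T,p),\qquad p\ge 1,$$
which is the $\varepsilon=0$ specialization of \cite[Proposition 3.2]{LDPofCH}; the proof of that bound is an energy estimate exploiting the fact that $\Delta^2$ is dissipative and that the leading cubic term in $b(u)u$ has a favourable sign, with the control piece being absorbed by Gronwall through $\|h\|_{\mbf L^2(\OO_T)}\le a$ and boundedness of $\sigma$. Second, I would upgrade from $\mbf L^p$ to $\mbf L^\infty$ by estimating each term of \eqref{eq:Skeleton} separately, exactly as in Proposition \ref{pro1}. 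The initial contribution $\int_\OO G_t(x,z)u_0(z)\,\ud z$ is bounded using \cite[Lemma 2.2]{Cardon2001}. The nonlinear contribution is handled by Proposition \ref{Green}(2) together with the growth bound $|b(u)|\le K(1+|u|^3)$: one gets
$$\Bigl\|\int_0^t\!\int_\OO \Delta G_{t-s}(\cdot,z)b(\Upsilon(h)(s,z))\,\ud z\,\ud s\Bigr\|_{\mbf L^\infty(\OO)}\le K\int_0^t(t-s)^{-3/4}\bigl(1+\|\Upsilon(h)(s,\cdot)\|_{\mbf L^3(\OO)}^3\bigr)\,\ud s,$$
which is finite by the first step and the integrable singularity. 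The control contribution is bounded by Cauchy--Schwarz with $\|h\|_{\mbf L^2(\OO_T)}\le a$, boundedness of $\sigma$, and the estimate $\int_0^T\!\int_\OO |G_{t-s}(x,z)|^2\,\ud z\,\ud s\le K T^{3/4}$ derived from Proposition \ref{Green}(1) via a Gaussian change of variable.

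For part (2) I would split $\Upsilon(h)(t,x)-\Upsilon(h)(s,y)$ into the three differences coming from \eqref{eq:Skeleton} and estimate each separately. The initial term is smooth in $(t,x)$ with Lipschitz spatial dependence (coming from $u_0\in\mbf C^1$ and the Neumann boundary condition) and sufficiently regular in time by the parabolic smoothing of $\partial_t+\Delta^2$. The nonlinear term is dominated by $\|b(\Upsilon(h))\|_{\mbf L^\infty(\OO_T)}$, which is uniformly bounded in $h\in\mbb S_a$ by part (1), multiplied by the integrals estimated in Proposition \ref{Green}(4); this gives exactly the Lipschitz bound in $x$ and the $\frac{3\alpha}{8}$-Hölder bound in $t$. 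The control term is handled by Cauchy--Schwarz with $\|h\|_{\mbf L^2(\OO_T)}\le a$ and Proposition \ref{Green}(3): the square root of the spatial estimate $K|x-y|^2$ yields Lipschitz in $x$, and the square root of the temporal estimate $K|t-s|^{3\alpha/4}$ yields the $\frac{3\alpha}{8}$-Hölder rate in $t$.

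\textbf{Main obstacle.} The chief difficulty is that $b(u)=u^3-u$ is neither globally Lipschitz nor has a linear-growth one-sided Lipschitz constant, so a direct Gronwall closure in $\mbf L^\infty$ is unavailable. The remedy is the bootstrap outlined above: close the energy estimate first at the level of $\mbf L^p$, where the cubic term contributes a favourable sign, and only then exploit the smoothing of the biharmonic Green function together with the explicit cubic growth of $b$ to lift the estimate to $\mbf L^\infty$. Once part (1) is in hand, part (2) reduces to deterministic bookkeeping on the Green function bounds in Proposition \ref{Green}(1), (3), (4).
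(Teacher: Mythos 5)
Your proposal is correct and follows essentially the same route as the paper: part (1) is the deterministic analogue of the a priori estimates for the controlled equation (the paper simply defers to the argument of Cardon--Weber's Theorem 1.3 with the stochastic integral replaced by the $h$-integral, which is exactly your $\mathbf L^p$-to-$\mathbf L^\infty$ bootstrap), and part (2) is the same three-term decomposition of \eqref{eq:Skeleton}, handling the initial term by the Green-function regularity (the paper quotes \cite[Lemma 2.3]{Cardon2001}, giving $K(|t-s|^{1/2}+|x-y|)$, consistent with your claim), the nonlinear term by part (1) together with Proposition \ref{Green}(4), and the control term by Cauchy--Schwarz with $\|h\|_{\mbf L^2(\OO_T)}\le a$ and Proposition \ref{Green}(3).
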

\begin{proof}
	The proof of (1) is omitted since it
	is similar to that of \cite[Theorem 1.3]{Cardon2001}  but by replacing the stochastic integral
	by the deterministic integral containing $h$. Next we prove the second conclusion.
	
	For any $h\in\mbb S_a$, $a\in[0,+\infty)$, denote $f=\Upsilon(h)$. Then $f(t,x)=f_0(t,x)+f_1(t,x)+f_2(t,x)$, where
	$f_0(t,x):=\int_\OO G_t(x,z)u_0(z)\ud z$, $f_1(t,x):=\int_0^t\int_\OO\Delta G_{t-r}(x,z)b(f(r,z))\ud z\ud r$, $f_2(t,x):=\int_0^t\int_\OO G_{t-r}(x,z)\sigma(f(r,z))h(r,z)\ud z\ud r$.
	It follows from \cite[Lemma 2.3]{Cardon2001} that for any $(t,x),(s,y)\in\OO_T$,
	$|f_0(t,x)-f_0(s,y)|\le K(|t-s|^{\frac{1}{2}}+|x-y|)$.
	In addition, for any $(t,x),(s,y)\in\OO_T$ with $t\ge s$ and $\alpha\in(0,1)$, using the first conclusion of this lemma and \eqref{spaceHol1}-\eqref{timeHol1} gives
	\begin{align*}
		&\quad~|f_1(t,x)-f_1(s,y)|\le|f_1(t,x)-f_1(t,y)|+|f_1(t,y)-f_1(s,y)|\\
		&\le \int_{0}^t\int_\OO\big|(\Delta G_{t-r}(x,z)-\Delta G_{t-r}(y,z))b(f(r,z))\big|\ud z\ud r+\int_s^t\int_\OO|\Delta G_{t-r}(y,z)b(f(r,z))|\ud z\ud r\\
		&\quad+\int_{0}^s\int_\OO\big|(\Delta G_{t-r}(y,z)-\Delta G_{s-r}(y,z))b(f(r,z))\big|\ud z\ud r\\
		&\le K(a,T,\alpha)(|x-y|+|t-s|^{\frac{3\alpha}{8}}).
	\end{align*}
	Similarly, by the H\"older inequality, $h\in\mbb S_a$ and \eqref{spaceHol}-\eqref{timeHol}, it holds that for any $(t,x),(s,y)\in\OO_T$ and $\alpha\in(0,1)$,
	$$|f_2(t,x)-f_2(s,y)|\le K(a,T,\alpha)(|x-y|+|t-s|^{\frac{3\alpha}{8}}).$$
	Combining the above estimates finishes the proof. 
\end{proof}

The following two lemmas present the complete continuity and locally Lipschitz property of $\Upsilon$. 
\begin{lem}\label{UpsilonCompact}
	Let Assumptions \ref{assum1}-\ref{assum2} hold. Then	$\Upsilon$ is a completely continuous operator from $\mbf L^2(\OO_T;\mbb R)$ to $\mbf C(\OO_T;\mbb R)$, i.e., for every  sequence $\{h_n\}_{n\in\mbb N^+}$ weakly converging to $h$ in $\mbf L^2(\OO_T;\mbb R)$, the sequence  $\{\Upsilon (h_n)\}_{n\in\mbb N^+}$ converges to $\Upsilon (h)$ in the norm of $\mbf C(\OO_T;\mbb R)$.
\end{lem}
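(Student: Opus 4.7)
\textbf{Proof proposal for Lemma \ref{UpsilonCompact}.} The plan is to combine the uniform estimates of Proposition \ref{boundandHol} with a subsequence argument, converting weak convergence of $\{h_n\}$ into strong convergence of $\{\Upsilon(h_n)\}$ through the smoothing provided by the Green kernel $G$. First, weak convergence in $\mbf L^2(\OO_T;\mbb R)$ furnishes some $a\ge0$ with $\{h_n\}\cup\{h\}\subseteq\mbb S_a$. By Proposition \ref{boundandHol}, the family $\{\Upsilon(h_n)\}_{n\in\mbb N^+}$ is uniformly bounded and equi-H\"older continuous on the compact set $\OO_T$, hence precompact in $\mbf C(\OO_T;\mbb R)$ by the Arzel\`a--Ascoli theorem. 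It therefore suffices to show that every $\mbf C(\OO_T;\mbb R)$-convergent subsequence of $\{\Upsilon(h_n)\}$ has $\Upsilon(h)$ as its limit.

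To that end, fix such a subsequence (not relabelled) with $f_n:=\Upsilon(h_n)\to f^*$ in $\mbf C(\OO_T;\mbb R)$, and pass to the limit in the three terms of the skeleton equation \eqref{eq:Skeleton}. The initial-data term is independent of $n$. For the nonlinear drift term, the uniform bound $\sup_n\|f_n\|_{\mbf C(\OO_T)}<\infty$ from Proposition \ref{boundandHol}(1) reduces $b$ to a Lipschitz function on the relevant range, so
\begin{align*}
\sup_{(t,x)\in\OO_T}\Big|\int_0^t\int_\OO \Delta G_{t-s}(x,z)\bigl[b(f_n(s,z))-b(f^*(s,z))\bigr]\,\ud z\,\ud s\Big|\le K\|f_n-f^*\|_{\mbf C(\OO_T)}\int_0^T(t-s)^{-3/4}\ud s
\end{align*}
vanishes as $n\to\infty$ by Proposition \ref{Green}(2). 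For the $h_n$-term, split $\sigma(f_n)h_n=(\sigma(f_n)-\sigma(f^*))h_n+\sigma(f^*)h_n$; the first piece is controlled uniformly in $(t,x)$ by the Lipschitz continuity of $\sigma$, the Cauchy--Schwarz inequality, $\|h_n\|_{\mbf L^2(\OO_T)}\le a$, and the bound $\int_0^t\int_\OO|G_{t-s}(x,z)|^2\,\ud z\,\ud s\le K(T)$ from Proposition \ref{Green}(3).

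The main obstacle is the remaining term $F_n(t,x):=\int_0^t\int_\OO G_{t-s}(x,z)\sigma(f^*(s,z))h_n(s,z)\,\ud z\,\ud s$: weak convergence only yields pointwise convergence, but we need it uniformly in $(t,x)$. Pointwise is immediate because, for each fixed $(t,x)\in\OO_T$, the function $(s,z)\mapsto G_{t-s}(x,z)\sigma(f^*(s,z))\mathbf 1_{s<t}$ lies in $\mbf L^2(\OO_T;\mbb R)$ (by boundedness of $\sigma$ and Proposition \ref{Green}(3)), so $F_n(t,x)\to F(t,x):=\int_0^t\int_\OO G_{t-s}(x,z)\sigma(f^*(s,z))h(s,z)\,\ud z\,\ud s$. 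To upgrade this to uniform convergence, I would verify that $\{F_n\}_{n\in\mbb N^+}$ is equi-continuous on $\OO_T$: applying Cauchy--Schwarz together with the increment bounds \eqref{spaceHol} and \eqref{timeHol} and the uniform $L^2$-bound $\|h_n\|_{\mbf L^2(\OO_T)}\le a$ gives $|F_n(t,x)-F_n(s,y)|\le K(a,T,\alpha)(|x-y|+|t-s|^{3\alpha/8})$ uniformly in $n$. Pointwise convergence plus equi-continuity on the compact set $\OO_T$ yields $F_n\to F$ uniformly (a standard $\varepsilon/3$-argument via a finite net).

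Combining these three limits shows that $f^*$ satisfies the skeleton equation \eqref{eq:Skeleton} with input $h$, hence $f^*=\Upsilon(h)$ by uniqueness of the skeleton solution (which follows from Gronwall-type reasoning using the uniform bound on $f^*$ to tame the cubic nonlinearity $b$). Since every subsequential limit equals $\Upsilon(h)$, the precompact sequence $\Upsilon(h_n)$ itself converges to $\Upsilon(h)$ in $\mbf C(\OO_T;\mbb R)$, completing the proof.
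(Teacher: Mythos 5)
Your argument is correct in substance, but it takes a genuinely different route from the paper. The paper proves this lemma in three lines: in the proof of Theorem \ref{LDPofu} it has already been asserted (via the weak convergence method, following the procedure of Dupuis et al.) that $u^{\theta(\varepsilon),v^{\varepsilon}}$ converges in distribution to $u^{0,v}$ whenever $v^{\varepsilon}$ converges to $v$ in distribution in $\mscr A_M$; specializing to deterministic controls $h_n$ converging weakly to $h$ and using $\Upsilon(g)=u^{0,g}$ gives the complete continuity immediately. You instead give a self-contained deterministic compactness argument: uniform boundedness and equi-H\"older continuity of $\Upsilon(\mbb S_a)$ yield precompactness in $\mbf C(\OO_T;\mbb R)$ by Arzel\`a--Ascoli, and every subsequential limit is identified as $\Upsilon(h)$ by passing to the limit in \eqref{eq:Skeleton}, the only delicate term being $\int_0^t\int_\OO G_{t-s}(x,z)\sigma(f^*(s,z))h_n(s,z)\,\ud z\,\ud s$, which you handle by weak convergence against a fixed $\mbf L^2$ kernel upgraded to uniform convergence through equicontinuity. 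Your route is longer but more transparent: all estimates are explicit and it does not lean on the convergence-in-distribution statement that the paper only sketches by reference; the paper's route is shorter and reuses work already needed for the LDP. Two points to tidy up. First, you cite Proposition \ref{boundandHol}, whose statement carries the extra hypothesis $u_0\in\mbf C^2$, which Lemma \ref{UpsilonCompact} does not assume; this is harmless, because the initial-data term $\int_\OO G_t(\cdot,z)u_0(z)\,\ud z$ is a single fixed continuous function on the compact set $\OO_T$ and the two $h$-dependent terms are equi-H\"older by the estimates \eqref{spaceHol}--\eqref{timeHol1} without extra smoothness of $u_0$, but you should argue this directly rather than invoke the proposition wholesale. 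Second, the identification step hinges on uniqueness of the bounded continuous solution of \eqref{eq:Skeleton} for the given $h$ (a Gronwall argument with the singular kernel $(t-s)^{-3/4}$, using the a priori sup bound to make $b$ effectively Lipschitz); you correctly flag this, but it should be stated and proved (or referenced) explicitly, since without it the limit passage does not conclude $f^*=\Upsilon(h)$.
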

\begin{proof}
	It is shown in the proof of Theorem \ref{LDPofu} that 
	for any family $\{v^{\varepsilon}\}\subseteq \mscr A_M$ converging to some $v\in \mscr A_M$ in distribution as $\varepsilon\to 0$, $u^{0,v^{\varepsilon}}$ as $\mbf C(\OO_T;\mbb R)$-valued random variables converges to $u^{0,v}$  in distribution (see \eqref{controll} for the definition of $u^{\varepsilon,v}$). Especially, if a sequence   $\{h_n\}_{n\in\mbb N^+}$ of non-random functions weakly converges to $h$ in $\mbf L^2(\OO_T;\mbb R)$, then $\{u^{0,h_n}\}_{n\in \mbb N^+}$ converges to $u^{0,h}$ in $\mbf C(\OO_T;\mbb R)$. This immediately yields the conclusion by the fact $\Upsilon(g)=u^{0,g}$  for any $g\in\mbf L^2(\OO_T;\mbb R)$.  
\end{proof}

\begin{lem}\label{Upsilonh}
	Let Assumptions \ref{assum1} and \ref{assum2} hold.	Then for any $a\ge 0$ and $h_1,\,h_2\in\mbb S_a$, we have 
	$$\|\Upsilon(h_1)-\Upsilon(h_2)\|_{\mbf C(\OO_T)}\le K(a,T)\|h_1-h_2\|_{\mbf L^2(\OO_T)}.$$
\end{lem}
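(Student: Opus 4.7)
The plan is to set $f_i:=\Upsilon(h_i)$, $i=1,2$, subtract the two instances of the skeleton equation \eqref{eq:Skeleton}, and reduce the estimate to a weakly singular Volterra inequality for $g(t):=\sup_{x\in\OO}|f_1(t,x)-f_2(t,x)|^2$. From Proposition \ref{boundandHol}(1) I obtain a uniform $\mbf C(\OO_T;\mbb R)$-bound for $f_1,f_2$ depending only on $a$ and $T$, so by \eqref{fb-fa} the drift $b$ is Lipschitz with constant $K(a,T)$ on the range of the $f_i$. Subtraction cancels the initial-data term and leaves $f_1-f_2=A+B+C$ with
\begin{align*}
A(t,x)&:=\int_0^t\!\int_\OO \Delta G_{t-s}(x,z)\big[b(f_1)-b(f_2)\big](s,z)\,\ud z\,\ud s,\\
B(t,x)&:=\int_0^t\!\int_\OO G_{t-s}(x,z)\big[\sigma(f_1)-\sigma(f_2)\big](s,z)\,h_1(s,z)\,\ud z\,\ud s,\\
C(t,x)&:=\int_0^t\!\int_\OO G_{t-s}(x,z)\sigma(f_2)(s,z)\big[h_1-h_2\big](s,z)\,\ud z\,\ud s.
\end{align*}

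I would estimate each piece separately. For $A$, the local Lipschitz property of $b$ yields $\|b(f_1(s,\cdot))-b(f_2(s,\cdot))\|_{\mbf L^1(\OO)}\le K(a,T)\sqrt{g(s)}$; Proposition \ref{Green}(2) together with Cauchy--Schwarz in $s$ then gives $|A(t,x)|^2\le K(a,T)\int_0^t(t-s)^{-3/4}g(s)\,\ud s$. For $C$, the boundedness of $\sigma$, Cauchy--Schwarz, and the bound $\int_0^t\!\int_\OO |G_{t-s}(x,z)|^2\ud z\,\ud s\le K(T)$ (Proposition \ref{Green}(3) with $s=0$) deliver $|C(t,x)|^2\le K(T)\|h_1-h_2\|_{\mbf L^2(\OO_T)}^2$. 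The delicate piece is $B$: since $h_1$ is only in $\mbf L^2(\OO_T)$, pointwise-in-$s$ control of $h_1$ is unavailable, so I apply Cauchy--Schwarz jointly in $(s,z)$ to the product $|G_{t-s}(x,z)|\sqrt{g(s)}\cdot|h_1(s,z)|$ and get
\begin{align*}
|B(t,x)|^2\le L^2\|h_1\|_{\mbf L^2(\OO_T)}^2\int_0^t g(s)\int_\OO |G_{t-s}(x,z)|^2\,\ud z\,\ud s\le K(a,T)\int_0^t(t-s)^{-1/4}g(s)\,\ud s,
\end{align*}
where the slice bound $\int_\OO|G_{t-s}(x,z)|^2\ud z\le K(T)(t-s)^{-1/4}$ follows from the Gaussian-type estimate in Proposition \ref{Green}(1) via the substitution $u=(x-z)(t-s)^{-1/4}$.

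Taking the supremum over $x$ and absorbing $(t-s)^{-1/4}\le T^{1/2}(t-s)^{-3/4}$ yields
\begin{align*}
g(t)\le K(T)\|h_1-h_2\|_{\mbf L^2(\OO_T)}^2+K(a,T)\int_0^t(t-s)^{-3/4}g(s)\,\ud s,\qquad t\in[0,T].
\end{align*}
Since the kernel exponent $3/4<1$, a generalised Gronwall inequality of Henry type produces $g(t)\le K(a,T)\|h_1-h_2\|_{\mbf L^2(\OO_T)}^2$ for all $t\in[0,T]$, and taking square roots followed by a supremum in $t$ closes the argument. The main technical obstacle is the cross term $B$: the only $\mbf L^2$-control on $h_1$ forces one to spend part of the kernel's integrability against $h_1$, yielding the weaker exponent $-1/4$; happily this is still integrable and can be merged into the dominant $-3/4$ kernel coming from the drift term, so that a single Volterra inequality in $g$ alone is left for the final Gronwall step.
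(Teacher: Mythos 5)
Your argument is correct and is exactly the route the paper intends: the paper omits this proof, noting only that it follows from Proposition \ref{boundandHol}~(1), Proposition \ref{Green}~(1) and \eqref{fb-fa}, and your decomposition into the drift difference, the $(\sigma(f_1)-\sigma(f_2))h_1$ cross term, and the $\sigma(f_2)(h_1-h_2)$ term, estimated via Proposition \ref{Green} and closed by a weakly singular Gronwall inequality, is the standard realization of that sketch. All the individual bounds (the $\mbf L^1$--$\mbf L^\infty$ smoothing for $\Delta G$, the slice bound $\int_\OO|G_{t-s}(x,z)|^2\ud z\le K(t-s)^{-1/4}$, and the Henry-type Gronwall step) check out, so no gap remains.
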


The proof of  Lemma \ref{Upsilonh} can be given by means of Proposition \ref{boundandHol} (1), Proposition \ref{Green} (1) and \eqref{fb-fa}, and  is  omitted. In what follows, we give some qualitative properties for the solution mapping $\Upsilon^n$ of the skeleton equation \eqref{DisSkeleton}. For this end, we present some useful properties of $G^n$ and $\Delta_n$.

\begin{pro}\label{disGreen}
	The following hold.
	\begin{itemize}
		\item [(1)] For any $\alpha\in(0,1)$,	there is $K(\alpha,T)>0$  such that for any  $(t,x),(s,y)\in\OO_T$ with $t\ge s$,
		\begin{gather*}
			\int_0^t\int_\OO|G^n_{t-r}(x,z)-G^n_{t-r}(y,z)|^2\ud z\ud r\le K(\alpha,T)|x-y|^2.\\	 \int_0^s\int_\OO|G^n_{t-r}(x,z)-G^n_{s-r}(x,z)|^2\ud z\ud r+\int_s^t\int_\OO|G^n_{t-r}(x,z)|^2\ud z\ud r\le K(\alpha,T)|t-s|^{\frac{3\alpha}{4}}.
		\end{gather*}
		\item[(2)]  For any $\alpha\in(0,1)$,	there is $K(\alpha,T)>0$  such that for any $(t,x),(s,y)\in\OO_T$ with $t\ge s$,
		\begin{gather*}
			\int_0^t\int_\OO|\Delta_{n,z} G^n_{t-r}(x,z)-\Delta_{n,z} G^n_{t-r}(y,z)|\ud z\ud r\le K(\alpha,T)|x-y|.\\	 \int_0^s\int_\OO|\Delta_{n,z} G^n_{t-r}(x,z)-\Delta _{n,z} G^n_{s-r}(x,z)|\ud z\ud r+\int_s^t\int_\OO|\Delta_{n,z} G^n_{t-r}(x,z)|\ud z\ud r\le K(\alpha,T)|t-s|^{\frac{3\alpha}{8}}.
		\end{gather*}
		\item [(3)] There is a constant $K(T)>0$ such that for any $(t,x)\in\OO_T$,
		\begin{gather*}
			\int_0^t\int_\OO|G^n_s(x,y)-G_s(x,y)|^2\ud y\ud s\le K(T)n^{-2}, \\
			\int_0^t\int_\OO|\Delta_{n,y}G^n_s(x,y)-\Delta G_s(x,y)|\ud y\ud s\le K(T)n^{-1}.
		\end{gather*}
	\end{itemize}
\end{pro}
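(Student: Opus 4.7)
The plan is to exploit the spectral representations $G^n_t(x,z)=\sum_{j=0}^{n-1} e^{-\lambda_{j,n}^2 t}\phi_{j,n}(x)\phi_j(\kappa_n(z))$ and $\Delta_{n,z}G^n_t(x,z)=\sum_{j=0}^{n-1}\lambda_{j,n} e^{-\lambda_{j,n}^2 t}\phi_{j,n}(x)\phi_j(\kappa_n(z))$, together with the orthonormality $\int_{\OO}\phi_i(\kappa_n(z))\phi_j(\kappa_n(z))\,\mathrm dz=\delta_{ij}$ from \eqref{orth0}, the uniform bounds $\lambda_{j,n}^2\ge(16/\pi^4)j^4$ and $|\phi_{j,n}(x)|\le\sqrt{2/\pi}$, the piecewise-linear interpolation estimate $|\phi_{j,n}(x)-\phi_{j,n}(y)|\le C\min(1,j|x-y|)$, and the elementary inequality $|1-e^{-u}|\le\min(u^\gamma,1)$ for $u\ge 0$, $\gamma\in[0,1]$. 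For Part (1), I would expand $|G^n_{t-r}(x,z)-G^n_{t-r}(y,z)|^2$ via the spectral formula, integrate out $z$ using orthonormality, and perform the $r$-integral with $\int_0^t e^{-2\lambda_{j,n}^2 r}\,\mathrm dr\le(2\lambda_{j,n}^2)^{-1}$ for $j\ge 1$ (the $j=0$ spatial difference vanishes). This reduces the spatial bound to the convergent series $\sum_{j\ge 1}j^{-4}\min(1,j^2|x-y|^2)$, yielding $|x-y|^2$ after splitting at $j\sim|x-y|^{-1}$; the temporal bound reduces analogously to $\sum_{j\ge 1}j^{-4}(j^4|t-s|)^{3\alpha/4}$, which converges for $\alpha<1$ and delivers the $|t-s|^{3\alpha/4}$ factor, with the residual $j=0$ contribution $t-s$ absorbed by $T^{1-3\alpha/4}|t-s|^{3\alpha/4}$.

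For Part (2), I would follow the strategy of \cite[(2.18)--(2.21)]{Sheng} adapted to the discrete setting. The crucial prerequisite is to establish discrete Gaussian-type upper bounds
\[
|\Delta_{n,z}G^n_t(x,z)|\le \frac{C}{t^{3/4}}\exp\Bigl(-c\frac{|x-z|^{4/3}}{t^{1/3}}\Bigr),\qquad |\partial_x\Delta_{n,z}G^n_t(x,z)|\le \frac{C}{t}\exp\Bigl(-c\frac{|x-z|^{4/3}}{t^{1/3}}\Bigr),
\]
uniformly in $n$. Granted these, the spatial estimate follows from the fundamental theorem of calculus along $x$ combined with Fubini, giving $\int_{\OO}|\Delta_{n,z}G^n_t(x,z)-\Delta_{n,z}G^n_t(y,z)|\,\mathrm dz\le Ct^{-3/4}|x-y|$; integration in $t$ then produces $K(\alpha,T)|x-y|$. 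For the temporal bound I would split the range of $r$ at the scale $|t-s|$: the ``near-singular'' part $\int_s^t\int_{\OO}|\Delta_{n,z}G^n_{t-r}|\,\mathrm dz\,\mathrm dr$ is controlled directly by the Gaussian bound, while the smoother part uses the identity $\Delta_{n,z}G^n_{t-r}-\Delta_{n,z}G^n_{s-r}=\int_{s-r}^{t-r}\partial_\tau \Delta_{n,z}G^n_\tau\,\mathrm d\tau$ together with the analogous Gaussian bound on $\partial_\tau\Delta_{n,z}G^n_\tau$, and optimizing over the H\"older exponent yields $|t-s|^{3\alpha/8}$ for $\alpha\in(0,1)$.

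For Part (3), I would decompose $G^n-G$ into three error sources: a truncation tail $\sum_{j\ge n}e^{-j^4 s}\phi_j(x)\phi_j(y)$; an eigenvalue perturbation $\sum_{j<n}(e^{-\lambda_{j,n}^2 s}-e^{-j^4 s})\phi_j(x)\phi_j(y)$, controlled via $|\lambda_{j,n}^2-j^4|\le Cj^6/n^2$ (from Taylor expansion of $c_{j,n}$); and a basis perturbation controlled by the polygonal-interpolation estimates $\|\phi_j-\phi_{j,n}\|_{\mbf L^\infty(\OO)}\le Cj^2/n^2$ and $|\phi_j(\kappa_n(y))-\phi_j(y)|\le Cj/n$. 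Each piece is integrated in $s$ against $\int_0^t e^{-2j^4 s}\,\mathrm ds\le Cj^{-4}$ and summed in $j$, and a direct count shows the aggregate contribution to the squared $\mbf L^2$ bound is $O(n^{-2})$. For $\Delta_{n,y}G^n-\Delta G$ the same decomposition applies with an extra factor $\lambda_{j,n}$ or $j^2$, so the truncation tail $\sum_{j\ge n}j^{-2}\sim n^{-1}$ becomes the binding constraint and produces the advertised rate $O(n^{-1})$ in $\mbf L^1(\OO)$.

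The main obstacle will be establishing the uniform-in-$n$ Gaussian kernel bounds required in Part (2): the piecewise-linear character of $\phi_{j,n}$ prevents a direct transfer of the smooth-kernel analysis behind Proposition \ref{Green}(1), so the spectral sum must be separated into low- and high-frequency regimes matched to the scale $t^{-1/3}|x-z|^{4/3}$ (or, alternatively, compared mode-by-mode to the continuous kernel via Part (3)-type estimates and then bounded using Proposition \ref{Green}(1)). Once those bounds are in hand, Parts (1) and (3) reduce to elementary manipulations of convergent series.
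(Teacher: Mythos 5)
The paper itself does not write out a proof of Proposition \ref{disGreen} (it only points to Lemmas 5.1--5.2 and Lemma 3.1 of \cite{Sheng}), so your proposal must stand as a self-contained argument. Part (1) is correct: orthonormality \eqref{orth0} collapses the $z$-integral, $\lambda_{j,n}^2\ge\frac{16}{\pi^4}j^4$, $|\phi_{j,n}(x)-\phi_{j,n}(y)|\le C\min(1,j|x-y|)$ and the splitting of the resulting series give both displays. The genuine gap is Part (2): the uniform-in-$n$ Gaussian bounds on $\Delta_{n,z}G^n_t$ and $\partial_x\Delta_{n,z}G^n_t$ are precisely the hard technical content, and you do not prove them; the fallback you mention (comparison with the continuous kernel via Part (3) plus Proposition \ref{Green}(1)) cannot work as stated, since the Part (3) error is only $O(n^{-1})$ and cannot be absorbed into bounds of size $|x-y|$ or $|t-s|^{3\alpha/8}$ in the regimes $|x-y|\ll n^{-1}$, $|t-s|\ll n^{-4}$. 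Moreover this heavy route is avoidable: since $\|g\|_{\mbf L^1(\OO)}\le\sqrt{\pi}\|g\|_{\mbf L^2(\OO)}$, Part (2) follows from the same spectral computation as Part (1) with the extra weight $\lambda_{j,n}^2$. Indeed $\int_\OO|\Delta_{n,z}G^n_r(x,z)|^2\ud z=\sum_j\lambda_{j,n}^2e^{-2\lambda_{j,n}^2r}\phi_{j,n}(x)^2\le Cr^{-5/4}$, so $\int_s^t\int_\OO|\Delta_{n,z}G^n_{t-r}(x,z)|\ud z\ud r\le C(t-s)^{3/8}$; likewise $\int_\OO|\Delta_{n,z}G^n_r(x,z)-\Delta_{n,z}G^n_r(y,z)|\ud z\le C\min\{r^{-5/8},|x-y|\,r^{-7/8}\}$, whose time integral is $\le K(T)|x-y|$ after splitting at $r=|x-y|^4$; and the temporal increment is handled with $1-e^{-u}\le\min(1,u^{3\alpha/8})$, the restriction $\alpha<1$ ensuring that the resulting factor $(s-r)^{-(5+3\alpha)/8}$ is integrable.

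In Part (3), the decomposition into truncation tail, eigenvalue perturbation, interpolation error $\phi_{j,n}-\phi_j$ in the $x$-variable, and replacement $y\mapsto\kappa_n(y)$ is the right one, and the first three pieces do aggregate to the stated rates. But your ``direct count'' treats the last piece, $\sum_{j<n}e^{-j^4s}\phi_j(x)\big(\phi_j(\kappa_n(y))-\phi_j(y)\big)$, as if its summands were orthogonal in $y$; with only $\|\phi_j(\kappa_n(\cdot))-\phi_j\|_{\mbf L^\infty(\OO)}\le Cj/n$ and the triangle inequality one gets $O(n^{-2}\log n)$ and $O(n^{-1}\log n)$ rather than $O(n^{-2})$ and $O(n^{-1})$. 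The missing ingredient is the exact identity $\int_\OO\phi_j(\kappa_n(y))\phi_k(y)\ud y=\frac{2n}{k\pi}\sin\big(\frac{k\pi}{2n}\big)\delta_{jk}$ for $0\le j\le n-1$, $1\le k\le n-1$ (a direct consequence of \eqref{orth0} together with $\sin A-\sin B=2\cos\frac{A+B}{2}\sin\frac{A-B}{2}$), which shows that the error functions $\psi_j:=\phi_j(\kappa_n(\cdot))-\phi_j$ are mutually orthogonal with $\|\psi_j\|^2_{\mbf L^2(\OO)}=2\big(1-\frac{2n}{j\pi}\sin\frac{j\pi}{2n}\big)\le Cj^2n^{-2}$; with this, the term-by-term summation you perform becomes legitimate and the clean rates follow. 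So the overall architecture is workable, but as written Part (2) rests on an unproven (and unnecessary) kernel estimate, and Part (3) needs the orthogonality argument to reach the stated rates.
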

The proofs of these estimates are similar to those of Lemmas 5.1-5.2 and Lemma 3.1 in \cite{Sheng}, and thus are omitted.

\begin{pro}\label{Laplacen}
	We have the following.
	\begin{itemize}
		\item [(1)] For any $t\in[0,T]$, $x,y\in\OO$, 
		\begin{gather*}
			\frac{\PD}{\PD t}G^n_t(x,y)=-\Delta_{n,y}^2G^n_t(x,y),\quad
			\frac{\PD}{\PD t}G^n_t(\kappa_n(x),y)=-\Delta_{n,x}^2G^n_t(\kappa_n(x),y).
		\end{gather*}
		\item [(2)] For any measurable function $w:\OO\to\mbb R$, $\Delta_n w(x_k)=(A_n\vec{w})_k$, $k=1,\ldots,n$, where $\vec{w}=(w(x_1),\ldots,w(x_n))^\top$.
		\item[(3)] For any measurable function $u,v:\OO\to\mbb R$, the discrete integration by parts formula holds:
		\begin{align*}
			\int_\OO \Delta_nu(x)v(\kappa_n(x))\ud x=\int_\OO u(\kappa_n(x))\Delta_nv(x)\ud x.
		\end{align*}
		\item [(4)] If $w\in\mbf C^2(\OO;\mbb R)$ with $w'(0)=w'(\pi)=0$, then $\|\Delta_n w\|_{\mbf C(\OO)}\le K\|w''\|_{\mbf C(\OO)}$. Further, if $w\in\mbf C^4(\OO;\mbb R)$ and $w(x)=w(0)$ for $x\in[0,x_4]$, and $w(x)=w(\pi)$ for $x\in[x_{n-3},\pi]$, then
		$\|\Delta_n^2 w\|_{\mbf C(\OO)}\le K\|w^{(4)}\|_{\mbf C(\OO)}$.
		\item[(5)] There is a constant $K>0$ such that for any Lipschitz continuous functions $u,v:\OO\to\mbb R$,
		\begin{align*}
			\|\Delta_{n}(uv)\|_{\mbf L^2(\OO)}\le K\big(\|u\|_{\mbf C(\OO)}\|\Delta_{n} v\|_{\mbf L^2(\OO)}+\|v\|_{\mbf C(\OO)}\|\Delta_{n} u\|_{\mbf L^2(\OO)}+[u]_{\mbf C^1(\OO)}[v]_{\mbf C^1(\OO)}\big).
		\end{align*}
	\end{itemize}
\end{pro}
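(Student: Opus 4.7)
My plan is to handle the five items mostly by direct computation that leverages the spectral and finite-difference structure already in place. For (1), I would substitute the definition $G^n_t(x,y)=\sum_{j=0}^{n-1}e^{-\lambda_{j,n}^2 t}\phi_{j,n}(x)\phi_j(\kappa_n(y))$ and differentiate termwise in $t$, producing a factor $-\lambda_{j,n}^2$. On the other side, the eigenvalue identity \eqref{Dnphi} gives $\Delta_n\phi_j(\kappa_n(y))=\lambda_{j,n}\phi_j(\kappa_n(y))$, so $\Delta_{n,y}^2$ also produces $\lambda_{j,n}^2$; the two expressions then match. The second identity in (1) exploits the observation that $\phi_{j,n}(\kappa_n(x))=\phi_j(\kappa_n(x))$, which holds because $\Pi_n$ reproduces $\phi_j$ at the grid nodes $x_1,\ldots,x_n$ and $\kappa_n(x)$ always lies in this set; this reduces the computation to \eqref{Dnphi} applied in the $x$-variable. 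For (2), I would unfold the definition of $\Delta_n$ at each interior $x_k$ and at the boundary points $x_1$ and $x_n$, matching against the explicit entries of $A_n$: the boundary rows $(-1,1,0,\ldots)$ and $(0,\ldots,0,1,-1)$ correspond precisely to the one-sided forms $\frac{n^2}{\pi^2}(w(x_2)-w(x_1))$ and $\frac{n^2}{\pi^2}(w(x_{n-1})-w(x_n))$. Part (3) then follows immediately: both sides reduce, via piecewise-constancy of $u\circ\kappa_n$ and $v\circ\kappa_n$, to $\frac{\pi}{n}\langle A_n\vec{u},\vec{v}\rangle$ and $\frac{\pi}{n}\langle\vec{u},A_n\vec{v}\rangle$ respectively, which are equal by the symmetry of $A_n$.

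For (4), the interior estimate $|\Delta_n w(x_k)|\le K\|w''\|_{\mbf C(\OO)}$ is the standard second-order Taylor remainder obtained by expanding $w(x_{k\pm 1})$ around $x_k$. For $k=1$ and $k=n$, the one-sided quotient $\frac{n^2}{\pi^2}(w(x_2)-w(x_1))$ can be rewritten using the Neumann condition $w'(0)=0$ via Taylor expansion of $w$ around $0$, again producing an expression bounded by $\|w''\|_{\mbf C(\OO)}$. The biharmonic bound reduces to the second-order one applied twice: the hypothesis that $w$ is constant on $[0,x_4]$ and $[x_{n-3},\pi]$ guarantees that $\Delta_n w$ vanishes on a neighborhood of each endpoint and is itself $\mbf C^2$ with Neumann-type flat behavior, so the first bound may be reapplied to $\Delta_n w$.

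For (5), I would derive a discrete Leibniz identity valid at each interior $x_k$:
\begin{align*}
u_{k+1}v_{k+1}-2u_kv_k+u_{k-1}v_{k-1}=v_k(u_{k+1}-2u_k+u_{k-1})+u_k(v_{k+1}-2v_k+v_{k-1})+R_k,
\end{align*}
with remainder $R_k=(u_{k+1}-u_k)(v_{k+1}-v_k)+(u_{k-1}-u_k)(v_{k-1}-v_k)$. Dividing by $h^2$, the first two terms contribute $v(x_k)\Delta_n u(x_k)+u(x_k)\Delta_n v(x_k)$, while the Lipschitz hypothesis gives $|R_k|/h^2\le 2[u]_{\mbf C^1(\OO)}[v]_{\mbf C^1(\OO)}$. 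Squaring, integrating over $\OO$, and using $(a+b+c)^2\le 3(a^2+b^2+c^2)$ together with $\|u\|_{\mbf C(\OO)}$, $\|v\|_{\mbf C(\OO)}$ taken out of the corresponding integrals yields the stated bound. The boundary cells require a separate one-sided splitting $u(x_2)v(x_2)-u(x_1)v(x_1)=v(x_1)(u(x_2)-u(x_1))+u(x_2)(v(x_2)-v(x_1))$, which after dividing by $h^2$ is even cleaner and is absorbed in the same bound up to constants.

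The main obstacle I anticipate is the boundary handling in (4): verifying that after one application of $\Delta_n$ the resulting function retains enough regularity and Neumann-type flatness near $\{0,\pi\}$ to admit a second application in the $\mbf C$-norm. The assumption that $w$ is constant on the four outermost cells on each side is precisely tailored to make this two-step argument go through, but one must unpack the piecewise definition of $\Delta_n$ carefully at the endpoints to confirm no hidden boundary terms appear. The remaining items are straightforward algebraic or Taylor-type computations.
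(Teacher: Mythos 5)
Items (1)--(3) and (5) are correct and take essentially the paper's route: (1) is termwise differentiation in $t$ combined with the eigenvalue identity \eqref{Dnphi} (plus the observation that $\phi_{j,n}(\kappa_n(x))=\phi_j(\kappa_n(x))$ because $\kappa_n(x)$ is a grid node); (2)--(3) reduce to the entries and the symmetry of $A_n$ exactly as in the paper; and your discrete Leibniz splitting in (5), with remainder $(u_{k+1}-u_k)(v_{k+1}-v_k)+(u_{k-1}-u_k)(v_{k-1}-v_k)$, is an equivalent variant of the paper's decomposition $u(x_{k-1})\Delta_n v(x_k)+v(x_k)\Delta_n u(x_k)+\frac{n^2}{\pi^2}(u(x_{k+1})-u(x_{k-1}))(v(x_{k+1})-v(x_k))$, giving the same $[u]_{\mbf C^1(\OO)}[v]_{\mbf C^1(\OO)}$ contribution after dividing by $h^2$. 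The first half of (4) (interior Taylor estimate, one-sided Taylor at the ends using $w'(0)=w'(\pi)=0$) also matches the paper.

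The genuine gap is in the biharmonic bound of (4). You propose to ``reapply the first bound to $\Delta_n w$'', asserting that $\Delta_n w$ is $\mbf C^2$ with Neumann-type flat behavior. It is not: by construction $\Delta_n w(x)$ depends only on $\kappa_n(x)$, so $\Delta_n w$ is a piecewise constant step function, the estimate $\|\Delta_n v\|_{\mbf C(\OO)}\le K\|v''\|_{\mbf C(\OO)}$ cannot be applied with $v=\Delta_n w$, and the second difference quotient of the grid sequence $j\mapsto\Delta_n w(x_j)$ is not controlled by any ``second derivative of $\Delta_n w$''. The argument that does work, and is what the paper uses, is to write out the composed stencil: for $x\in[\frac{(k-1)\pi}{n},\frac{k\pi}{n})$ with $3\le k\le n-2$ one has $\Delta_n^2 w(x)=\frac{n^4}{\pi^4}\big(w(x_{k+2})-4w(x_{k+1})+6w(x_k)-4w(x_{k-1})+w(x_{k-2})\big)$, which a fourth-order Taylor expansion bounds by $K\|w^{(4)}\|_{\mbf C(\OO)}$; at the remaining cells the flatness hypothesis gives $\Delta_n w(x_i)=0$ for $i=1,2,3,n-2,n-1,n$, hence $\Delta_n^2 w(x_i)=0$ for $i=1,2,n-1,n$. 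Your two-step idea can be repaired, but only via the sharper expansion $\Delta_n w(x_j)=w''(x_j)+O(n^{-2})\|w^{(4)}\|_{\mbf C(\OO)}$ at interior nodes, so that the second difference quotient of $\Delta_n w$ splits into a second difference of $w''$ (bounded via Taylor by $\|w^{(4)}\|_{\mbf C(\OO)}$) plus remainders whose $n^{-2}$ factor cancels the $\frac{n^2}{\pi^2}$; this quantitative refinement is the missing ingredient and does not follow from the $\mbf C^2$ bound alone.
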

\begin{proof}
	(1) This conclusion is a direct result of \eqref{Dnphi}.
	
	(2) The conclusion follows from the definitions of $A_n$ and
	$\Delta_n$.
	
	(3) Let $\vec{u}=(u(x_1),\ldots,u(x_n))^\top$ and $\vec{v}=(v(x_1),\ldots,v(x_n))^\top$. It follows from the second conclusion and the symmetry of $A_n$ that
	\begin{align*}
		\int_\OO \Delta_nu(x)v(\kappa_n(x))\ud x=\frac{\pi}{n}\sum_{k=1}^n\Delta_n u(x_k)v(x_k)=\frac{\pi}{n}\langle A_n\vec u,\vec v\rangle=\frac{\pi}{n}\langle \vec u,A_n\vec v\rangle=\int_\OO u(\kappa_n(x))\Delta_nv(x)\ud x.
	\end{align*}
	
	(4) Suppose that $w\in\mbf C^2(\OO;\mbb R)$ with $w'(0)=w'(\pi)=0$. For $x\in[\frac{(k-1)\pi}{n},\frac{k\pi}{n})$, $k=2,\ldots,n-1$, $\Delta_n w(x)=\frac{n^2}{\pi^2}\big(w(x_{k+1})-2w(x_k)+w(x_{k-1})\big)$. By the Taylor formula, it holds that $|\Delta_n w(x)|\le K\|w''\|_{\mbf C(\OO)}$.  For  $x\in[0,\frac{\pi}{n})$,
	$\Delta_n w(x)=\frac{n^2}{\pi^2}\big(w(x_2)-w(x_{1})\big)$. Using the Taylor formula and $w'(0)=0$ yields that for some $\eta\in(x_1,x_2)$,
	$$|\Delta_n w(x)|=\frac{n^2}{\pi^2}\Big|w'(x_1)\frac{\pi}{n}+\frac{1}{2}w''(\eta)\frac{\pi^2}{n^2}\Big|\le \frac{n}{\pi}|w'(x_1)-w'(0)|+\frac{1}{2}|w''(\eta)|\le K\|w''\|_{\mbf C(\OO)}.$$
	Similarly, one has $|\Delta_n w(x)|\le K\|w''\|_{\mbf C(\OO)}$ for $x\in[\frac{(n-1)\pi}{n},\pi]$.
	
	Next suppose that $w\in\mbf C^4(\OO;\mbb R)$ and $w(x)=w(0)$ for $x\in[0,x_4]$, and $w(x)=w(\pi)$ for $x\in[x_{n-3},\pi]$. A direct computation gives that for $x\in[\frac{(k-1)\pi}{n},\frac{k\pi}{n})$, $3\le k\le n-2$, 
	$$\Delta_n^2 w(x)=\frac{n^4}{\pi^4}\big(w(x_{k+2})-4w(x_{k+1})+6w(x_k)-4w(x_{k-1})+w(x_{k-2})\big).$$
	Again using the Taylor formula, we arrive at $|\Delta_n^2 w(x)|\le K\|w^{(4)}\|_{\mbf C(\OO)}$. It follows from $w(x)=w(0)$ for $x\in[0,x_4]$ and $w(x)=w(\pi)$ for $x\in[x_{n-3},\pi]$ that $\Delta_n w(x_i)=0$, for $i=1,2,3,n-2,n-1,n$. Thus, $\Delta_n^2 w(x_i)=0$ for $i=1,2,n-1,n$. In this way, $\|\Delta_{n}^2 w\|_{\mbf C(\OO)}\le K\|w^{(4)}\|_{\mbf C(\OO)}$.
	
	(5) By the definition of $\Delta_n$,
	\begin{align*}
		\Delta_n(uv)(x_1)&=\frac{n^2}{\pi^2}(u(x_2)v(x_2)-u(x_1)v(x_1))
		=\frac{n^2}{\pi^2}\big[(u(x_2)-u(x_1))v(x_2)+u(x_1)(v(x_2)-v(x_1))\big]\nonumber\\
		&=v(x_2)\Delta_{n}u(x_1)+u(x_1)\Delta_{n}v(x_1).
	\end{align*}
	Thus, \begin{align}\label{x1}
		|\Delta_n(uv)(x_1)|\le \|v\|_{\mbf C(\OO)}|\Delta_{n} u(x_1)|+ \|u\|_{\mbf C(\OO)}|\Delta_{n} v(x_1)|.
	\end{align}
	Similarly, one has
	\begin{align}\label{xn}
		|\Delta_n(uv)(x_n)|\le \|v\|_{\mbf C(\OO)}|\Delta_{n} u(x_n)|+ \|u\|_{\mbf C(\OO)}|\Delta_{n} v(x_n)|.
	\end{align}
	For $k=2,\ldots,n-1$, a direct computation gives
	\begin{align*}
		\Delta_{n} (uv)(x_k)&=\frac{n^2}{\pi^2}\big(u(x_{k+1})v(x_{k+1})-2u(x_k)v(x_k)+u(x_{k-1})v(x_{k-1})\big)\\
		&=u(x_{k-1})\Delta_{n} v(x_k)+v(x_k)\Delta_{n} u(x_k)+\frac{n^2}{\pi^2}(u(x_{k+1})-u(x_{k-1}))(v(x_{k+1})-v(x_k)).
	\end{align*}
	By the definition the semi-norm $[\cdot]_{\mbf C^1(\OO)}$, we obtain that for $k=2,\ldots,n-1$,
	\begin{align}\label{xk}
		|\Delta_{n} (uv)(x_k)|\le \|u\|_{\mbf C(\OO)}|\Delta_{n} v(x_k)|+\|v\|_{\mbf C(\OO)}|\Delta_{n} u(x_k)|+2[u]_{\mbf C^1(\OO)}[v]_{\mbf C^1(\OO)}.
	\end{align}
	Combining \eqref{x1}-\eqref{xn} yields
	\begin{align*}
		\|\Delta_{n}(uv)\|_{\mbf L^2(\OO)}^2&=\int_\OO|\Delta_{n}(uv)(x)|^2\ud x=\frac{\pi}{n}\sum_{k=1}^n|\Delta_n(uv)(x_k)|^2\\
		&\le K\|u\|_{\mbf C(\OO)}^2\frac{\pi}{n}\sum_{k=1}^n|\Delta_{n}v(x_k)|^2+K\|v\|_{\mbf C(\OO)}^2\frac{\pi}{n}\sum_{k=1}^n|\Delta_{n}u(x_k)|^2+K[u]^2_{\mbf C^1(\OO)}[v]^2_{\mbf C^1(\OO)}\\
		&= K\|u\|_{\mbf C(\OO)}^2\|\Delta_{n}v\|^2_{\mbf L^2(\OO)}+K\|v\|_{\mbf C(\OO)}^2\|\Delta_{n}u\|^2_{\mbf L^2(\OO)}+K[u]^2_{\mbf C^1(\OO)}[v]^2_{\mbf C^1(\OO)}.
	\end{align*}
	Thus, the last conclusion holds and the proof is complete. 
\end{proof}

Lemmas \ref{unbounded}-\ref{erroroder} establish the uniform boundedness and equi-continuity of $\Upsilon^n(\mbb S_a)$, and the locally uniform convergence of $\Upsilon^n$.   
\begin{lem}\label{unbounded}
	Let Assumptions \ref{assum1}-\ref{assum2} hold. Then	for any $a\ge0$, 
	\begin{align*}
		\sup_{h\in\mbb S_a}\sup_{n\ge1}\big(\|\Upsilon^n(h)\|_{\mbf C(\OO_T)}+\|\Delta_n \Upsilon^n(h)\|_{\mbf L^2(\OO_T)}\big)\le K(a,T).
	\end{align*}
\end{lem}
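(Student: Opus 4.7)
The plan is to combine an $L^2$-type energy estimate for the \emph{equivalent ODE form} of the discrete skeleton equation with the mild-form representation and a discrete Gagliardo--Nirenberg inequality to upgrade to the supremum bound.

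First, I would exploit the equivalence established in the proof of Theorem \ref{LDPofun}: for $f=\Upsilon^n(h)$, the vector of nodal values $\vec f(t)=(f(t,x_1),\ldots,f(t,x_n))^\top$ satisfies the ODE
\begin{align*}
\dot{\vec f}(t)=-A_n^2\vec f(t)+A_nB_n(\vec f(t))+\sqrt{n/\pi}\,\Sigma_n(\vec f(t))q(t),\qquad \int_0^T|q(t)|^2\ud t=\|h\|_{\mbf L^2(\OO_T)}^2\le a^2.
\end{align*}
Testing against $\vec f$ in the rescaled inner product $\LL\cdot,\cdot\RR_n:=\tfrac{\pi}{n}\LL\cdot,\cdot\RR_{\mbb R^n}$ yields
\begin{align*}
\tfrac{1}{2}\tfrac{d}{dt}\|\vec f\|_n^2+\|A_n\vec f\|_n^2=\LL A_n\vec f,B_n(\vec f)\RR_n+\sqrt{n/\pi}\,\LL\vec f,\Sigma_n(\vec f)q\RR_n.
\end{align*}
For the nonlinear term I would invoke the discrete summation-by-parts of Proposition \ref{Laplacen}(3) and the elementary inequality $(a-b)(a^3-b^3)\ge 0$ to obtain $\LL A_n\vec f,(f_k^3)_k\RR_n\le 0$; combined with the identity $\LL A_n\vec f,\vec f\RR_n=-\|\nabla_n\vec f\|_n^2$ and Young's inequality, this gives $\LL A_n\vec f,B_n(\vec f)\RR_n\le \tfrac12\|A_n\vec f\|_n^2+\tfrac12\|\vec f\|_n^2$. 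The control term is handled by Cauchy--Schwarz and the boundedness of $\sigma$. Gronwall's lemma then produces
\begin{align*}
\sup_{t\in[0,T]}\|\vec f(t)\|_n^2+\int_0^T\|A_n\vec f(t)\|_n^2\ud t\le K(a,T),
\end{align*}
which, because $\Delta_n f$ is piecewise constant on the grid with values $(A_n\vec f)_k$, is exactly the bound $\sup_t\|f(t,\cdot)\|_{\mbf L^2(\OO)}^2+\|\Delta_n f\|_{\mbf L^2(\OO_T)}^2\le K(a,T)$ claimed in the second half of the statement.

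Second, I would upgrade to the sup-norm bound via the mild form \eqref{DisSkeleton}. The initial-datum piece is dominated by $K\|u_0\|_{\mbf C(\OO)}$ from the integrability of $G^n$. The control piece is bounded by Cauchy--Schwarz and Proposition \ref{disGreen}(1), $\big|\int_0^t\int_\OO G^n_{t-s}\sigma(f)h\,\ud y\ud s\big|\le \|\sigma\|_\infty(\int_0^T\int_\OO|G^n|^2)^{1/2}\|h\|_{\mbf L^2(\OO_T)}\le K(a,T)$. For the nonlinear term I would use a discrete analog of Proposition \ref{Green}(2), giving
\begin{align*}
\Big|\int_0^t\int_\OO \Delta_{n,y}G^n_{t-s}(x,y)b(f(s,\kappa_n(y)))\ud y\ud s\Big|\le K\int_0^t(t-s)^{-3/4}\|b(f(s,\cdot))\|_{\mbf L^1(\OO)}\ud s,
\end{align*}
and then apply the discrete Gagliardo--Nirenberg inequality (the \emph{discrete interpolation inequality} flagged in the introduction), which in one dimension takes the form $\|u\|_{\mbf L^3(\OO)}^3\le K\|u\|_{\mbf L^2(\OO)}^{5/2}(\|u\|_{\mbf L^2(\OO)}+\|\Delta_nu\|_{\mbf L^2(\OO)})^{1/2}$. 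Together with the $\mbf L^2$ bound from Step 1, this yields $\|b(f(s,\cdot))\|_{\mbf L^1(\OO)}\le K(1+\|\Delta_nf(s,\cdot)\|_{\mbf L^2(\OO)}^{1/2})$. Since $(t-s)^{-3/4}\in\mbf L^{p}((0,T))$ for every $p<4/3$, H\"older's inequality and the bound $\Delta_nf\in\mbf L^2(\OO_T)$ from Step 1 give a uniform bound on the nonlinear term, completing the proof.

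The main obstacle is Step 1, specifically the discrete summation-by-parts manipulation of the cubic term $\LL A_n\vec f,(f_k^3)_k\RR_n$. This is where the \emph{equivalent characterization of the skeleton equation} referenced in the introduction is indispensable: the mild form \eqref{DisSkeleton} offers no access to such a cancellation, whereas the ODE form allows one to exploit the discrete counterpart of $\int(\Delta u)u^3=-3\int|\nabla u|^2u^2\le 0$, thereby turning the bad non-Lipschitz nonlinearity into a sign-favorable term. A subsidiary delicate point is choosing the exponent in the Gagliardo--Nirenberg inequality in Step 2 small enough on $\|\Delta_nf\|_{\mbf L^2(\OO)}$ that the time-integral against the singular kernel $(t-s)^{-3/4}$ stays finite while using only the information from Step 1.
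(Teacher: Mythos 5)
Your Step 1 is sound and is essentially the paper's route: the paper also passes to the nodal ODE (via the equivalence in the proof of Theorem \ref{LDPofun}) and proves the $\mbf L^\infty_t l^2_n\cap \mbf L^2_t$-of-$\|A_n\cdot\|_{l_n^2}$ bound in Proposition \ref{Vbound}; your version is a legitimate streamlining, since the sign condition $\langle \vec f,A_n(\vec f^3)\rangle\le 0$ (cf.\ \eqref{xBf}) plus absorbing $\|(-A_n)^{1/2}\vec f\|_{l_n^2}^2\le\tfrac12\|A_n\vec f\|_{l_n^2}^2+\tfrac12\|\vec f\|_{l_n^2}^2$ lets Gronwall close directly, whereas the paper first runs a negative-norm estimate with the test function $(-\dot A_n)^{-1}V$ to control $\int_0^T\|V\|_{l_n^4}^4$. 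The genuine gap is in Step 2. With your interpolation $\|u\|_{\mbf L^3}^3\le K\|u\|_{\mbf L^2}^{5/2}(\|u\|_{\mbf L^2}+\|\Delta_n u\|_{\mbf L^2})^{1/2}$ you get $\|b(f(s,\cdot))\|_{\mbf L^1(\OO)}\le K(a,T)(1+\|\Delta_n f(s,\cdot)\|_{\mbf L^2(\OO)}^{1/2})$, and you must then bound $\int_0^t(t-s)^{-3/4}\|\Delta_n f(s,\cdot)\|_{\mbf L^2(\OO)}^{1/2}\ud s$ using only $\|\Delta_n f\|_{\mbf L^2(\OO_T)}\le K(a,T)$. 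This H\"older pairing is exactly critical and fails: $s\mapsto\|\Delta_n f(s,\cdot)\|_{\mbf L^2}^{1/2}$ lies only in $\mbf L^4(0,T)$, which forces the kernel into $\mbf L^{4/3}$, while $(t-s)^{-3/4}\in \mbf L^{q}(0,T)$ only for $q<4/3$. So the "subsidiary delicate point" you flag is not resolved by the stated exponents. The repair is to use the sharp one-dimensional interpolation, e.g.\ $\|f\|_{\mbf L^3}\le\|f\|_{\mbf L^2}^{1/2}\|f\|_{\mbf L^6}^{1/2}$ together with the discrete inequality \eqref{l6h2}, which gives $\|b(f(s,\cdot))\|_{\mbf L^1(\OO)}\le K(a,T)(1+\|\Delta_n f(s,\cdot)\|_{\mbf L^2(\OO)}^{1/4})$; then $(t-s)^{-3/4}\in\mbf L^{8/7}$ pairs with $\|\Delta_n f\|_{\mbf L^2}^{1/4}\in\mbf L^8(0,T)$ and the integral closes.

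Two further points need justification rather than assertion. First, the "discrete analog of Proposition \ref{Green} (2)" is not among the stated estimates (Proposition \ref{disGreen} gives increments and error bounds only); it does hold uniformly in $n$, but you should verify it, e.g.\ from the expansion $\Delta_{n,y}G^n_t(x,y)=\sum_{j=0}^{n-1}\lambda_{j,n}e^{-\lambda_{j,n}^2t}\phi_{j,n}(x)\phi_j(\kappa_n(y))$ and $|\lambda_{j,n}|\ge \tfrac{4}{\pi^2}j^2$, which yield $\sup_{x,y}|\Delta_{n,y}G^n_t(x,y)|\le Ct^{-3/4}$. Second, the initial-datum term is not controlled by "integrability of $G^n$": the discrete biharmonic kernel has no sign and no maximum principle, so bound it instead as the paper does, via $\|e^{-A_n^2t}U^n(0)\|_{l_n^\infty}\le K\|U^n(0)\|_{l_n^2}+\|(-A_n)^{1/2}U^n(0)\|_{l_n^2}\le K$, using \eqref{l2H1} and the $\mbf C^1$ bound on $u_0$ from Assumption \ref{assum2}. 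With these repairs your mild-form bootstrap is a viable, somewhat more direct alternative to the paper's Steps 3--4 (which go through an $l_n^6$ bound with the smoothing estimate of $e^{-A_n^2 t}$, then an $H^1$-type bound, then the embedding \eqref{l2H1}); as written, however, the critical H\"older step and the two unproved kernel facts leave the argument incomplete.
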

\begin{proof}
	In this proof, we will use some notations in the proof of Theorem \ref{LDPofun}.	
	Set $h\in\mbb S_a$, $a\ge0$ and $f=\Upsilon^n(h)$. 
	Let $\tilde{h}$ be  given as in \eqref{htilde}.
	Then 
	$\|\tilde{h}\|_{\mbf L^2(\OO_T)}\le \|h\|_{\mbf L^2(\OO_T)}\le a$ and $\Upsilon^n(\tilde{h})=f$, as is shown in the proof of Theorem \ref{LDPofun}. Define $\theta$ as in \eqref{theta}, and denote $\bm{h}=\theta^{-1}(h)$. Then we have $\Upsilon^n(\theta(\bm{h}))=f$, which is equivalent to
	$\mcal A(\bm{h})=\vec{f}$ with $\vec{f}(t)=(f(t,x_1),\cdots,f(t,x_n))$; see the proof of Theorem \ref{LDPofun}. Thus, one has
	\begin{align*}
		\vec{f}(t)=U^n(0)-\int_0^tA_n^2\vec f(s)\ud s+\int_0^tA_nB_n(\vec f(s))\ud s+\sqrt{\frac{n}{\pi}}\int_0^t\Sigma_n(\vec f(s))\bm{h}(s)\ud s,\quad t\in[0,T].
	\end{align*}
	By \eqref{thetanorm}, it holds that $\|\bm{h}\|_{\mbf L^2(0,T)}=\|\tilde{h}\|_{\mbf L^2(\OO_T)}\le a$. Thus from  Proposition \ref{Vbound} we conclude
	\begin{align}\label{eq10}
		\int_0^T\|A_n\vec{f}(t)\|_{l_n^2}^2\ud t+\sup_{t\in[0,T]}\|\vec{f}(t)\|_{l_n^\infty}\le K(a,T),
	\end{align}
	where  the definitions of the norms $\|\cdot\|_{l_n^2}$ and $\|\cdot\|_{l_n^\infty}$ are given in Appendix B.
	This implies that $\|f\|_{\mbf C(\OO_T)}=\sup_{t\in[0,T]}\|\vec{f}(t)\|_{l_n^\infty}\le K(a,T)$. In addition, according to Proposition \ref{Laplacen} (2) and \eqref{eq10},
	\begin{align*}
		\|\Delta_n f\|_{\mbf L^2(\OO_T)}^2=\int_0^T\int_\OO |\Delta_n f(t,x)|^2\ud x\ud t=\int_0^T\frac{\pi}{n}\sum_{k=1}^n|\Delta_n f(t,x_k)|^2\ud t=	\int_0^T\|A_n\vec{f}(t)\|_{l_n^2}^2\ud t\le K(a,T),
	\end{align*}
	which finishes the proof. 	
\end{proof}

\begin{lem}\label{UpsilonNholder}
	Let Assumptions \ref{assum1}-\ref{assum2} hold and $u_0\in\mbf C^2(\OO;\mbb R)$. Then for any $a\ge 0$ and $\alpha\in(0,1)$, there is $K(a,T,\alpha)>0$ such that for any $(t,x),(s,y)\in\OO_T$,
	\begin{align*}
		\sup_{h\in \mbb S_a}\sup_{n\ge1}|\Upsilon^n(h)(t,x)-\Upsilon^n(s,y)|\le K(a,T,\alpha)(|x-y|+|t-s|^{\frac{3\alpha}{8}}).
	\end{align*}	
\end{lem}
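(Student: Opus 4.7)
The plan is to mirror the proof of Proposition \ref{boundandHol}(2). Writing $f := \Upsilon^n(h)$, the mild form \eqref{DisSkeleton} gives a decomposition $f = f_0 + f_1 + f_2$ corresponding to the initial-datum, drift and control terms, and I estimate each summand separately, with the discrete Green-function estimates of Proposition \ref{disGreen} playing the role that \eqref{spaceHol}--\eqref{timeHol1} played in the continuous case.

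For the drift term $f_1(t,x) = \int_0^t \int_\OO \Delta_{n,y} G^n_{t-s}(x,y) b(f(s, \kappa_n(y))) \, \ud y \, \ud s$, the uniform bound $\|f\|_{\mbf C(\OO_T)} \le K(a,T)$ supplied by Lemma \ref{unbounded} together with the cubic growth of $b$ yields $\sup_{n,\,h\in\mbb S_a} \|b(f(\cdot,\kappa_n(\cdot)))\|_{\mbf L^\infty(\OO_T)} \le K(a,T)$; pulling this constant out and invoking the two inequalities of Proposition \ref{disGreen}(2) bounds the space and time increments of $f_1$ by $K(a,T,\alpha)(|x-y| + |t-s|^{3\alpha/8})$. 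For the control term $f_2(t,x) = \int_0^t \int_\OO G^n_{t-s}(x,y) \sigma(f(s,\kappa_n(y))) h(s,y) \, \ud y \, \ud s$, the boundedness of $\sigma$ (Assumption \ref{assum1}) together with the Cauchy--Schwarz inequality and $\|h\|_{\mbf L^2(\OO_T)} \le a$ reduces the estimate to the two $\mbf L^2$ inequalities of Proposition \ref{disGreen}(1), again producing the desired bound.

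The only genuinely new piece is the initial-datum term $f_0(t,x) = \int_\OO G^n_t(x,y) u_0(\kappa_n(y)) \, \ud y$, which is $h$-independent and must be controlled using properties of the discrete semigroup. For the time increment, Proposition \ref{Laplacen}(1) gives $\partial_t G^n_t = -\Delta_{n,y}^2 G^n_t$, and applying the discrete integration-by-parts formula (Proposition \ref{Laplacen}(3)) one obtains
\begin{align*}
f_0(t,x) - f_0(s,x) = -\int_s^t \int_\OO \Delta_{n,y} G^n_r(x,y) \, \Delta_n u_0(\kappa_n(y)) \, \ud y \, \ud r;
\end{align*}
since $u_0 \in \mbf C^2(\OO;\mbb R)$ with $u_0'(0) = u_0'(\pi) = 0$, Proposition \ref{Laplacen}(4) yields $\|\Delta_n u_0\|_{\mbf C(\OO)} \le K\|u_0''\|_{\mbf C(\OO)}$, and the time half of Proposition \ref{disGreen}(2) (after the substitution $r \mapsto t-r$) produces $|f_0(t,x) - f_0(s,x)| \le K(T,\alpha)|t-s|^{3\alpha/8}$. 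For the space increment, observe that $f_0(t,\cdot)$ is piecewise linear in $x$ because each $\phi_{j,n} = \Pi_n \phi_j$ is, so the Lipschitz constant of $f_0(t,\cdot)$ equals $\max_{k} h^{-1}|f_0(t,x_{k+1}) - f_0(t,x_k)|$; this is the $l_n^\infty$-norm of the discrete forward-difference applied to $e^{-A_n^2 t} \vec u_0$, and it is controlled uniformly in $n$ and $t$ by commuting the forward-difference operator with $A_n$ (the boundary corrections vanish under $u_0'(0)=u_0'(\pi)=0$) and using the $l_n^\infty$-contractivity of the discrete biharmonic semigroup together with the $\mbf C^1$-bound on $u_0$.

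The step I expect to be most delicate is the uniform-in-$n$ Lipschitz estimate for $f_0(t,\cdot)$: the $\mbf L^2$-integrated bounds of Proposition \ref{disGreen}(1) do not supply any pointwise-in-time gradient information, and the commutation argument above requires a discrete Gaussian-type heat-kernel bound (uniform in $n$) for either $\partial_x G^n_t(x,y)$ or for the first-difference of $e^{-A_n^2 t}$ acting on $l_n^\infty$, analogous to the continuous estimate of Proposition \ref{Green}(1). Establishing such a bound, or alternatively passing through the continuous analog via Proposition \ref{disGreen}(3) together with the continuous Lipschitz bound on $\int_\OO G_t(x,y)u_0(y)\,\ud y$ from \cite[Lemma 2.3]{Cardon2001}, is the only part of the argument that is not a direct transcription of the continuous proof.
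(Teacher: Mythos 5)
Your treatment of the drift and control terms matches the paper's (uniform bound from Lemma \ref{unbounded} plus the cubic growth of $b$ and Proposition \ref{disGreen}(2) for the drift; boundedness of $\sigma$, Cauchy--Schwarz and Proposition \ref{disGreen}(1) for the control term), and your time-increment estimate for the initial-datum term is exactly the paper's device: write $G^n_t=G^n_0+\int_0^t\partial_r G^n_r\,\ud r$, use $\partial_r G^n_r=-\Delta_{n,y}^2G^n_r$ and the discrete integration by parts of Proposition \ref{Laplacen}(3) to move one $\Delta_n$ onto $u_0$, then invoke $\|\Delta_n u_0\|_{\mbf C(\OO)}\le K\|u_0''\|_{\mbf C(\OO)}$ and the time half of Proposition \ref{disGreen}(2). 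The gap is in your space-increment estimate for the initial-datum term. The mechanism you propose --- commuting the forward difference with $A_n$ and appealing to ``$l_n^\infty$-contractivity of the discrete biharmonic semigroup'' --- is unjustified: $e^{-A_n^2t}$ is generated by the \emph{square} of the discrete Laplacian, it is not positivity preserving, there is no maximum principle, and hence no contraction (nor any obvious $n$-uniform bound) on $l_n^\infty$ without a separate discrete kernel estimate; you yourself flag this as the missing piece, and the fallback through Proposition \ref{disGreen}(3) is also left unexecuted (and would still need an argument converting an $O(n^{-1})$ sup-error into an $O(|x-y|)$ increment, e.g.\ via the piecewise linearity at mesh scale).

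The point you missed is that the identity you already derived for the time increment closes the space increment as well, with no new semigroup estimate. Exactly as in the paper, Proposition \ref{Laplacen}(1),(3) together with \eqref{Gn0} give, for all $(t,x)\in\OO_T$,
\begin{align*}
\int_\OO G^n_t(x,z)u_0(\kappa_n(z))\,\ud z=\Pi_n(u_0)(x)-\int_0^t\int_\OO \Delta_{n,z}G^n_s(x,z)\,\Delta_n u_0(z)\,\ud z\,\ud s ,
\end{align*}
so that
\begin{align*}
|f_0(t,x)-f_0(t,y)|\le \|\Delta_n u_0\|_{\mbf C(\OO)}\int_0^t\int_\OO\big|\Delta_{n,z}G^n_s(x,z)-\Delta_{n,z}G^n_s(y,z)\big|\,\ud z\,\ud s+\big|\Pi_n(u_0)(x)-\Pi_n(u_0)(y)\big|,
\end{align*}
and the first term is $\le K\|u_0''\|_{\mbf C(\OO)}|x-y|$ by the spatial half of Proposition \ref{disGreen}(2), while $\Pi_n(u_0)$ is piecewise linear with slopes that are divided differences of $u_0$, hence Lipschitz with constant $\le \|u_0'\|_{\mbf C(\OO)}$ uniformly in $n$. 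With this substitution your argument coincides with the paper's proof and is complete; as written, the uniform-in-$n$ Lipschitz bound for $f_0(t,\cdot)$ remains unproved.
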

\begin{proof}
	Let $h\in\mbb S_a$, $a\ge0$ and denote $f:=\Upsilon^n(h)$. Then $f(t,x)=f_1(t,x)+f_2(t,x)+f_3(t,x)$, $(t,x)\in\OO_T$, where
	$f_1(t,x)=\int_\OO G^n_t(x,z)u_0(\kappa_n(z))\ud z$, $f_2(t,x)=\int_0^t\int_\OO \Delta_{n,z} G^n_{t-s}(x,z)b(f(s,\kappa_n(z)))\ud z\ud s$,\\ and $f_3(t,x)=\int_0^t\int_\OO G^n_{t-s}(x,z)\sigma(f(s,\kappa_n(z)))h(s,z)\ud z\ud s$.
	It follows from the definition of $G^n$ and the fact that $\sqrt{\frac{\pi}{n}}(\phi_j(x_1),\ldots,\phi_j(x_n))^\top$, $j=0,1,\ldots,n-1$ forms  the orthonormal basis of $\mbb R^n$ that 
	\begin{align}\label{Gn0}
		\int_\OO G^n_0(x,z)w(\kappa_n(z))\ud z&=\sum_{j=0}^{n-1}\phi_{j,n}(x)\sum_{k=1}^n\phi_j(x_k)w(x_k)\frac{\pi}{n}\nonumber\\&=\sum_{j=0}^{n-1}\sqrt{\frac{\pi}{n}}\Pi_n(\phi_j)(x)\sum_{k=1}^n\sqrt{\frac{\pi}{n}}\phi_j(x_k)w(x_k)=\Pi_n(w)(x)
	\end{align}	
	for any measurable function $w\in\OO\to\mbb R$.	
	By  (1) and (3) of Proposition \ref{Laplacen} and \eqref{Gn0},
	\begin{align*}
		f_1(t,x)&=\int_\OO\int_0^t\frac{\PD}{\PD s}G^n_s(x,z)u_0(\kappa_n(z))\ud s\ud z+\int_\OO G^n_0(x,z)u_0(\kappa_n(z))\ud z\\
		&=-\int_0^t\int_\OO \Delta_{n,z}G^n_s(x,z)\Delta_{n}u_0(z)\ud z\ud s+\Pi_n(u_0)(x).
	\end{align*}	
	Further, we have $\|\Delta_n u_0\|_{\mbf C(\OO)}\le K\|u_0''\|_{\mbf C(\OO)}$ due to Assumption \ref{assum2}, $u_0\in\mbf C^2(\OO;\mbb R)$ and Proposition \ref{Laplacen} (4). Thus, Proposition \ref{disGreen} (2) yields that for any $\alpha\in(0,1)$ and $(t,x),(s,y)\in\OO_T$ with $t\ge s$,
	\begin{align*}
		&\quad~|f_1(t,x)-f_1(s,y)|\le |f_1(t,x)-f_1(t,y)|+|f_1(t,y)-f_1(s,y)|\\
		&\le \int_0^t\int_\OO|\Delta_{n,z}G^n_s(x,z)-\Delta_{n,z}G^n_s(y,z)|\ud z\ud s+K\|u_0'\|_{\mbf C(\OO)}|x-y|+K\|u_0''\|_{\mbf C(\OO)}\int_s^t\int_\OO|\Delta_{n,z} G^n_r(y,z)|\ud z\ud s\\
		&\le K(|x-y|+|t-s|^{\frac{3\alpha}{8}}).
	\end{align*}
	Similarly to the proof of Proposition \ref{boundandHol} (2), one can show that for any $a\in(0,1)$ and $(t,x),(s,y)\in\OO_T$,
	\begin{align*}
		|f_i(t,x)-f_i(s,y)|\le K(a,T,\alpha)(|x-y|+|t-s|^{\frac{3\alpha}{8}}),~i=2,3.
	\end{align*}
	Thus the proof is complete.	 
\end{proof}

\begin{lem}\label{erroroder}
	Let Assumptions \ref{assum1}-\ref{assum2} hold and $u_0\in\mbf C^3(\OO;\mbb R)$. Then for any $a\ge 0$,
	$$\sup_{h\in\mbb S_a}\|\Upsilon^n(h)-\Upsilon(h)\|_{\mbf C(\OO_T)}\le K(a,T)n^{-1}.$$
\end{lem}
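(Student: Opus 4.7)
Write $u:=\Upsilon(h)$, $u^n:=\Upsilon^n(h)$, and $\Psi(t):=\sup_{s\in[0,t],\,x\in\OO}|u^n(s,x)-u(s,x)|$. The plan is to decompose $u^n-u$ as $D_0+D_b+D_\sigma$, where the three pieces come from the initial-data, drift, and $\sigma h$ convolutions in \eqref{eq:Skeleton} and \eqref{DisSkeleton}; to bound each by $K(a,T)n^{-1}$ plus a weakly singular convolution against $\Psi$; and to close the estimate by a generalized Gronwall inequality for weakly singular kernels.

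For $D_0(t,x)=\int_\OO G^n_t(x,y)u_0(\kappa_n(y))\ud y-\int_\OO G_t(x,y)u_0(y)\ud y$, I would use $\tfrac{\PD}{\PD s}G^n_s=-\Delta_{n,y}^2G^n_s$ (Proposition \ref{Laplacen}(1)), the discrete integration by parts in Proposition \ref{Laplacen}(3), and the identity $\int_\OO G^n_0(x,y)w(\kappa_n(y))\ud y=\Pi_n(w)(x)$ from \eqref{Gn0}, together with the analogous continuous integration by parts using $u_0'(0)=u_0'(\pi)=0$, to rewrite
\begin{align*}
D_0(t,x)&=-\int_0^t\!\!\int_\OO\big(\Delta_{n,y}G^n_s(x,y)-\Delta G_s(x,y)\big)\Delta_n u_0(y)\ud y\ud s\\
&\quad-\int_0^t\!\!\int_\OO \Delta G_s(x,y)\big(\Delta_n u_0(y)-\Delta u_0(y)\big)\ud y\ud s + (\Pi_n u_0-u_0)(x).
\end{align*}
Since $u_0\in\mbf C^3(\OO;\mbb R)$, Proposition \ref{Laplacen}(4) gives $\|\Delta_n u_0\|_{\mbf C(\OO)}\le K\|u_0''\|_{\mbf C(\OO)}$; a boundary Taylor expansion exploiting the Neumann condition yields $\|\Delta_n u_0-\Delta u_0\|_{\mbf C(\OO)}\le Kn^{-1}$; and the polygonal-interpolation bound $\|\Pi_n u_0-u_0\|_{\mbf C(\OO)}\le Kn^{-1}$ is standard. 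Combining these with Proposition \ref{disGreen}(3) and the finiteness of $\int_0^T\!\!\int_\OO|\Delta G_s(x,y)|\ud y\ud s$ deduced from Proposition \ref{Green}(1) gives $\|D_0\|_{\mbf C(\OO_T)}\le K(T)n^{-1}$.

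For $D_b$ I would telescope
\begin{align*}
D_b(t,x)&=\int_0^t\!\!\int_\OO(\Delta_{n,y}G^n_{t-s}(x,y)-\Delta G_{t-s}(x,y))\,b(u^n(s,\kappa_n(y)))\ud y\ud s\\
&\quad+\int_0^t\!\!\int_\OO\Delta G_{t-s}(x,y)[b(u^n(s,\kappa_n(y)))-b(u(s,\kappa_n(y)))]\ud y\ud s\\
&\quad+\int_0^t\!\!\int_\OO\Delta G_{t-s}(x,y)[b(u(s,\kappa_n(y)))-b(u(s,y))]\ud y\ud s,
\end{align*}
and handle each piece by first Lipschitzifying $b$ via \eqref{fb-fa} using the uniform $\mbf C(\OO_T)$ bounds on $u,u^n$ provided by Proposition \ref{boundandHol}(1) and Lemma \ref{unbounded}: the first piece is $K(a,T)n^{-1}$ by Proposition \ref{disGreen}(3); the second is bounded by $K(a,T)\int_0^t(t-s)^{-3/4}\Psi(s)\ud s$ via Proposition \ref{Green}(2); the third is $K(a,T)n^{-1}$ because $|\kappa_n(y)-y|\le\pi/(2n)$ and $u$ is Lipschitz in $x$ by Proposition \ref{boundandHol}(2). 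An analogous three-piece split of $D_\sigma$, using boundedness and Lipschitzness of $\sigma$, Cauchy--Schwarz against $\|h\|_{\mbf L^2(\OO_T)}\le a$, the $L^2$ Green-function error in Proposition \ref{disGreen}(3), and the estimate $\int_\OO|G_{t-s}(x,y)|^2\ud y\le K(t-s)^{-1/4}$ extracted from Proposition \ref{Green}(1), yields
$$|D_\sigma(t,x)|\le K(a,T)n^{-1}+K(a,T)\Big(\int_0^t(t-s)^{-1/4}\Psi(s)^2\ud s\Big)^{1/2}.$$

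Summing the three bounds and taking the supremum in $(s,x)\in[0,t]\times\OO$ gives
$$\Psi(t)\le K(a,T)n^{-1}+K(a,T)\int_0^t(t-s)^{-3/4}\Psi(s)\ud s+K(a,T)\Big(\int_0^t(t-s)^{-1/4}\Psi(s)^2\ud s\Big)^{1/2}.$$
The main obstacle is that the $D_\sigma$ contribution is naturally quadratic in $\Psi$, so a direct singular Gronwall does not apply. I would resolve this by squaring, using $(A+B+C)^2\le 3(A^2+B^2+C^2)$, and applying Cauchy--Schwarz to the middle convolution to obtain
$$\Psi(t)^2\le K(a,T)n^{-2}+K(a,T)\int_0^t(t-s)^{-3/4}\Psi(s)^2\ud s,$$
which, because $3/4<1$, is handled by the generalized Gronwall inequality for weakly singular kernels. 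This yields $\Psi(T)^2\le K(a,T)n^{-2}$, hence $\Psi(T)\le K(a,T)n^{-1}$ uniformly in $h\in\mbb S_a$, as required.
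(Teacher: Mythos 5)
Your proposal is correct and follows essentially the same route as the paper: the same three-term decomposition (initial data handled via the discrete/continuous semigroup identities together with $\|\Delta_n u_0-u_0''\|_{\mbf C(\OO)}\le Kn^{-1}$ and $\|\Pi_n u_0-u_0\|_{\mbf C(\OO)}\le Kn^{-1}$, the drift term via Lipschitzifying $b$ with the uniform bounds of Proposition \ref{boundandHol} and Lemma \ref{unbounded} and the Green-function error of Proposition \ref{disGreen}(3), and the $\sigma h$ term via Cauchy--Schwarz against $\|h\|_{\mbf L^2(\OO_T)}\le a$), closed by squaring and a weakly singular Gronwall inequality, exactly as in the paper. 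The remaining differences are cosmetic: you use a running supremum and a three-way telescoping of the drift term where the paper uses the time-slice norm and a two-way split, and your kernel exponent $(t-s)^{-1/4}$ for the $\sigma$ term is slightly sharper than the paper's $(t-s)^{-1/2}$, neither of which changes the argument.
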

\begin{proof}
	Let $h\in\mbb S_a$, $a\ge0$ and denote $f:=\Upsilon(h)$, $f^n=\Upsilon^n(h)$. Then
	\begin{align*}
		f(t,x)-f^n(t,x)= T_1(t,x)+T_2(t,x)+T_3(t,x),~(t,x)\in\OO_T
	\end{align*}
	with
	\begin{gather*}
		T_1(t,x):=\int_\OO G_t(x,z)u_0(z)\ud z-\int_\OO G^n_t(x,z)u_0(\kappa_n(z))\ud z,\\
		T_2(t,x):=\int_0^t\int_\OO\Delta G_{t-s}(x,z)b(f(s,z))\ud z\ud s-\int_0^t\int_\OO\Delta_n G^n_{t-s}(x,z)b(f^n(s,\kappa_n(z)))\ud z\ud s,\\
		T_3(t,x):=\int_0^t\int_\OO G_{t-s}(x,z)\sigma(f(s,z))h(s,z)\ud z\ud s-\int_0^t\int_\OO G^n_{t-s}(x,z)\sigma(f^n(s,\kappa_n(z)))h(s,z)\ud z\ud s.
	\end{gather*}
	In the proof of Lemma \ref{UpsilonNholder}, we have shown that 
	\begin{align*}
		\int_\OO G^n_t(x,z)u_0(\kappa_n(z))\ud z=-\int_0^t\int_\OO \Delta_{n,z}G^n_s(x,z)\Delta_{n}u_0(z)\ud z\ud s+\Pi_n(u_0)(x).
	\end{align*}
	Similarly, using $\frac{\PD}{\PD t}G_t(x,y)=-\Delta^2G_t(x,y)$ and the Green formula gives
	\begin{align*}
		\int_\OO G_t(x,z)u_0(z)\ud z&=\int_\OO\int_0^t \frac{\PD}{\PD s}G_s(x,z)u_0(z)\ud z\ud s+\int_\OO G_0(x,z)u_0(z)\ud z\\
		&=-\int_0^t\int_\OO\Delta G_s(x,z)u_0''(z)\ud z\ud s+u_0(x).
	\end{align*}
	From the Taylor formula and the definition of $\Delta_{n}$, it can be verified that for any $z\in\OO$,
	\begin{align*}
		|\Delta_{n}u_0(z)-u_0''(z)|\le K\|u_0^{(3)}\|_{\mbf C(\OO)}n^{-1}.
	\end{align*}
	Further, combining the above formula, Proposition \ref{Green} (1),  Proposition \ref{disGreen} (3) and Proposition \ref{Laplacen} (4), we obtain
	\begin{align}\label{T1}
		|T_1(t,x)|\le&\; K\int_0^t\int_\OO \big|\Delta G_s(x,z)\big|\ud z\ud t\|u_0''-\Delta_{n} u_0\|_{\mbf C(\OO)}\nonumber\\
		&\;+\int_0^t\int_\OO|\Delta G_s(x,z)-\Delta_{n,z} G^n_s(x,z)\big|\ud z\ud s\|\Delta_{n} u_0\|_{\mbf C(\OO)}+|\Pi_n(u_0)(x)-u_0(x)|\nonumber\\
		\le&K(a,T)n^{-1}.
	\end{align}
	
	By \eqref{fb-fa}, Proposition \ref{boundandHol} and Lemma \ref{unbounded}, 
	\begin{align}\label{bf-bfn}
		&\quad|b(f(s,z))-b(f^n(s,\kappa_n(z)))|\nonumber\\
		&\le K\big(1+\|f\|_{\mbf C(\OO_T)}^2+\|f^n\|_{\mbf C(\OO_T)}^2\big)|f(s,z)-f(s,\kappa_n(z))+f(s,\kappa_n(z))-f^n(s,\kappa_n(z))|\nonumber\\
		&\le K(a,T)(n^{-1}+f(s,\kappa_n(z))-f^n(s,\kappa_n(z))).
	\end{align}
	It follows from \eqref{bf-bfn},  Proposition \ref{Green} (1), Proposition \ref{disGreen} (3) and the H\"older inequality that
	\begin{align}\label{T2}
		|T_2(t,x)|\le&\; \int_0^t\int_\OO \big|\Delta G_{t-s}(x,z)-\Delta_{n,z} G^n_{t-s}(x,z)\big|\ud z\ud s\|b(f^n)\|_{\mbf C(\OO_T)}\nonumber\\
		&\;+\int_0^t\int_\OO |\Delta G_{t-s}(s,z)||b(f(s,z))-b(f^n(s,\kappa_n(z)))|\ud z\ud s\nonumber\\
		\le&\; K(a,T)n^{-1}+K(a,T)\Big(\int_0^t(t-s)^{-\frac34}\|f^n(s,\cdot)-f(s,\cdot)\|^2_{\mbf C(\OO)}\ud s\Big)^{\frac12}\big(\int_0^t(t-s)^{-\frac34}\ud s\big)^{\frac12}\nonumber\\
		\le &\;K(a,T)n^{-1}+K(a,T)\Big(\int_0^t(t-s)^{-\frac34}\|f^n(s,\cdot)-f(s,\cdot)\|^2_{\mbf C(\OO)}\ud s\Big)^{\frac12}.
	\end{align}

	Similar to the proof of \eqref{bf-bfn}, one has
	\begin{align*}
		|\sigma(f(s,z))-\sigma(f^n(s,\kappa_n(z)))|\le K(a,T)(n^{-1}+f(s,\kappa_n(z))-f^n(s,\kappa_n(z))).
	\end{align*}
	Further, by  Proposition \ref{Green} (1), $h\in\mbb S_a$, Proposition \ref{disGreen} (3) and the H\"older inequality,
	\begin{align}\label{T3}
		|T_3(t,x)|^2\le
		K(a,T)n^{-2}+K(a,T)\int_0^t(t-s)^{-\frac12}\|f^n(s,\cdot)-f(s,\cdot)\|^2_{\mbf C(\OO)}\ud s.
	\end{align}
	Combining \eqref{T1}, \eqref{T2} and \eqref{T3}, we have
	\begin{align*}
		\|f^n(t,\cdot)-f(t,\cdot)\|^2_{\mbf C(\OO)}\le K(a,T)n^{-2}+K(a,T)\int_0^t(t-s)^{-\frac34}\|f^n(s,\cdot)-f(s,\cdot)\|^2_{\mbf C(\OO)}\ud s,
	\end{align*}
	which yields the conclusion by means of the Gronwall inequality. 
\end{proof}

Denote 
$\mcal M_n(\OO_T;\mbb R):=\Big\{w\in\mbf C(\OO_T;\mbb R):~w(0,\cdot)=\Pi_n (u_0)(\cdot),
w(t,\cdot)=\Pi_n(w(t,\cdot))~\text{for any}~ t\in[0,T],\text{and}~\frac{\PD}{\PD t}w(\cdot,\kappa_n(\cdot))\in\mbf L^2(\OO_T;\mbb R)\Big\}.$ Then we have the following result, which indicates that $\Upsilon^n$ is a bijection from $\mcal N_n(\OO_T;\mbb R)$ to $\mcal M_n(\OO_T;\mbb R)$, where $\mcal N_n(\OO_T;\mbb R)=\big\{h\in\mbf L^2(\OO_T;\mbb R):~h(t,x)=h(t,\kappa_n(x)),~(t,x)\in[0,T]\big\}$ is defined as in the proof of Theorem \ref{LDPofun}.
\begin{lem}\label{solveUpsilonN}Let Assumptions \ref{assum1}-\ref{assum2} hold, $u_0\in\mbf C^2(\OO;\mbb R)$. Then the following properties hold.\\
	(1) For any given $n\in\mbb N^+$, $\Upsilon^n(\mbf L^2(\OO_T;\mbb R))\subseteq \mcal M_n(\OO_T;\mbb R)$.\\
	(2) Let Assumption \ref{assum3} hold  and $f\in \mcal M_n(\OO_T;\mbb R)$. Then there is a unique $h\in\mcal N_n(\OO_T;\mbb R)$ such that $\Upsilon^n(h)=f$. And in this case, $h$ can be represented as
	\begin{align}\label{hExpress}
		h(t,x)=\frac{1}{\sigma(f(t,\kappa_n(x)))}\Big[\frac{\partial}{\partial t}f(t,\kappa_n(x))+\Delta_n^2 f(t,\kappa_n(x))-\Delta_{n}b(f(t,\kappa_n(x)))\Big],\, (t,x)\in\OO_T.
	\end{align}	
\end{lem}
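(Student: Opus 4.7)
My plan is to handle part (1) by verifying each defining property of $\mathcal{M}_n$ directly, and to handle part (2) by leveraging the equivalence between the skeleton equation \eqref{DisSkeleton} and the finite-dimensional ODE $\mathcal{A}(q)=\vec f$ already established in the proof of Theorem \ref{LDPofun}.

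For part (1), let $h\in\mathbf{L}^2(\mathcal{O}_T;\mathbb{R})$ and $f=\Upsilon^n(h)$. The initial condition $f(0,\cdot)=\Pi_n(u_0)$ follows from evaluating \eqref{DisSkeleton} at $t=0$ and applying \eqref{Gn0}. The spatial structure $f(t,\cdot)=\Pi_n(f(t,\cdot))$ is immediate since each $x$-dependence in \eqref{DisSkeleton} enters through $\phi_{j,n}=\Pi_n(\phi_j)$. For the time-regularity condition $\partial_t f(\cdot,\kappa_n(\cdot))\in\mathbf{L}^2(\mathcal{O}_T;\mathbb{R})$, I would replace $h$ by its cell-average $\tilde h\in\mathcal{N}_n$ from \eqref{htilde} (which satisfies $\Upsilon^n(\tilde h)=f$), set $q=\theta^{-1}(\tilde h)\in\mathbf{L}^2(0,T;\mathbb{R}^n)$, and use the equivalence $\mathcal{A}(q)=\vec f$ to differentiate:
\begin{equation*}
\vec f\,'(t)=-A_n^2\vec f(t)+A_nB_n(\vec f(t))+\sqrt{n/\pi}\,\Sigma_n(\vec f(t))q(t)\quad\text{a.e.\ } t\in[0,T].
\end{equation*}
Each right-hand term lies in $\mathbf{L}^2(0,T;\mathbb{R}^n)$ (using Lemma \ref{unbounded} for the boundedness of $\vec f$), and since $f(t,\kappa_n(x))=f(t,x_k)$ is piecewise constant in $x$, this transfers to the required $\mathbf{L}^2$-bound on $\partial_t f(\cdot,\kappa_n(\cdot))$.

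For part (2), given $f\in\mathcal{M}_n$, I would first check that the formula \eqref{hExpress} produces an element $h\in\mathcal{N}_n$ that lies in $\mathbf{L}^2(\mathcal{O}_T;\mathbb{R})$. Since $f$ is continuous on the compact set $\mathcal{O}_T$ and $\sigma$ is continuous and nonvanishing (Assumption \ref{assum3}), $\inf_{(t,x)\in\mathcal{O}_T}|\sigma(f(t,\kappa_n(x)))|>0$. The terms $\Delta_n^2 f(\cdot,\kappa_n(\cdot))$ and $\Delta_n b(f(\cdot,\kappa_n(\cdot)))$ are finite linear combinations of the bounded values $f(\cdot,x_j)$, hence lie in $\mathbf{L}^\infty$; together with $\partial_t f(\cdot,\kappa_n(\cdot))\in\mathbf{L}^2$, this yields $h\in\mathbf{L}^2$, and $h(t,x)=h(t,\kappa_n(x))$ holds by construction. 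To verify $\Upsilon^n(h)=f$, I would rewrite \eqref{hExpress} at grid points via Proposition \ref{Laplacen} (2) as
\begin{equation*}
\vec f\,'(t)=-A_n^2\vec f(t)+A_nB_n(\vec f(t))+\Sigma_n(\vec f(t))\vec h(t),\qquad \vec h(t)=(h(t,x_1),\ldots,h(t,x_n))^\top.
\end{equation*}
Setting $q(t)=\sqrt{\pi/n}\,\vec h(t)$ and using $\vec f(0)=U^n(0)$ (from $f(0,\cdot)=\Pi_n(u_0)$), this is exactly $\mathcal{A}(q)=\vec f$. By the equivalence established in Theorem \ref{LDPofun}, $\Upsilon^n(\theta(q))=f$, and $h\in\mathcal{N}_n$ implies $\theta(q)=h$, so $\Upsilon^n(h)=f$. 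Uniqueness is obtained by the same route: if $\Upsilon^n(h')=f$ for some $h'\in\mathcal{N}_n$, then part (1) forces $\vec f$ to satisfy the ODE with $\Sigma_n(\vec f)\vec{h'}$ in place of $\Sigma_n(\vec f)\vec h$; subtracting and using $\sigma(f(t,x_k))\neq 0$ gives $h'=h$ a.e.

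The main obstacle is the algebraic bridge between the Volterra integral formulation \eqref{DisSkeleton} and the finite-dimensional ODE formulation. Directly time-differentiating \eqref{DisSkeleton} would face regularity problems with the integrand $\Delta_{n,y}G^n_{t-s}$ near $s=t$, but Proposition \ref{Laplacen} (1)-(2) combined with the bijection $\theta$ let the entire argument be carried out on the ODE side; the interpolation structure then lifts the conclusion back to $\mathcal{P}_n(\mathcal{O}_T;\mathbb{R})$.
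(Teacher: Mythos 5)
Your proof is correct, but it follows a genuinely different route from the paper's. The paper works directly on the Volterra form \eqref{DisSkeleton}: for part (1) it differentiates the integral equation in time, using Proposition \ref{Laplacen} (1) and (3) together with \eqref{Gn0} to arrive at the identity $\frac{\partial}{\partial t}f(t,\kappa_n(x))=-\Delta_n^2f(t,\kappa_n(x))+\Delta_n b(f(t,\kappa_n(x)))+\int_\OO G_0^n(\kappa_n(x),z)\sigma(f(t,\kappa_n(z)))g(t,z)\,\ud z$, and for part (2) it verifies $\Phi(f,h)=f$ by an integration-by-parts/fundamental-theorem computation on $s\mapsto G^n_{t-s}(\kappa_n(x),z)f(s,\kappa_n(z))$, with uniqueness read off from the same differentiated identity. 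You instead route everything through the finite-dimensional characterization already proved in Theorem \ref{LDPofun} ($\mcal A(q)=\vec f$ iff $\Upsilon^n(\theta(q))=f$, valid here since $\text{Im}(\Upsilon^n)\subseteq\mcal P_n$ and $\mcal M_n\subseteq\mcal P_n$) plus the bijection $\theta$ and Proposition \ref{Laplacen} (2); this reuses existing machinery and avoids touching $G^n$ at all, at the price of relying on the equivalence being an honest "iff" (which the paper does establish). Two remarks. First, your stated motivation for avoiding direct differentiation is mistaken: $\Delta_{n,z}G^n_{t-s}$ is a finite sum of smooth exponentials for fixed $n$, so there is no singularity at $s=t$, and the paper differentiates \eqref{DisSkeleton} directly without difficulty. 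Second, your passage from the a.e.\ ODE back to $\mcal A(q)=\vec f$ uses that $f\in\mcal M_n$ gives absolute continuity of $t\mapsto f(t,x_k)$, not merely an a.e.\ derivative in $\mbf L^2$; this is the intended reading of the definition of $\mcal M_n$ and the paper's own Step 1 uses it in exactly the same way, so it is not a gap relative to the paper, but it is worth stating explicitly.
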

\begin{proof}
	(1)	It suffices to show that $f:=\Upsilon^n(g)\in\mcal M_n(\OO_T;\mbb R)$ for any $g\in\mbf L^2(\OO_T;\mbb R)$ with $n\in\mbb N^+$ being fixed.  According to Lemma \ref{UpsilonNholder}, $f\in\mbf C(\OO_T;\mbb R)$. It is easy to see that $f(t,\cdot)=\Pi_n(f(t,\cdot))$. By \eqref{DisSkeleton} and \eqref{Gn0}, we have   $f(0,x)=\int_\OO G^n_0(x,y)u_0(\kappa_n(y))\ud y=\Pi_n(u_0)(x)$, $x\in \OO$.
	Further, it follows from \eqref{DisSkeleton} that for any $x\in\OO$, $f(t,x)$ is absolutely continuous w.r.t.\ $t$ and
	\begin{align}\label{PDtf}
		\quad~~&\;\frac{\PD}{\PD t}f(t,\kappa_n(x))\nonumber\\
		=	&\;\int_\OO \frac{\PD}{\PD t}G^n_t(\kappa_n(x),z)u_0(\kappa_n(z))\ud z+\int_0^t\int_\OO \frac{\PD}{\PD t}(\Delta_{n,z} G^n_{t-s}(\kappa_n(x),z))b(f(s,\kappa_n(z)))\ud z\ud s \nonumber\\
		&\;+\int_0^t\int_\OO \frac{\PD}{\PD t}G^n_{t-s}(\kappa_n(x),z)\sigma(f(s,\kappa_n(z)))g(s,z)\ud z\ud s\nonumber\\
		&\;+\int_\OO \Delta_{n,z}G^n_0(\kappa_n(x),z)b(f(t,\kappa_n(z)))\ud z+\int_\OO G^n_0(\kappa_n(x),z)\sigma(f(t,\kappa_n(z)))g(t,z)\ud z\nonumber\\
		= &\;-\Delta^2_{n,x}\Big[\int_\OO G^n_t(\kappa_n(x),z)u_0(\kappa_n(z))\ud z+\int_0^t\int_\OO \Delta_{n,z} G^n_{t-s}(\kappa_n(x),z)b(f(s,\kappa_n(z)))\ud z\ud s\nonumber\\
		&\;+\int_0^t\int_\OO G^n_{t-s}(\kappa_n(x),z)\sigma(f(s,\kappa_n(z)))g(s,z)\ud z\ud s\Big]+\Delta_{n}b(f(t,\kappa_n(x)))\nonumber\\
		&\;+\int_\OO G^n_0(\kappa_n(x),z)\sigma(f(t,\kappa_n(z)))g(t,z)\ud z\nonumber\\
		=&\;-\Delta_{n}^2f(t,\kappa_n(x))+\Delta_{n}b(f(t,\kappa_n(x)))+\int_\OO G^n_0(\kappa_n(x),z)\sigma(f(t,\kappa_n(z)))g(t,z)\ud z,
	\end{align} 
	where we have used Proposition \ref{Laplacen} (1) and (3), and \eqref{Gn0}.  Since $\|f\|_{\mbf C(\OO_T)}\le K(a,T)$ and $g\in\mbf L^2(\OO_T;\mbb R)$, we have $\|\frac{\PD}{\PD t}f(\cdot,\kappa_n(\cdot))\|_{\mbf L^\infty(\OO_T)}\le K(a,T,n)$.
	In this way, we get $f\in\mcal M_n(\OO_T;\mbb R)$.

	\vspace{1mm}
	(2)	We divide the proof of the second conclusion into two steps.
	
	\textbf{Step $1$: We prove that $h$ defined by \eqref{hExpress} satisfies $\Upsilon^n(h)=f$.} Denote 
	\begin{align*}
		\Phi(f,h)(t,x):
		=&\;\int_\OO G^n_t(x,z)u_0(\kappa_n(z))\ud z+\int_0^t\int_\OO \Delta_{n,z}G^n_{t-s}(x,z)b(f(s,\kappa_n(z)))\ud z\ud s\\
		&\;+\int_0^t\int_\OO G^n_{t-s}(x,z)\sigma(f(s,\kappa_n(z)))h(s,z)\ud z\ud s,\quad (t,x)\in\OO_T.
	\end{align*}
	Repeating the proof of the first conclusion,  one has $\Phi(f,h)\in \mcal M_n(\OO_T;\mbb R)$.
	Hence, it suffices to prove $\Phi(f,h)(t,\kappa_n(x))=f(t,\kappa_n(x))$, $(t,x)\in\OO_T$, due to $f\in\mcal M_n(\OO_T;\mbb R)$.
	Plugging \eqref{hExpress} into $\Phi(f,h)$ yields that for  $(t,x)\in\OO_T$,
	\begin{align*}
		\Phi(f,h)(t,\kappa_n(x))=&\;\int_\OO G^n_t(\kappa_n(x),z)u_0(\kappa_n(z))\ud z+\int_0^t\int_\OO G^n_{t-s}(\kappa_n(x),z)\frac{\PD}{\PD s}f(s,\kappa_n(z))\ud z\ud s\\
		&\;+\int_0^t\int_\OO G^n_{t-s}(\kappa_n(x),z)\Delta_{n,z}^2f(s,\kappa_n(z))\ud z\ud s,
	\end{align*}
	where we have used Proposition \ref{Laplacen} (3).
	By Proposition \ref{Laplacen} (1) and (3), 
	\eqref{Gn0} and $f\in\mcal M_n(\OO_T;\mbb R)$, we have
	\begin{align*}
		&\quad\Phi(f,h)(t,\kappa_n(x))\\
		&=\int_\OO G^n_t(\kappa_n(x),z)u_0(\kappa_n(z))\ud z+\int_{0}^t\int_\OO\frac{\PD}{\PD s}\big(G^n_{t-s}(\kappa_n(x),z)f(s,\kappa_n(z))\big)\ud z\ud s\\
		&=\int_\OO G^n_t(\kappa_n(x),z)u_0(\kappa_n(z))\ud z+\int_\OO G^n_0(\kappa_n(x),z)f(t,\kappa_n(z))\ud z-\int_\OO G^n_t(\kappa_n(x),z)u_0(\kappa_n(z))\ud z\\
		&=f(t,\kappa_n(x)),
	\end{align*}
	which implies $\Upsilon^n(h)=f$.

	\textbf{Step $2$: We prove that if $g\in\mcal N_n(\OO_T;\mbb R)$  satisfies $\Upsilon^n(g)=f$, then $g$ is given by the right-hand side of \eqref{hExpress}.} Let $\Upsilon^n(g)=f$ and $g\in\mcal N_n(\OO_T;\mbb R)$. It follows from  \eqref{PDtf}, \eqref{Gn0} and $g\in\mcal N_n(\OO_T;\mbb R)$ that 
	\begin{align*}
		\frac{\PD}{\PD t}f(t,\kappa_n(x))
		&=-\Delta_{n}^2f(t,\kappa_n(x))+\Delta_{n}b(f(t,\kappa_n(x)))+\int_\OO G^n_0(\kappa_n(x),z)\sigma(f(t,\kappa_n(z)))g(t,\kappa_n(z))\ud z\\
		&=-\Delta_{n}^2f(t,\kappa_n(x))+\Delta_{n}b(f(t,\kappa_n(x)))+\sigma(f(t,\kappa_n(x)))g(t,\kappa_n(x)).
	\end{align*}
	Thus, $g$ is given by the right-hand side of \eqref{hExpress} due to $\sigma\neq 0$, which finishes the proof. 
\end{proof}

\subsection{Pointwise convergence of $I^n$}
In this part, we use the results of Section \ref{Sec4.1} to prove the $\Gamma$-convergence and equi-coerciveness of $\{J^n_y\}_{n\in\mbb N^+}$, which further produces the pointwise convergence of $I^n$.

\begin{lem}\label{gammaLow}
	Suppose that Assumptions \ref{assum1}-\ref{assum2} hold and $u_0\in\mbf C^3(\OO;\mbb R)$.	Let   $y\in\mbb R$ be arbitrarily fixed.   Then for any $f\in\mbf C(\OO_T;\mbb R)$ and any sequence $\{f_n\}_{n\in\mbb N^+}$ converging to $f$ in $\mbf C(\OO_T;\mbb R)$, it holds that
	\begin{align}\label{gammaliminf}
		\liminf_{n\to+\infty}J^n_y(f_n)\ge J_y(f).
	\end{align}
\end{lem}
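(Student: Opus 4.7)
The plan is to argue by contradiction/extraction and pass to a weak limit in $\mbf L^2(\OO_T;\mbb R)$, then transport the limit back through the solution mapping $\Upsilon$ using the locally uniform convergence of $\Upsilon^n$ (Lemma \ref{erroroder}) together with the complete continuity of $\Upsilon$ (Lemma \ref{UpsilonCompact}).

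More concretely, I would first reduce to the nontrivial case by assuming $\ell:=\liminf_{n\to+\infty}J_y^n(f_n)<+\infty$ and passing to a subsequence (still indexed by $n$) along which $J_y^n(f_n)\to\ell$ and $J_y^n(f_n)\le \ell+1$ for all $n$. By the definition \eqref{Jn} of $J_y^n$, for each $n$ we may select $h_n\in\mbf L^2(\OO_T;\mbb R)$ such that $\Upsilon^n(h_n)=f_n$, $f_n(T,\bar{x})=y$, and
\begin{equation*}
\tfrac12\|h_n\|_{\mbf L^2(\OO_T)}^2\le J_y^n(f_n)+\tfrac1n.
\end{equation*}
The sequence $\{h_n\}$ is then bounded in $\mbf L^2(\OO_T;\mbb R)$, say $h_n\in\mbb S_a$ for some $a\ge 0$, so by the Banach--Alaoglu theorem one can extract a further subsequence (still indexed by $n$) with $h_n\rightharpoonup h$ weakly in $\mbf L^2(\OO_T;\mbb R)$, and the weak lower semicontinuity of the norm gives $\tfrac12\|h\|_{\mbf L^2(\OO_T)}^2\le \ell$.

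The crux is to identify $\Upsilon(h)=f$. Writing
\begin{equation*}
\Upsilon(h)-f=\bigl(\Upsilon(h)-\Upsilon(h_n)\bigr)+\bigl(\Upsilon(h_n)-\Upsilon^n(h_n)\bigr)+\bigl(f_n-f\bigr),
\end{equation*}
the first term tends to zero in $\mbf C(\OO_T;\mbb R)$ by Lemma \ref{UpsilonCompact} (complete continuity of $\Upsilon$ applied to the weakly convergent sequence $h_n\rightharpoonup h$); the second term is bounded in $\mbf C(\OO_T)$-norm by $K(a,T)n^{-1}$ thanks to Lemma \ref{erroroder} together with $h_n\in\mbb S_a$; and the third vanishes by hypothesis. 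Hence $\Upsilon(h)=f$, so in particular $f\in\mathrm{Im}(\Upsilon)$. Moreover $f(T,\bar{x})=\lim_n f_n(T,\bar{x})=y$, so by definition of $J_y$,
\begin{equation*}
J_y(f)\le \tfrac12\|h\|_{\mbf L^2(\OO_T)}^2\le \ell=\liminf_{n\to+\infty}J_y^n(f_n),
\end{equation*}
which is the claim.

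The only step requiring care is the passage $\Upsilon^n(h_n)\to\Upsilon(h)$ in $\mbf C(\OO_T;\mbb R)$: one must combine weak convergence of $h_n$ (handled by complete continuity) with the numerical consistency estimate (handled by the uniform rate in Lemma \ref{erroroder}), and it is essential here that Lemma \ref{erroroder} is \emph{uniform} over the ball $\mbb S_a$ rather than being a pointwise statement. Everything else is standard weak-compactness and lower semicontinuity, so I do not anticipate further obstacles.
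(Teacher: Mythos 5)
Your argument is correct and is essentially the paper's own route: the paper proves this lemma by repeating the proof of Lemma 3.5 in the cited work on stochastic wave equations, which is exactly the extraction of near-optimal controls, weak $\mbf L^2$-compactness, identification of the limit via the complete continuity of $\Upsilon$ (Lemma \ref{UpsilonCompact}) and the uniform-over-$\mbb S_a$ convergence of $\Upsilon^n$ (Lemma \ref{erroroder}), and weak lower semicontinuity of the norm. No gaps.
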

\begin{proof}
	This proof comes by repeating  the proof of \cite[Lemma 3.5]{LDPofSWE} word by word. 
\end{proof}

\begin{lem}\label{gammasup}
	Suppose that Assumptions \ref{assum1}-\ref{assum3} hold and $u_0\in\mbf C^3(\OO;\mbb R)$.	 Then for any subsequence $\{J^{n_k}_y\}_{k\in\mbb N^+}\subseteq \{J^{n}_y\}_{n\in\mbb N^+}$ with $y\in\mbb R$ being fixed, there is a subsubsequence $\{J^{n_{k_j}}_y\}_{j\in\mbb N^+}\subseteq \{J^{n_k}_y\}_{k\in\mbb N^+}$ such that
	\begin{itemize}
		\item[(1)] For any $f\in\mbf C(\OO_T;\mbb R)$, there exists a sequence $\{f_j\}_{j\in\mbb N^+}$ converging to $f$ in $\mbf C(\OO_T;\mbb R)$ and
		\begin{align}\label{fj}
			\limsup_{j\to+\infty} J^{n_{k_j}}_y(f_j)\le J_y(f).
		\end{align} 
		\item [(2)] For any $f\in\mbf C(\OO_T;\mbb R)$, 
		$\big(\Gamma\text{-}\limsup\limits_{j\to+\infty}J^{n_{k_j}}_y\big)(f)\le J_y(f).$
	\end{itemize}
\end{lem}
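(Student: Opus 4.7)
The plan is to derive (2) as a direct consequence of (1): by the definition of $\Gamma$-$\limsup$, the existence of any recovery sequence $\{f_j\}$ satisfying \eqref{fj} forces $\big(\Gamma\text{-}\limsup_j J^{n_{k_j}}_y\big)(f) \le \limsup_j J^{n_{k_j}}_y(f_j) \le J_y(f)$. So I focus on constructing the recovery sequence.

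For $f$ with $J_y(f)=+\infty$, take $f_j\equiv f$; the inequality is trivial. For $J_y(f)<+\infty$, we have $f(T,\bar x)=y$ and $f=\Upsilon(h)$ for some $h\in\mbf L^2(\OO_T;\mbb R)$ with $\tfrac12\|h\|_{\mbf L^2(\OO_T)}^2\le J_y(f)+\epsilon$ for any $\epsilon>0$. The natural route is to approximate $f$ by time-smooth targets $\tilde f^{\delta}$ lying in $\mcal M_n(\OO_T;\mbb R)$ for all large $n$, then invoke the invertibility result Lemma \ref{solveUpsilonN}(2), which under Assumption \ref{assum3} produces a \emph{unique} $\tilde h^{\delta,n}\in\mcal N_n(\OO_T;\mbb R)$ with $\Upsilon^n(\tilde h^{\delta,n})=\tilde f^{\delta}$, given explicitly by \eqref{hExpress}. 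By definition of $J^n_y$ in \eqref{Jn}, if additionally $\tilde f^{\delta}(T,\bar x)=y$, then $J^n_y(\tilde f^{\delta})\le\tfrac12\|\tilde h^{\delta,n}\|_{\mbf L^2(\OO_T)}^2$.

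The core analytic step is to show that, as $n\to+\infty$ with $\delta$ fixed, $\tilde h^{\delta,n}\to \tilde h^{\delta}:=\frac{1}{\sigma(\tilde f^{\delta})}\big[\partial_t\tilde f^{\delta}+\Delta^2\tilde f^{\delta}-\Delta b(\tilde f^{\delta})\big]$ strongly in $\mbf L^2(\OO_T;\mbb R)$. The discrete Laplacian convergence $\Delta_n\varphi\to\Delta\varphi$ for smooth $\varphi$ (Proposition \ref{Laplacen}(4)) handles $\Delta_n^2\tilde f^\delta\to\Delta^2\tilde f^\delta$, while the product-rule estimate in Proposition \ref{Laplacen}(5) combined with the polynomial structure of $b$ controls $\Delta_n b(\tilde f^\delta)\to\Delta b(\tilde f^\delta)$; the denominator is bounded away from zero by Assumption \ref{assum3} plus the uniform boundedness of $\tilde f^{\delta}$. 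It then follows that $\Upsilon(\tilde h^{\delta})=\tilde f^{\delta}$, and the locally Lipschitz property (Lemma \ref{Upsilonh}) together with an appropriate choice of mollification guarantees that as $\delta\to0$, $\tilde f^{\delta}\to f$ in $\mbf C(\OO_T;\mbb R)$ and $\|\tilde h^{\delta}\|_{\mbf L^2(\OO_T)}\to\|h\|_{\mbf L^2(\OO_T)}$. A diagonal extraction along the given $\{n_k\}$ then yields the subsubsequence $\{n_{k_j}\}$ and the recovery sequence $f_j:=\Upsilon^{n_{k_j}}(\tilde h^{\delta_j,n_{k_j}})$; letting $\epsilon\to0$ finishes (1).

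The main obstacle I foresee is the endpoint constraint $f_j(T,\bar x)=y$: naively, $\Upsilon^{n_{k_j}}(\tilde h^{\delta,n})(T,\bar x)$ differs from $y$ by an $O(n^{-1})$ error controlled by Lemma \ref{erroroder}, so without intervention the candidate $f_j$ is not admissible for $J^{n_{k_j}}_y$. Overcoming this requires modifying the mollification $\tilde f^{\delta}$ near $t=T$ to honor the pointwise value $y$ exactly (using the nondegeneracy $\sigma\ne0$ to ensure the inverse formula \eqref{hExpress} still yields an $\mbf L^2$ control), or equivalently, adding a small $\mbf L^2$ correction to $\tilde h^{\delta,n}$ whose norm vanishes as $n\to+\infty$ and $\delta\to0$. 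Keeping this correction of order $o(1)$ in $\mbf L^2(\OO_T)$ is where the quantitative convergence rates in Lemma \ref{erroroder} and Proposition \ref{disGreen} will be essential.
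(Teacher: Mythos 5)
Your reduction of (2) to (1) and the dismissal of the case $J_y(f)=+\infty$ are exactly as in the paper, and your instinct to build the recovery sequence through the discrete inverse formula \eqref{hExpress} and to worry about the endpoint constraint $f_j(T,\bar x)=y$ is on target. However, the central analytic step of your plan has a genuine gap. You mollify the target $f=\Upsilon(h)$ into $\tilde f^{\delta}$ and then (a) claim $\tilde f^{\delta}\in\mcal M_n(\OO_T;\mbb R)$ for all large $n$, and (b) need $\tilde h^{\delta}:=\sigma(\tilde f^{\delta})^{-1}\big[\partial_t\tilde f^{\delta}+\Delta^2\tilde f^{\delta}-\Delta b(\tilde f^{\delta})\big]$ to satisfy $\|\tilde h^{\delta}\|_{\mbf L^2(\OO_T)}\to\|h\|_{\mbf L^2(\OO_T)}$ as $\delta\to 0$. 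Point (a) cannot hold for a fixed function: membership in $\mcal M_n$ requires $\tilde f^{\delta}(t,\cdot)=\Pi_n(\tilde f^{\delta}(t,\cdot))$ (piecewise linearity with respect to the $n$-grid) \emph{and} $\tilde f^{\delta}(0,\cdot)=\Pi_n(u_0)$, which a mollification of $f$ violates for all but at most one $n$ (and mollifying in time near $t=0$ also destroys the initial condition). Point (b) is the deeper problem: for it you would need $f$ to satisfy the skeleton equation in a strong sense with $\partial_t f+\Delta^2 f-\Delta b(f)=\sigma(f)h$ in $\mbf L^2$, i.e.\ fourth-order spatial regularity of $\Upsilon(h)$ for a generic $h\in\mbf L^2(\OO_T;\mbb R)$. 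The paper establishes only H\"older/Lipschitz bounds for $\Upsilon(h)$ (Proposition \ref{boundandHol}); without such regularity, $\Delta^2\tilde f^{\delta}$ need not stay bounded in $\mbf L^2$ as $\delta\to0$, and there is no commutator argument available to show that the mollified combination converges to $\sigma(f)h$. In short, your route inverts the \emph{continuous} map $\Upsilon$, which the available estimates do not support.

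The paper's proof is organized precisely to avoid this: it never inverts $\Upsilon$. One takes a near-optimal control $h$ for $J_y(f)$, averages it onto $\mcal N_{n_{k_j}}(\OO_T;\mbb R)$ as in \eqref{htilde} (which does not increase the $\mbf L^2$-norm), and defines the targets as $\tilde f_j:=\Upsilon^{n_{k_j}}(\tilde h_j)$; these lie in $\mcal M_{n_{k_j}}(\OO_T;\mbb R)$ automatically by Lemma \ref{solveUpsilonN}(1) and converge to $f$ by the locally uniform convergence of $\Upsilon^n$ (Lemma \ref{erroroder}). Only the \emph{discrete} inverse formula is ever used, where $\Delta_{n}$ is a bounded operator on grid functions and the uniform bound $\|\Delta_n\tilde f_j\|_{\mbf L^2(\OO_T)}\le K$ from Lemma \ref{unbounded} plays the role that continuous regularity would have to play in your scheme. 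The endpoint defect $y_j=\tilde f_j(T,\bar x)\to y$ is then repaired by an explicit spatial bump $w_j=\Pi_{n_{k_j}}(p_j)$ and $f_j=\tilde f_j+\frac tT w_j$, and the change of the control is estimated through \eqref{hExpress} using Proposition \ref{Laplacen}(4)--(5) and Assumption \ref{assum3}; this is where the bulk of the work lies, and in your proposal it remains a sketch. To salvage your approach you would either have to prove strong ($H^4$-type) regularity for $\Upsilon(h)$, which is not in the paper, or restructure the argument along the paper's lines by discretizing the control rather than mollifying the target.
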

\begin{proof}
	(1) This proof can be given by following an almost same idea as shown in the proof of \cite[Lemma 3.8]{LDPofSWE}. We only present the main differences between this proof and that of \cite[Lemma 3.8]{LDPofSWE}.

	One only needs to prove \eqref{fj} for the case $J_y(f)<+\infty$. In this case, $f(T,\bar x)=y$ and $f\in\text{Im}(\Upsilon)$. Let $\{n_{k}\}_{k\in \mbb N^+}$ be any given subsequence. As is shown in the very beginning of the proof of \cite[Lemma 3.8]{LDPofSWE}, the following facts hold:
	(i) There is a  	subsequence $\{n_{k_j}\}_{j\in \mbb N^+}$ of $\{n_{k}\}_{k\in \mbb N^+}$ and $\tilde{h}_j\in\mcal N_{n_{k_j}}(\OO_T;\mbb R)$ such that 
	\begin{align}\label{k1}
		\limsup_{j\to+\infty}\|\tilde{h}_j\|^2_{\mbf L^2(\OO_T)}\le 2J_y(f).
	\end{align}
	(ii) The sequence $\tilde f_j:=\Upsilon^{n_{k_j}}(\tilde h_j)$, $j\in\mbb N^+$ converges to $f$ in $\mbf C(\OO_T)$-norm. 
	The latter implies that $\lim\limits_{j\to+\infty}y_j=y$ with $y_j:=\tilde{f}_j(T,\bar{x})$.
	
	\textbf{Case 1: $\bar{x}\in(0,\pi)$.}
	We first construct  ``modification function'' to modify $\tilde f_j$ to get our goal sequence $\{f_j\}_{j\in\mbb N^+}$ such that  $f_j(T,\bar{x})=y$, $j\in\mbb N^+$,
	$\{f_j\}_{j\in\mbb N^+}\subseteq \mcal M_{n_{k_j}}(\OO_T;\mbb R)$ and $\{f_j\}_{j\in\mbb N^+}$ converges to $f$ in $\mbf C(\OO_T;\mbb R)$. In what follows, we always let $j\in\mbb N^+$ be sufficiently large.  Choose $\eta_1,\eta_2\in\mbf C^\infty([0,1];\mbb R)$   such that $\eta_i^{(l)}(0)=\eta_i^{(l)}(1)=0$ for $i=1,2$ and $l=1,2\ldots$, and $\eta_1(0)=\eta_2(1)=0$, $\eta_1(1)=\eta_2(0)=1$. Then we define $p_j:\OO\to\mbb R$ by
	\begin{align*}
		p_j(x)=\begin{cases}
			0\qquad~&\text{if}~x\in[0,\frac{7\pi}{2n_{k_j}}],\\
			(y-y_j)\eta_1\Big(\frac{x-\frac{7\pi}{2n_{k_j}}}{\kappa_{n_{k_j}(\bar{x})}-\frac{\pi}{n_{k_j}}-\frac{7\pi}{2n_{k_j}}}\Big)\qquad~&\text{if}~x\in\big[\frac{7\pi}{2n_{k_j}},\kappa_{n_{k_j}}(\bar{x})-\frac{\pi}{n_{k_j}}\big],\\
			y-y_j\qquad~&\text{if}~x\in\big[\kappa_{n_{k_j}}(\bar{x})-\frac{\pi}{n_{k_j}},\kappa_{n_{k_j}}(\bar{x})+\frac{\pi}{n_{k_j}}\big],\\
			(y-y_j)\eta_2\Big(\frac{x-\kappa_{n_{k_j}}(\bar{x})-\frac{\pi}{n_{k_j}}}{\pi-\frac{7\pi}{2n_{k_j}}-\kappa_{n_{k_j}}(\bar{x})-\frac{\pi}{n_{k_j}}}\Big)\qquad~&\text{if}~x\in\big[\kappa_{n_{k_j}}(\bar{x})+\frac{\pi}{n_{k_j}},\pi-\frac{7\pi}{2n_{k_j}}\big],\\
			0	\qquad~&\text{if}~x\in\big[\pi-\frac{7\pi}{2n_{k_j}},\pi\big].
		\end{cases}
	\end{align*}
	Noting that $\lim\limits_{j\to+\infty}\kappa_{n_{k_j}}(\bar{x})=\bar{x}\in(0,\pi)$, we have that for any $l=0,1,\ldots$ and $x\in\big[\frac{7\pi}{2n_{k_j}},\kappa_{n_{k_j}}(\bar{x})-\frac{\pi}{n_{k_j}}\big]$,
	\begin{align*}
		\frac{\ud^l}{\ud x^l}\eta_1\Big(\frac{x-\frac{7\pi}{2n_{k_j}}}{\kappa_{n_{k_j}(\bar{x})}-\frac{\pi}{n_{k_j}}-\frac{7\pi}{2n_{k_j}}}\Big)=\Big({\kappa_{n_{k_j}(\bar{x})}-\frac{\pi}{n_{k_j}}-\frac{7\pi}{2n_{k_j}}}\Big)^{-l} \eta_1^{(l)}\Big(\frac{x-\frac{7\pi}{2n_{k_j}}}{\kappa_{n_{k_j}(\bar{x})}-\frac{\pi}{n_{k_j}}-\frac{7\pi}{2n_{k_j}}}\Big)\le K(\bar{x},l)\|\eta_1^{(l)}\|_{\mbf C([0,1])}.
	\end{align*}
	Similarly, one has that for any $l=0,1,\ldots$ and $x\in\big[\kappa_{n_{k_j}}(\bar{x})+\frac{\pi}{n_{k_j}},\pi-\frac{7\pi}{2n_{k_j}}\big]$,
	\begin{align*}
		\frac{\ud^l}{\ud x^l}	\eta_2\Big(\frac{x-\kappa_{n_{k_j}}(\bar{x})-\frac{\pi}{n_{k_j}}}{\pi-\frac{7\pi}{2n_{k_j}}-\kappa_{n_{k_j}}(\bar{x})-\frac{\pi}{n_{k_j}}}\Big)\le K(\bar{x},l)\|\eta_2^{(l)}\|_{\mbf C([0,1])}.
	\end{align*}
	Thus, $p_j\in\mbf  C^\infty(\OO;\mbb R)$ and for any $l=0,1,\ldots$,
	\begin{align}\label{pj}
		\|p_j^{(l)}\|_{\mbf C(\OO)}\le K(\bar{x},l)|y-y_j|.
	\end{align}
	
	Further,  we define  $f_j(t,x)=\tilde{f}_j(t,x)+\frac{t}{T}w_j(x)$, $(t,x)\in\OO_T$, where the modification term  
	$w_j:=\Pi_{n_{k_j}}(p_j)$. Thus, 
	it holds that $f_j(T,\bar{x})=y$ and $f_j\in\mcal M_{n_{k_j}}(\OO_T;\mbb R)$  due to $w_j(\bar{x})=p_j(\bar{x})=y-y_j$  and $\tilde f_j\in\mcal M_{n_{k_j}}(\OO_T;\mbb R)$, $j\in\mbb N^+$. Using the definition of $\Delta_{n_{k_j}}$, Proposition \ref{Laplacen} (4) and \eqref{pj} yields
	\begin{gather}
		\|w_j\|_{\mbf C(\OO)}\le 	\|p_j\|_{\mbf C(\OO)}\le K(\bar{x})|y-y_j|,\label{wj0}\\
		[w_j]_{\mbf C^1(\OO)}\le \|p_j'\|_{\mbf C(\OO)}\le K(\bar{x})|y-y_j|,\label{wj1}\\
		\|\Delta_{n_{k_j}}w_j\|_{\mbf C(\OO)}=\|\Delta_{n_{k_j}}\Pi_{n_{k_j}}(p_j)\|_{\mbf C(\OO)}=\|\Delta_{n_{k_j}}p_j\|_{\mbf C(\OO)}\le  K(\bar{x})|y-y_j|,\label{wj2}\\
		\|\Delta^2_{n_{k_j}}w_j\|_{\mbf C(\OO)}=\|\Delta^2_{n_{k_j}}\Pi_{n_{k_j}}(p_j)\|_{\mbf C(\OO)}=\|\Delta^2_{n_{k_j}}p_j\|_{\mbf C(\OO)}\le  K(\bar{x})|y-y_j|.\label{wj4}
	\end{gather}
	By \eqref{wj0}, $\lim\limits_{j\to+\infty}\|f_j-\tilde{f}_j\|_{\mbf C(\OO_T)}=0$ and hence $\lim\limits_{j\to+\infty}\|f_j-f\|_{\mbf C(\OO_T)}=0$.
	
	It follows from $\tilde{f}_j=\Upsilon^{n_{k_j}}(\tilde{h}_j)$, $\tilde{h}_j\in\mcal N_{n_{k_j}}(\OO_T;\mbb R)$, and Lemma \ref{solveUpsilonN} (2) that for $(t,x)\in\OO_T$ and $j\in\mbb N^+$,
	\begin{align}\label{hjtilde}
		\tilde{h}_j(t,x)=\frac{1}{\sigma(\tilde{f}_j(t,\kappa_{n_{k_j}}(x)))}\Big[\frac{\partial}{\partial t}\tilde{f}_j(t,\kappa_{n_{k_j}}(x))+\Delta^2_{n_{k_j}}\tilde{f}_j(t,\kappa_{n_{k_j}}(x))-\Delta_{n_{k_j}}b(\tilde{f}_j(t,\kappa_{n_{k_j}}(x)))\Big].
	\end{align}
	Further, for $(t,x)\in\OO_T$ and $j\in\mbb N^+$, define
	\begin{align}\label{hj}
		h_j(t,x):=\frac{1}{\sigma(f_j(t,\kappa_{n_{k_j}}(x)))}\Big[\frac{\partial}{\partial t}{f}_j(t,\kappa_{n_{k_j}}(x))+\Delta^2_{n_{k_j}}{f}_j(t,\kappa_{n_{k_j}}(x))-\Delta_{n_{k_j}}b(f_j(t,\kappa_{n_{k_j}}(x)))\Big].
	\end{align}
	Again by Lemma \ref{solveUpsilonN} (2), it follows that $h_j\in\mcal N_{n_{k_j}}(\OO_T;\mbb R)$ and $\Upsilon^{n_{k_j}}(h_j)=f_j$, $j\in\mbb N^+$. We claim
	\begin{align}\label{claim}
		\lim_{j\to+\infty}\|h_j-\tilde{h}_j\|_{\mbf L^2(\OO_T)}=0.
	\end{align}
	Using this claim,  $f_j\in\mcal D_{J^{n_{k_j}}_y}$ and \eqref{k1}, we derive
	\begin{align*}
		\limsup_{j\to+\infty} J^{n_{k_j}}_y(f_j)\le \frac{1}{2}\limsup_{j\to+\infty}\|h_j\|_{\mbf L^2(\OO_T)}^2\le \frac{1}{2}\limsup_{j\to+\infty}\|\tilde{h}_j\|_{\mbf L^2(\OO_T)}^2\le J_y(f),
	\end{align*}
	which proves \eqref{fj}. Thus,  \eqref{fj} is true once we justify the claim \eqref{claim}.
	
	Next, we prove \eqref{claim}. 
	Noting that
	\begin{align*}
		&\;\frac{\partial}{\partial t}f_j(t,\kappa_{n_{k_j}}(x))+\Delta^2_{n_{k_j}}f_j(t,\kappa_{n_{k_j}}(x))-\Big(\frac{\partial}{\partial t}\tilde f_j(t,\kappa_{n_{k_j}}(x))+\Delta^2_{n_{k_j}}\tilde f_j(t,\kappa_{n_{k_j}}(x))\Big)\\
		=&\;\frac{1}{T}w_j(\kappa_{n_{k_j}}(x))+\frac{t}{T}\Delta^2_{n_{k_j}}w_j(\kappa_{n_{k_j}}(x)), ~(t,x)\in\OO_T,
	\end{align*}
	we have $h_j(t,x)-\tilde{h}_j(t,x)=E_1(t,x)+E_2(t,x)+E_3(t,x)$, $(t,x)\in\OO_T$ with 
	\begin{gather*}
		E_1(t,x):=\Big(\frac{\partial}{\partial t}\tilde f_j(t,\kappa_{n_{k_j}}(x))+\Delta^2_{n_{k_j}}\tilde f_j(t,\kappa_{n_{k_j}}(x))\Big)\Big(\frac{1}{\sigma(f_j(t,\kappa_{n_{k_j}}(x)))}-\frac{1}{\sigma(\tilde f_j(t,\kappa_{n_{k_j}}(x)))}\Big),\\
		E_2(t,x):=\frac{w_j(\kappa_{n_{k_j}}(x))+t\Delta^2_{n_{k_j}}w_j(\kappa_{n_{k_j}}(x))}{T\sigma( f_j(t,\kappa_{n_{k_j}}(x)))},\\	E_3(t,x):=\frac{\Delta_{n_{k_j}}b(\tilde f_j(t,\kappa_{n_{k_j}}(x)))}{\sigma(\tilde f_j(t,\kappa_{n_{k_j}}(x)))}-\frac{\Delta_{n_{k_j}}b( f_j(t,\kappa_{n_{k_j}}(x)))}{\sigma( f_j(t,\kappa_{n_{k_j}}(x)))}.
	\end{gather*}
	Since $\lim\limits_{j\to+\infty}f_j=\lim\limits_{j\to+\infty}\tilde f_j=f$ in $\mbf C(\OO_T;\mbb R)$,  $\sup\limits_{j\in\mbb N^+}\big(\|f_j\|_{\mbf C(\OO_T)}+\|\tilde f_j\|_{\mbf C(\OO_T)}\big)<+\infty$, which yields 
	\begin{align}\label{inffj}
		\inf\limits_{j\in\mbb N^+}\inf\limits_{(t,x)\in\OO_T}|\sigma(f_j(t,x))|\ge c_1>0,\quad\inf\limits_{j\in\mbb N^+}\inf\limits_{(t,x)\in\OO_T}|\sigma(\tilde f_j(t,x))|\ge c_2>0
	\end{align}
	for two constants $c_1$ and $c_2$ due to Assumption \ref{assum3}. Thus, one immediately has $\lim\limits_{j\to\infty} E_2=0$ in $\mbf C(\OO_T;\mbb R)$ due to \eqref{wj0} and \eqref{wj4}. It follows from \eqref{inffj}, Assumption \ref{assum1} and $\lim\limits_{j\to\infty}\|f_j-\tilde f_j\|_{\mbf C(\OO_T)}=0$ that 
	\begin{align}\label{k2}
		\lim_{j\to+\infty}\sup_{(t,x)\in\OO_T}\Big|\frac{1}{\sigma(f_j(t,\kappa_{n_{k_j}}(x)))}-\frac{1}{\sigma(\tilde f_j(t,\kappa_{n_{k_j}}(x)))}\Big|=0.
	\end{align}
	It follows from $\tilde{f}_j=\Upsilon^{n_{k_j}}(\tilde h_j)$, \eqref{k1} and Lemmas \ref{unbounded} and  \ref{UpsilonNholder} that
	\begin{align}\label{k4}
		\sup_{j\in\mbb N^+}\big(\|\tilde{f}_j\|_{\mbf C(\OO_T)}+\|\Delta_{n_{k_j}}\tilde{f}_j\|_{\mbf L^2(\OO_T)}+\sup_{t\in[0,T]}[\tilde{f}_j(t,\cdot)]_{\mbf C^1(\OO)}\big)\le K(f,T).
	\end{align}
	By  Proposition \ref{Laplacen} (5) and \eqref{k4}, 
	\begin{align}\label{k6}
		\|\Delta_{n_{k_j}}\tilde{f}_j^2\|_{\mbf L^2(\OO_T)}\le K\|\tilde{f}_j\|_{\mbf C(\OO_T)}	\|\Delta_{n_{k_j}}\tilde{f}_j\|_{\mbf L^2(\OO_T)}+K\sup_{t\in[0,T]}[\tilde{f}_j(t,\cdot)]_{\mbf C^1(\OO)}^2\le K(f,T).
	\end{align}
	Similarly, one has $\|\Delta_{n_{k_j}}(\tilde{f}_j^3)\|_{\mbf L^2(\OO_T)}=\|\Delta_{n_{k_j}}(\tilde{f}_j^2\tilde{f}_j)\|_{\mbf L^2(\OO_T)}\le K(f,T)$, which  implies
	\begin{align}\label{Deltabf}
		\|\Delta_{n_{k_j}}b(\tilde{f}_{j})\|_{\mbf L^2(\OO_T)}\le K(f,T).
	\end{align}
	This combined with \eqref{k1}, \eqref{hjtilde}  and the boundedness of $\sigma$ implies
	\begin{align}\label{k3}
		\sup_{j\in\mbb N^+}\int_0^T\int_\OO \Big|\frac{\partial}{\partial t}\tilde f_j(t,\kappa_{n_{k_j}}(x))+\Delta^2_{n_{k_j}}\tilde f_j(t,\kappa_{n_{k_j}}(x))\Big|^2\ud x\ud t<+\infty.
	\end{align}
	Applying \eqref{k2} and \eqref{k3}, we arrive at
	$\lim\limits_{j\to\infty} E_1=0$ in $\mbf L^2(\OO_T;\mbb R)$.
	
	As for $E_3$, we have
	\begin{align*}
		\Delta_{n_{k_j}}b({f}_{j})-\Delta_{n_{k_j}}b(\tilde f_{j})=
		\Delta_{n_{k_j}}\big[(f_{j}-\tilde{f}_{j})(f_{j}^2+f_{j}\tilde{f}_{j}+\tilde{f}^2_{j}-1)\big]=\frac{t}{T}\Delta_{n_{k_j}}(w_jV_j),
	\end{align*}
	with $V_j:=f_{j}^2+f_{j}\tilde{f}_{j}+\tilde{f}^2_{j}-1=3\tilde{f}_j^2+\frac{3t}{T}\tilde{f}_{j}w_j+\frac{t^2}{T^2}w_j^2-1$.
	Using \eqref{wj0}-\eqref{wj2} and Proposition \ref{Laplacen} (5)  yields
	\begin{align*}
		\|\Delta_{n_{k_j}}w_j^2\|_{\mbf L^2(\OO)}\le K\|w_j\|_{\mbf C(\OO)}	\|\Delta_{n_{k_j}}w_j\|_{\mbf L^2(\OO)}+K[w_j]_{\mbf C^1(\OO)}^2\le K(\bar{x})|y-y_j|^2.
	\end{align*}
	Similarly, one has
	\begin{align*}
		&\;\|\Delta_{n_{k_j}}(\tilde{f}_jw_j)\|_{\mbf L^2(\OO_T)}\\
		\le&\; K(T)\big(\|\tilde{f}_j\|_{\mbf C(\OO_T)}	\|\Delta_{n_{k_j}}w_j\|_{\mbf L^2(\OO)}+\|w_j\|_{\mbf C(\OO)}	\|\Delta_{n_{k_j}}\tilde{f}_j\|_{\mbf L^2(\OO_T)}+\sup_{t\in[0,T]}[\tilde{f}_j(t,\cdot)]_{\mbf C^1(\OO)}[w_j]_{\mbf C^1(\OO)}\big)\\
		\le &\;K(\bar{x},f,T)|y-y_j|.
	\end{align*}
	The above estimates and \eqref{k6} indicate that 
	\begin{align}\label{vj}
		\sup_{j\in\mbb N^+}\big(\|V_j\|_{\mbf C(\OO_T)}+\|\Delta_{n_{k_j}}V_j\|_{\mbf L^2(\OO_T)}\big)\le K(\bar{x},f,T)(1+|y-y_j|+|y-y_j|^2).
	\end{align}
	Notice that 
	\begin{align}\label{VjC1}
		\sup_{t\in[0,T]}[V_j(t,\cdot)]_{\mbf C^1(\OO)}\le&\; K \big(\sup_{t\in[0,T]}[\tilde{f}_j(t,\cdot)]_{\mbf C^1(\OO)}+[w_j]_{\mbf C^1(\OO)}\big)\big(\|\tilde{f}_j\|_{\mbf C(\OO_T)}+\|w_j\|_{\mbf C(\OO)}\big)\nonumber\\
		\le&\; K(\bar{x},f,T)(1+|y-y_j|+|y-y_j|^2).
	\end{align}
	Combining \eqref{wj0}-\eqref{wj2} and \eqref{vj}-\eqref{VjC1}, we use Proposition \ref{Laplacen} (5) to get
	\begin{align*}
		\|\Delta_{n_{k_j}}b(\tilde{f}_j)-\Delta_{n_{k_j}}b({f}_j)\|_{\mbf L^2(\OO_T)}\le K(\bar{x},f,T)|y-y_j|(1+|y-y_j|+|y-y_j|^2),
	\end{align*}
	which gives
	\begin{align}\label{k5}
		\lim_{j\to+\infty}\|\Delta_{n_{k_j}}b(\tilde{f}_j)-\Delta_{n_{k_j}}b({f}_j)\|_{\mbf L^2(\OO_T)}=0.
	\end{align} 
	By \eqref{inffj}-\eqref{k2},  \eqref{Deltabf} and \eqref{k5}, we derive $E_3\to 0$ in $\mbf L^2(\OO_T;\mbb R)$ as $j\to+\infty$. 
	In this way, we prove the claim \eqref{claim} and obtain \eqref{fj}. 
	
	\textbf{Case 2: $\bar{x}=0,\pi$.} In this case, \eqref{fj} can be proved similarly as in Case 1. 
	One only needs to modify the expression of $p_j$  accordingly such that $p_j(T,0)=y-y_j$ or $p_j(T,\pi)=y-y_j$. Then using $w_j=\Pi_{n_{k_j}}(p_j)$ as the modification term to complete the proof.

	(2) Taking $\{f_j\}_{\in\mbb N^+}$ be the sequence satisfying \eqref{fj}, we obtain that $\big(\Gamma$-$\limsup\limits_{j\to+\infty}J_y^{n_{k_j}}\big)(f):=\inf\big\{\limsup\limits_{j\to+\infty}J^{n_{k_j}}_y(F_j):\lim\limits_{j\to+\infty}F_j=f~\text{in}~\mbf C(\OO_T;\mbb R)\big\}\le \limsup\limits_{j\to+\infty}J_y^{n_{k_j}}(f_j)\le J_y(f)$.  
\end{proof}

\begin{theo}\label{JyGamma}
	Suppose that Assumptions \ref{assum1}-\ref{assum3} hold and $u_0\in\mbf C^3(\OO;\mbb R)$. Let $y\in\mbb R$ be fixed. Then for any subsequence $\{J^{n_k}_y\}_{k\in\mbb N^+}$ of $\{J^n_y\}_{n\in\mbb N^+}$, there is a subsubsequence $\{J^{n_{k_j}}_y\}_{j\in\mbb N^+}$ which $\Gamma$-converges to $J_y$ on $\mbf C(\OO_T;\mbb R)$. Thus, $\{J^n_y\}_{n\in\mbb N^+}$ $\Gamma$-converges to $J_y$ on $\mbf C(\OO_T;\mbb R)$.
\end{theo}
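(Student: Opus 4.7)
The plan is to combine Lemmas \ref{gammaLow} and \ref{gammasup} via the standard Urysohn-type subsequence principle for $\Gamma$-convergence (see Appendix A). First, recall that $\Gamma$-convergence of a sequence of functionals on a metric space to a functional $J_y$ amounts to the conjunction of two bounds: the $\Gamma$-liminf inequality, $(\Gamma\text{-}\liminf J^n_y)(f)\ge J_y(f)$ for every $f\in\mbf C(\OO_T;\mbb R)$, and the $\Gamma$-limsup inequality, $(\Gamma\text{-}\limsup J^n_y)(f)\le J_y(f)$ for every such $f$.

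Second, I would work along an arbitrary subsequence $\{J^{n_k}_y\}_{k\in\mbb N^+}$. Applying Lemma \ref{gammasup} to this subsequence produces a subsubsequence $\{J^{n_{k_j}}_y\}_{j\in\mbb N^+}$ along which the $\Gamma$-limsup inequality $(\Gamma\text{-}\limsup_{j\to+\infty}J^{n_{k_j}}_y)(f)\le J_y(f)$ is valid for every $f\in\mbf C(\OO_T;\mbb R)$. On the other hand, Lemma \ref{gammaLow} is stated for every sequence $\{f_n\}$ converging to $f$, so it in particular applies along the indices $\{n_{k_j}\}_{j\in\mbb N^+}$: for any $f$ and any $\{f_j\}$ converging to $f$ in $\mbf C(\OO_T;\mbb R)$, one has $\liminf_{j\to+\infty}J^{n_{k_j}}_y(f_j)\ge J_y(f)$, which is precisely the $\Gamma$-liminf inequality $(\Gamma\text{-}\liminf_{j\to+\infty}J^{n_{k_j}}_y)(f)\ge J_y(f)$. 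Putting the two inequalities together, the subsubsequence $\{J^{n_{k_j}}_y\}_{j\in\mbb N^+}$ $\Gamma$-converges to $J_y$ on $\mbf C(\OO_T;\mbb R)$, proving the first assertion.

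Third, for the second assertion I invoke the Urysohn property of $\Gamma$-convergence (a standard fact in $\Gamma$-convergence theory, presumably recalled in Appendix A): a sequence of functionals $\Gamma$-converges to a limit $J_y$ on a metric space if and only if every subsequence admits a further subsubsequence that $\Gamma$-converges to $J_y$. Since we have just verified this subsequence-extraction condition for $\{J^n_y\}_{n\in\mbb N^+}$ with common limit $J_y$, it follows that $\{J^n_y\}_{n\in\mbb N^+}$ itself $\Gamma$-converges to $J_y$, completing the proof.

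There is essentially no obstacle left at this stage, since all the analytic content, that is, the uniform boundedness and equi-continuity of $\Upsilon^n(\mbb S_a)$, the complete continuity of $\Upsilon$, the locally uniform convergence of $\Upsilon^n$ to $\Upsilon$, and the construction of the recovery sequence via the modification $f_j=\tilde f_j+\tfrac{t}{T}w_j$, has already been absorbed into Lemmas \ref{gammaLow} and \ref{gammasup}. The only nontrivial formal ingredient is the Urysohn characterization of $\Gamma$-convergence, so the proof is in effect a two-line bookkeeping argument once those lemmas are in hand.
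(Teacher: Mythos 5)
Your proposal is correct and follows essentially the same route as the paper: Lemma \ref{gammasup} (2) supplies the $\Gamma$-limsup inequality along a subsubsequence, Lemma \ref{gammaLow} gives the $\Gamma$-liminf inequality (the paper passes from the full sequence to the subsubsequence by citing Proposition \ref{subgamma1}, which is the small bookkeeping step you phrase informally as "in particular applies along the indices"), and the final step is exactly the Urysohn-type compactness criterion, Proposition \ref{subgamma2}.
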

\begin{proof}
	It follows from Lemma \ref{gammasup} (2), for any subsequence $\{J^{n_k}_y\}_{k\in\mbb N^+}$ of $\{J^n_y\}_{n\in\mbb N^+}$, there is a subsubsequence $\{J^{n_{k_j}}_y\}_{j\in\mbb N^+}$ such that
	$\big(\Gamma$-$\limsup\limits_{j\to+\infty}\ J^{n_{k_j}}_y\big)(f)\le J_y(f)$ for any $f\in\mbf C(\OO_T;\mbb R)$. 
	Further,	by Lemma \ref{gammaLow}, we have that for any $f\in\mbf C (\OO_T;\mbb R)$,
	\begin{align*}
		\big(\Gamma\text{-}\liminf_{n\to+\infty}J^n_y\big)(f):=\inf\big\{\liminf_{n\to+\infty}J^n_y(f_n):~\lim_{n\to+\infty}f_n=f~\text{in}~\mbf C(\OO_T;\mbb R)\big\}\ge J_y(f),
	\end{align*}
	which along with Proposition \ref{subgamma1} implies $\big(\Gamma$-$\liminf\limits_{j\to+\infty} J^{n_{k_j}}_y\big)(f)\ge J_y(f)$. Thus, it holds that  $\{J^{n_{k_j}}_y\}_{j\in\mbb N^+}$ $\Gamma$-converges to $J_y$ on $\mbf C(\OO_T;\mbb R)$. Finally, the proof is complete according to Proposition \ref{subgamma2}. 
\end{proof}

\begin{lem}\label{equi}
	Let Assumptions \ref{assum1}-\ref{assum2} hold and $u_0\in\mbf C^2(\OO;\mbb R)$.	For any $y\in\mbb R$, $\{J^n_y\}_{n\in\mbb N^+}$ is equi-coercive on $\mbf C(\OO_T;\mbb R)$.
\end{lem}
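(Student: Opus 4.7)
The plan is to show that for any $M \ge 0$, the union $\bigcup_{n \in \mbb N^+}\{f \in \mbf C(\OO_T;\mbb R) : J_y^n(f) \le M\}$ is contained in a single compact subset of $\mbf C(\OO_T;\mbb R)$, which is exactly the definition of equi-coerciveness. The strategy is to identify these sublevel sets with images of bounded $\mbf L^2$-balls under $\Upsilon^n$, then invoke Arzel\`a--Ascoli with the uniform bounds already established in Lemmas \ref{unbounded} and \ref{UpsilonNholder}.

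First I would fix $M \ge 0$ and take any $f$ with $J_y^n(f) \le M$ for some $n$. By the definition \eqref{Jn} of $J_y^n$, we have $f(T,\bar x) = y$, $f \in \mrm{Im}(\Upsilon^n)$, and the infimum of $\tfrac12\|h\|_{\mbf L^2(\OO_T)}^2$ over $h$ with $\Upsilon^n(h) = f$ is at most $M$. Picking a near-minimizer, I can find some $h \in \mbb S_a$ with $a := \sqrt{2M+1}$ and $\Upsilon^n(h) = f$. Thus, setting $\mcal K_M := \bigcup_{n \in \mbb N^+}\Upsilon^n(\mbb S_a)$, we obtain the inclusion
\begin{equation*}
\{f \in \mbf C(\OO_T;\mbb R) : J_y^n(f) \le M\} \subseteq \mcal K_M \quad \text{for every } n \in \mbb N^+.
\end{equation*}

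Next I would verify that $\mcal K_M$ is relatively compact in $\mbf C(\OO_T;\mbb R)$. Lemma \ref{unbounded} provides the uniform sup-norm bound $\sup_{h \in \mbb S_a} \sup_{n\ge 1}\|\Upsilon^n(h)\|_{\mbf C(\OO_T)} \le K(a,T)$, so $\mcal K_M$ is uniformly bounded. Lemma \ref{UpsilonNholder} (which applies since $u_0 \in \mbf C^2(\OO;\mbb R)$) gives a uniform H\"older modulus of continuity: for any $\alpha \in (0,1)$,
\begin{equation*}
\sup_{h \in \mbb S_a}\sup_{n\ge 1}|\Upsilon^n(h)(t,x) - \Upsilon^n(h)(s,y)| \le K(a,T,\alpha)\bigl(|x-y| + |t-s|^{3\alpha/8}\bigr),
\end{equation*}
so $\mcal K_M$ is equi-continuous on the compact domain $\OO_T$. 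The Arzel\`a--Ascoli theorem then implies that the closure $\overline{\mcal K_M}$ is a compact subset of $\mbf C(\OO_T;\mbb R)$, and setting this as the required compact set completes the verification that $\{J_y^n\}_{n\in\mbb N^+}$ is equi-coercive.

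There is no real obstacle here: once one observes that sublevel sets of $J_y^n$ sit inside $\Upsilon^n(\mbb S_a)$, the uniform-in-$n$ boundedness and equi-continuity supplied by Lemmas \ref{unbounded} and \ref{UpsilonNholder} do all the work through Arzel\`a--Ascoli. The only subtlety is using an arbitrarily small slack (taking $a = \sqrt{2M+1}$ rather than $\sqrt{2M}$) to accommodate the fact that the infimum defining $J_y^n(f)$ need not be attained.
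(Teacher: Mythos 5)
Your proposal is correct and follows essentially the same route as the paper: the paper's proof simply invokes Lemmas \ref{unbounded} and \ref{UpsilonNholder} and refers to the argument of \cite[Lemma 3.10]{LDPofSWE}, which is exactly your identification of sublevel sets with subsets of $\bigcup_n \Upsilon^n(\mbb S_a)$ followed by Arzel\`a--Ascoli. The small slack $a=\sqrt{2M+1}$ to handle a possibly unattained infimum is handled appropriately.
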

\begin{proof}
	Using Lemmas \ref{unbounded} and \ref{UpsilonNholder}, one can complete the proof by following  the same idea as in the proof of \cite[Lemma 3.10]{LDPofSWE}. 
\end{proof}

\begin{theo}\label{pointconvergence}
	Let  Assumption \ref{assum1}-\ref{assum3} hold and $u_0\in\mbf C^3(\OO;\mbb R)$.  Then 
	$		\lim\limits_{n\to+\infty}I^n(y)=I(y),~ y\in\mbb R.$	
\end{theo}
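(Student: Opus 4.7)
The plan is to obtain this result as a direct corollary of the two structural results for $\{J^n_y\}_{n\in\mbb N^+}$ already in place: the $\Gamma$-convergence of $\{J^n_y\}_{n\in\mbb N^+}$ to $J_y$ on $\mbf C(\OO_T;\mbb R)$ (Theorem \ref{JyGamma}) and the equi-coerciveness of $\{J^n_y\}_{n\in\mbb N^+}$ (Lemma \ref{equi}). Since $I^n(y)$ and $I(y)$ are exactly the infima of $J^n_y$ and $J_y$ over $\mbf C(\OO_T;\mbb R)$ by \eqref{IandIn}, the fundamental theorem on convergence of infima under $\Gamma$-convergence (stated in Appendix A) will then deliver the conclusion. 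I would separate the argument into the two one-sided bounds.

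For the upper bound $\limsup_{n\to\infty}I^n(y)\le I(y)$, only the $\Gamma$-limsup portion of Theorem \ref{JyGamma} is needed. Fixing an arbitrary $f\in\mbf C(\OO_T;\mbb R)$, one chooses a recovery sequence $f_n\to f$ in $\mbf C(\OO_T;\mbb R)$ with $\limsup_{n\to\infty}J^n_y(f_n)\le J_y(f)$. Combining the pointwise inequality $I^n(y)\le J^n_y(f_n)$ with this limsup bound and taking the infimum over $f$ produces the desired estimate.

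For the lower bound $\liminf_{n\to\infty}I^n(y)\ge I(y)$, equi-coerciveness enters in an essential way. I would extract a subsequence $\{I^{n_k}(y)\}_{k\in\mbb N^+}$ attaining the liminf (the case $+\infty$ being trivial) and select near-minimizers $f_k\in\mbf C(\OO_T;\mbb R)$ with $J^{n_k}_y(f_k)\le I^{n_k}(y)+\tfrac{1}{k}$. Lemma \ref{equi} then confines $\{f_k\}_{k\in\mbb N^+}$ to a precompact subset of $\mbf C(\OO_T;\mbb R)$, so along a further subsubsequence $f_{k_j}\to f^\star$. Applying the $\Gamma$-liminf inequality of Theorem \ref{JyGamma} yields
\begin{align*}
J_y(f^\star)\le\liminf_{j\to\infty}J^{n_{k_j}}_y(f_{k_j})\le\liminf_{j\to\infty}\Big(I^{n_{k_j}}(y)+\tfrac{1}{k_j}\Big)=\liminf_{n\to\infty}I^n(y),
\end{align*}
and the inequality $I(y)\le J_y(f^\star)$ closes the argument.

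The genuine analytic difficulty underlying this theorem has already been absorbed upstream: the non-Lipschitz cubic in \eqref{DisSkeleton} was what drove the uniform bounds for $\Upsilon^n(\mbb S_a)$ in Lemma \ref{unbounded} and forced the careful modification-function construction in the proof of Lemma \ref{gammasup}. Given those inputs, the step isolated here is a purely abstract consequence of $\Gamma$-convergence plus equi-coerciveness and presents no new obstacle.
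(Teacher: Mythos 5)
Your proposal is correct and follows essentially the same route as the paper: the paper likewise combines the equi-coerciveness of $\{J^n_y\}_{n\in\mbb N^+}$ (Lemma \ref{equi}) and the $\Gamma$-convergence to $J_y$ (Theorem \ref{JyGamma}) with the representation \eqref{IandIn}, and then simply invokes Theorem \ref{gammatheorem} for the convergence of the infima. Your two one-sided bounds merely spell out the standard proof of that abstract theorem (with the subsequence step justified via Proposition \ref{subgamma1}), so there is no substantive difference.
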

\begin{proof}
	For any given $y\in\mbb R$, it follows from  Lemma \ref{equi} and Theorem \ref{JyGamma} that $\{J_y^n\}_{n\in\mbb N^+}$ is equi-coercive and $\Gamma$-converges to $J_y$ on $\mbf C(\OO_T;\mbb R)$. Thus Theorem \ref{gammatheorem} and \eqref{IandIn} finish the proof.  
\end{proof}

\appendix
\section*{Appendix}
\setcounter{equation}{0}
\setcounter{subsection}{0}
\setcounter{Def}{0}
\renewcommand{\theDef}{A.\arabic{Def}}
\renewcommand{\theequation}{A.\arabic{equation}}
\renewcommand{\thesubsection}{A.\arabic{subsection}}

\subsection*{A: $\Gamma$-convergence}\label{AppenA}
In this part, we introduce some definitions and results in the theory of $\Gamma$-convergence.  We refer the interested readers to \cite{Gamma93,Gamma18} for more details on $\Gamma$-convergence.  Let $X$ be a metric space and $\overline{\mbb R}=\mbb R\cup\{\pm\infty\}$ denote the set of extended real numbers. In this part, we always let $F_n,\,F:X\to\overline{\mbb R}$, $n\in\mbb N^+$ be given functionals.

\begin{Def}\label{Gammadef1}
	The sequence $\{F_n\}_{n\in\mbb N^+}$ is said to $\Gamma$-converge to $F$, if
	\begin{itemize}
		\item[(1)] For all sequences $\{x_n\}_{n\in\mbb N^+} \subseteq X$ with $\lim\limits_{n\to+\infty}x_n=x$ in $X$, the  \textbf{liminf inequality} holds:
		$$\liminf_{n\to+\infty}F_n(x_n)\ge F(x).$$
		\vspace{-5mm}
		\item[(2)] For any $x\in X$, there is a \textbf{recovery sequence} $\{x_n\}_{n\in\mbb N^+}$ such that $\lim\limits_{n\to+\infty}x_n=x$ in $X$ and 
		$$\limsup_{n\to+\infty}F_n(x_n)\le F(x).$$	
	\end{itemize}
\end{Def}
\begin{rem}
	Notice that under the first condition of Definition \ref{Gammadef1}, the second condition is equivalent to that for any $x\in X$, there is $\{x_n\}_{n\in\mbb N^+}$ converging to $x$ in $X$ and 
	$\lim\limits_{n\to+\infty}F_n(x_n)= F(x).$
\end{rem}

\begin{Def}\label{Gammadef2}
	The \textbf{$\Gamma$-lower limit}  and the
	\textbf{$\Gamma$-super limit} of $\{F_n\}_{n\in\mbb N^+}$ are, respectively,
	\begin{gather*}
		(\Gamma\text{-}\liminf_{n\to+\infty}F_n)(x)=\inf\Big\{\liminf_{n\to+\infty}F_n(x_n):~x_n\to x~\text{in}~X\Big\},\quad x\in X,\\
		(\Gamma\text{-}\limsup_{n\to+\infty}F_n)(x)=\inf\Big\{\limsup_{n\to+\infty}F_n(x_n):~x_n\to x~\text{in}~X\Big\},\quad x\in X.
	\end{gather*}
	If  $\Gamma\text{-}\liminf\limits_{n\to+\infty}F_n=\Gamma\text{-}\limsup\limits_{n\to+\infty}F_n=F$, then we write $F=\Gamma\text{-}\lim\limits_{n\to+\infty} F_n$ and we say that the sequence $\{F_n\}_{n\in\mbb N^+}$ $\Gamma$-converges to $F$ (on $X$) or that $F$ is the $\Gamma$-limit of $\{F_n\}_{n\in\mbb N^+}$ (on $X$).
\end{Def}
Readers can refer to \cite[Definition 4.1, Proposition 8.1]{Gamma93} and \cite[Section 13.1]{Gamma18} on the equivalence of Definitions \ref{Gammadef1} and \ref{Gammadef2}.

The following give some relationships between the $\Gamma$-limit of a sequence of functionals and the $\Gamma$-limit of its subsequence.
\begin{pro}\textup{\cite[Proposition 6.1]{Gamma93}}\label{subgamma1}
	If $\{F_{n_k}\}_{k\in\mbb N^+}$ is a subsequence of $\{F_n\}_{n\in\mbb N^+}$, then 
	\begin{gather*}
		\Gamma\text{-}\liminf_{n\to+\infty}F_n\le \Gamma\text{-}\liminf_{k\to+\infty}F_{n_k},\quad \Gamma\text{-}\limsup_{n\to+\infty}F_n\ge \Gamma\text{-}\limsup_{k\to+\infty}F_{n_k}.
	\end{gather*}
\end{pro}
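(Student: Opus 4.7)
\textbf{Proof proposal for Proposition \ref{subgamma1}.} The plan is to verify the two inequalities directly from the variational definitions in Definition \ref{Gammadef2}, using only the elementary fact that for any real sequence $\{a_n\}_{n\in\mbb N^+}$ and any subsequence $\{a_{n_k}\}_{k\in\mbb N^+}$ one has $\liminf_{n\to+\infty}a_n\le \liminf_{k\to+\infty}a_{n_k}$ and $\limsup_{n\to+\infty}a_n\ge \limsup_{k\to+\infty}a_{n_k}$.

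For the first inequality, fix $x\in X$ and let $\{x_k\}_{k\in\mbb N^+}\subseteq X$ be any sequence converging to $x$ in $X$. I would \emph{lift} this sequence along the subsequence indices $\{n_k\}$ to a full sequence $\{y_n\}_{n\in\mbb N^+}\subseteq X$ by setting $y_{n_k}:=x_k$ for every $k\in\mbb N^+$ and $y_n:=x$ for every $n\notin\{n_k:k\in\mbb N^+\}$. Then $y_n\to x$ in $X$ (since the values at non-subsequence indices are constantly $x$, and the subsequence $y_{n_k}=x_k$ converges to $x$ by hypothesis). Applying the elementary liminf fact to the real sequence $a_n:=F_n(y_n)$ and its subsequence $a_{n_k}=F_{n_k}(y_{n_k})=F_{n_k}(x_k)$, I obtain $\liminf_{n\to+\infty}F_n(y_n)\le \liminf_{k\to+\infty}F_{n_k}(x_k)$. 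By the definition of the $\Gamma$-lower limit, $(\Gamma\text{-}\liminf_{n\to+\infty}F_n)(x)\le \liminf_{n\to+\infty}F_n(y_n)$, so $(\Gamma\text{-}\liminf_{n\to+\infty}F_n)(x)\le \liminf_{k\to+\infty}F_{n_k}(x_k)$. Taking the infimum of the right-hand side over all sequences $\{x_k\}$ converging to $x$ yields $(\Gamma\text{-}\liminf_{n\to+\infty}F_n)(x)\le(\Gamma\text{-}\liminf_{k\to+\infty}F_{n_k})(x)$, which is the first inequality.

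For the second inequality, fix $x\in X$ and let $\{y_n\}_{n\in\mbb N^+}\subseteq X$ be any sequence converging to $x$. The restriction $\{y_{n_k}\}_{k\in\mbb N^+}$ is then a sequence in $X$ that still converges to $x$, so by the definition of the $\Gamma$-super limit applied to $\{F_{n_k}\}_{k\in\mbb N^+}$,
\begin{equation*}
(\Gamma\text{-}\limsup_{k\to+\infty}F_{n_k})(x)\le \limsup_{k\to+\infty}F_{n_k}(y_{n_k}).
\end{equation*}
The elementary limsup fact applied to $a_n:=F_n(y_n)$ gives $\limsup_{k\to+\infty}F_{n_k}(y_{n_k})\le \limsup_{n\to+\infty}F_n(y_n)$, hence $(\Gamma\text{-}\limsup_{k\to+\infty}F_{n_k})(x)\le \limsup_{n\to+\infty}F_n(y_n)$. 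Taking the infimum over all $\{y_n\}$ converging to $x$ produces $(\Gamma\text{-}\limsup_{k\to+\infty}F_{n_k})(x)\le (\Gamma\text{-}\limsup_{n\to+\infty}F_n)(x)$, as required.

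There is no real obstacle; the only point requiring a little care is the construction of the lift in the first part, where I must ensure the full sequence $\{y_n\}$ still converges to $x$. Filling the non-subsequence slots with the constant value $x$ (or, equivalently, with $x_{k(n)}$ for any choice with $k(n)\to\infty$) resolves this, and then the two inequalities reduce to the standard sub/super-sequence comparison for $\liminf$ and $\limsup$ of real numbers.
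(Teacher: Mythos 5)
Your proof is correct; the paper gives no proof of this proposition (it simply cites \cite[Proposition 6.1]{Gamma93}), and your argument—lifting a competitor sequence $\{x_k\}$ along the subsequence indices with constant fill $x$ for the $\Gamma\text{-}\liminf$ inequality, and restricting a competitor sequence to the subsequence for the $\Gamma\text{-}\limsup$ inequality—is exactly the standard proof of this fact. The one delicate point, namely that the lifted sequence $\{y_n\}$ still converges to $x$ and that the fill values cannot spoil the comparison $\liminf_{n\to+\infty}F_n(y_n)\le\liminf_{k\to+\infty}F_{n_k}(x_k)$, is handled correctly.
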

\begin{pro}\textup{\cite[Proposition 8.3]{Gamma93}}\label{subgamma2}
	$\{F_n\}_{n\in\mbb N^+}$ $\Gamma$-converges to $F$ on $X$ if and only if every subsequence
	of $\{F_n\}_{n\in\mbb N^+}$ contains a further subsequence which $\Gamma$-converges to $F$.	
\end{pro}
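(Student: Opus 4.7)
The plan is to prove both implications of the equivalence separately, with the forward direction a quick consequence of Proposition \ref{subgamma1} and the backward direction carrying the substantive content.

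For the forward direction, suppose $\{F_n\}_{n\in\mbb N^+}$ $\Gamma$-converges to $F$ on $X$. Given any subsequence $\{F_{n_k}\}_{k\in\mbb N^+}$, Proposition \ref{subgamma1} together with the trivial inequality $\Gamma\text{-}\liminf \le \Gamma\text{-}\limsup$ yields
\[
F = \Gamma\text{-}\liminf_{n\to+\infty}F_n \le \Gamma\text{-}\liminf_{k\to+\infty}F_{n_k} \le \Gamma\text{-}\limsup_{k\to+\infty}F_{n_k} \le \Gamma\text{-}\limsup_{n\to+\infty}F_n = F,
\]
so $\{F_{n_k}\}$ itself $\Gamma$-converges to $F$, and the required further subsequence can be taken to be $\{F_{n_k}\}$ itself.

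For the reverse direction, assume the Urysohn-type hypothesis. Applying Proposition \ref{subgamma1} to any one of the $\Gamma$-converging subsequences guaranteed by the hypothesis immediately yields the pointwise inequalities $\Gamma\text{-}\liminf_{n\to+\infty}F_n \le F \le \Gamma\text{-}\limsup_{n\to+\infty}F_n$ on $X$. I would first establish the reverse inequality $\Gamma\text{-}\liminf_{n\to+\infty}F_n \ge F$ at each $x_0\in X$. Assume for contradiction $\Gamma\text{-}\liminf F_n(x_0) < \alpha < F(x_0)$. By Definition \ref{Gammadef2} of $\Gamma\text{-}\liminf$ as an infimum over convergent sequences, there exists $x_n \to x_0$ with $\liminf_n F_n(x_n) < \alpha$; extract $\{n_k\}$ along which $F_{n_k}(x_{n_k})$ converges to some $L < \alpha$. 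Apply the hypothesis to $\{n_k\}$ to produce a further subsequence $\{n_{k_j}\}$ along which $\{F_{n_{k_j}}\}$ $\Gamma$-converges to $F$; since $x_{n_{k_j}} \to x_0$, the liminf inequality in Definition \ref{Gammadef1} forces $F(x_0) \le \liminf_j F_{n_{k_j}}(x_{n_{k_j}}) = L < \alpha$, contradicting $\alpha < F(x_0)$.

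The main obstacle is the reverse inequality $\Gamma\text{-}\limsup_{n\to+\infty}F_n \le F$, because recovery sequences are only furnished along each $\Gamma$-converging subsequence and must be reassembled into a recovery sequence for the full sequence $\{F_n\}$. My plan is a cofiniteness preparatory claim followed by a diagonal selection. Fix $x_0 \in X$; using the metric structure of $X$, for each $\epsilon > 0$ set
\[
A_\epsilon := \{n \in \mbb N^+ : \exists\, y \in B(x_0,\epsilon) \text{ with } F_n(y) \le F(x_0) + \epsilon\},
\]
where $B(x_0,\epsilon)$ is the open ball of radius $\epsilon$. The crucial claim is that $A_\epsilon$ is cofinite for every $\epsilon > 0$: otherwise $A_\epsilon^c$ is an infinite subsequence of $\mbb N^+$, which by hypothesis contains a further subsequence $\{n_{k_j}\}$ along which $\{F_{n_{k_j}}\}$ $\Gamma$-converges to $F$, and its recovery sequence at $x_0$ produces $z_j \to x_0$ with $\limsup_j F_{n_{k_j}}(z_j) \le F(x_0)$, so for $j$ large one has $z_j \in B(x_0,\epsilon)$ and $F_{n_{k_j}}(z_j) < F(x_0)+\epsilon$, contradicting $n_{k_j} \in A_\epsilon^c$. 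Granted the cofiniteness of each $A_\epsilon$, let $N(\epsilon)$ be an integer beyond which $A_\epsilon$ contains every index, and select $\epsilon_n \downarrow 0$ slowly enough that $n \in A_{\epsilon_n}$ for all sufficiently large $n$. Pick $x_n \in B(x_0,\epsilon_n)$ with $F_n(x_n) \le F(x_0) + \epsilon_n$ for such $n$ (and set $x_n = x_0$ on the finitely many exceptional indices). Then $x_n \to x_0$ and $\limsup_n F_n(x_n) \le F(x_0)$, which forces $\Gamma\text{-}\limsup_n F_n(x_0) \le F(x_0)$ by Definition \ref{Gammadef2} and contradicts any strict inequality $\Gamma\text{-}\limsup_n F_n(x_0) > F(x_0)$. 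The delicate point is precisely the cofiniteness claim, where the Urysohn hypothesis is consumed; the subsequent diagonal selection of $x_n$ is a routine argument.
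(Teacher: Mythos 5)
The paper does not prove this proposition at all --- it is quoted verbatim from Dal Maso \cite[Proposition 8.3]{Gamma93} and used as a black box --- so there is no internal proof to compare against; your proposal supplies a self-contained argument, and it is essentially the standard one. The forward direction via Proposition \ref{subgamma1} sandwiched between $\Gamma\text{-}\liminf$ and $\Gamma\text{-}\limsup$ is correct. In the backward direction, the contradiction argument for $\Gamma\text{-}\liminf_{n}F_n(x_0)\ge F(x_0)$ is sound (extracting $n_k$ with $F_{n_k}(x_{n_k})\to L<\alpha$ and then a further $\Gamma$-convergent subsequence works even if $L=-\infty$), and the cofiniteness-of-$A_\epsilon$ claim plus the diagonal choice $\epsilon_n\downarrow 0$ is a legitimate way to assemble a global recovery sequence from recovery sequences that are a priori only available along subsequences. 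This is exactly where the subsequence hypothesis must be consumed, and you identified that correctly.

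One edge case in the $\Gamma\text{-}\limsup$ half needs patching: since the functionals are $\overline{\mbb R}$-valued, the threshold $F(x_0)+\epsilon$ degenerates when $F(x_0)=-\infty$. In that case membership in $A_\epsilon$ would require $F_n(y)=-\infty$ exactly, while the recovery sequence along the $\Gamma$-convergent subsequence only gives $F_{n_{k_j}}(z_j)\to-\infty$, so the contradiction with $n_{k_j}\in A_\epsilon^c$ does not follow as written. The fix is routine: replace the threshold by $\max\{F(x_0),-1/\epsilon\}+\epsilon$ (equivalently, run the argument with levels $-M$ and diagonalize over $M$ as well), after which $\limsup_n F_n(x_n)\le -1/\epsilon_n\to-\infty$ gives the desired inequality; the case $F(x_0)=+\infty$ is trivial. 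For the application in this paper ($F=J_y\ge 0$) the issue never arises, but since the proposition is stated for general $\overline{\mbb R}$-valued functionals the proof should cover it.
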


Next, we introduce the  well-known result, concerning the variational calculus,  in the theory of $\Gamma$-convergence.
\begin{Def}\textup{\cite[Definition 7.6]{Gamma93}}\label{equidef}
	We say that the sequence $\{F_n\}_{n\in\mbb N^+}$ is equi-coercive (on $X$), if
	for every $t\in\mbb R$, there exists a compact subset $K_t$ of $X$ such
	that $\{F_n\le t\}\subseteq K_t$ for every $n\in\mbb N^+$.	
\end{Def}
\begin{theo}\textup{\cite[Theorem 7.8]{Gamma93}}\label{gammatheorem}
	If $\{F_n\}_{n\in\mbb N^+}$ is equi-coercive and $\Gamma$-converges to $F$ on $X$, then 
	$\min\limits_{x\in X}F(x)=\lim\limits_{n\to+\infty}\inf\limits_{x\in X}F_n(x).$
\end{theo}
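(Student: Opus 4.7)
The plan is to reduce the pointwise convergence of the one-point LDRFs to a direct application of the fundamental theorem of $\Gamma$-convergence (Theorem \ref{gammatheorem}), exploiting the representations of $I$ and $I^n$ in \eqref{IandIn} together with the two main preparatory results proved immediately above, namely the equi-coerciveness of $\{J^n_y\}_{n\in\mbb N^+}$ (Lemma \ref{equi}) and its $\Gamma$-convergence to $J_y$ on $\mbf C(\OO_T;\mbb R)$ (Theorem \ref{JyGamma}).

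Concretely, I would fix $y\in\mbb R$ and verify that the family $\{J^n_y\}_{n\in\mbb N^+}$ satisfies the hypotheses of Theorem \ref{gammatheorem} on the metric space $X=\mbf C(\OO_T;\mbb R)$. The equi-coerciveness in the sense of Definition \ref{equidef} is supplied word-for-word by Lemma \ref{equi}, which guarantees that for every $t\in\mbb R$ the sublevel sets $\{f\in\mbf C(\OO_T;\mbb R):J^n_y(f)\le t\}$ lie in a common compact subset of $\mbf C(\OO_T;\mbb R)$, independent of $n$. The $\Gamma$-convergence $J_y=\Gamma\text{-}\lim_{n\to+\infty}J^n_y$ on $\mbf C(\OO_T;\mbb R)$ is exactly the content of Theorem \ref{JyGamma}. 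Invoking Theorem \ref{gammatheorem} with $F_n=J^n_y$ and $F=J_y$ then yields
\[
\min_{f\in\mbf C(\OO_T;\mbb R)} J_y(f)\;=\;\lim_{n\to+\infty}\inf_{f\in\mbf C(\OO_T;\mbb R)} J^n_y(f).
\]
By the representations in \eqref{IandIn}, the right-hand side equals $\lim_{n\to+\infty} I^n(y)$ and the left-hand side coincides with $I(y)=\inf_{f\in\mbf C(\OO_T;\mbb R)}J_y(f)$ (the infimum is attained as a consequence of the $\Gamma$-convergence limit being equi-coercive, whence $\min=\inf$). Since $y\in\mbb R$ was arbitrary, this establishes the pointwise convergence on all of $\mbb R$.

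No genuine obstacle is expected at this stage: all of the substantive analytic work — the qualitative properties of $\Upsilon$ and $\Upsilon^n$ in Section \ref{Sec4.1}, the discrete interpolation estimates behind the uniform boundedness of $\Upsilon^n(\mbb S_a)$ in Lemma \ref{unbounded}, and the invertibility-based construction of recovery sequences in the proof of Lemma \ref{gammasup} — has already been absorbed into the statements of Lemma \ref{equi} and Theorem \ref{JyGamma}. The only point that warrants care is the bookkeeping step of matching the conclusion of Theorem \ref{gammatheorem}, which is stated in terms of $\min$ over $X$, with the $\inf$ form used in the definition \eqref{IandIn} of $I(y)$; this is immediate because $\min_X J_y\le \inf_X J_y \le \min_X J_y$ whenever the minimum exists.
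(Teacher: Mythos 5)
Your proposal does not prove the assigned statement. Theorem \ref{gammatheorem} is the abstract fundamental theorem of $\Gamma$-convergence for arbitrary functionals $F_n,F:X\to\overline{\mbb R}$ on a metric space $X$; what you wrote is instead a proof of the paper's Theorem \ref{pointconvergence} (pointwise convergence of $I^n$ to $I$), and its key step is ``Invoking Theorem \ref{gammatheorem} with $F_n=J^n_y$ and $F=J_y$'' --- i.e.\ you assume exactly the result you were asked to establish, which is circular with respect to the assignment. (In the paper this theorem is not proved either; it is quoted from \cite[Theorem 7.8]{Gamma93}. A blind attempt should therefore supply the general argument, not an application of it to $J^n_y$, $J_y$.) The same issue appears in your parenthetical claim that the minimum of the $\Gamma$-limit is attained ``as a consequence of the $\Gamma$-convergence limit being equi-coercive'': attainment of $\min_X F$ is part of the conclusion of Theorem \ref{gammatheorem} and must itself be derived.

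A correct proof uses both hypotheses directly. First, for every $x\in X$ take a recovery sequence $x_n\to x$ with $\limsup_{n\to+\infty}F_n(x_n)\le F(x)$ (Definition \ref{Gammadef1}(2)); since $\inf_X F_n\le F_n(x_n)$, this gives $\limsup_{n\to+\infty}\inf_X F_n\le F(x)$, and taking the infimum over $x$ yields $\limsup_{n\to+\infty}\inf_X F_n\le\inf_X F$. Second, set $L:=\liminf_{n\to+\infty}\inf_X F_n$ (if $L=+\infty$ the first inequality already forces $\inf_X F=+\infty$ and there is nothing more to prove in the extended-real sense); choose a subsequence $\{n_k\}$ with $\inf_X F_{n_k}\to L$ and near-minimizers $x_{n_k}$ with $F_{n_k}(x_{n_k})\le\inf_X F_{n_k}+\tfrac1k$. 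For any $t>L$ these points eventually lie in the compact set $K_t$ of Definition \ref{equidef}, so a further subsequence converges to some $\bar x\in X$, and the liminf inequality (Definition \ref{Gammadef1}(1)) gives $F(\bar x)\le\liminf_{k\to+\infty}F_{n_k}(x_{n_k})\le L$. Combining, $\inf_X F\le F(\bar x)\le L\le\limsup_{n\to+\infty}\inf_X F_n\le\inf_X F$, so all quantities coincide, the limit $\lim_{n\to+\infty}\inf_X F_n$ exists, and $F(\bar x)=\inf_X F$ shows the infimum of $F$ is attained, i.e.\ $\min_X F=\lim_{n\to+\infty}\inf_X F_n$. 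This is the argument your proposal needed to contain; the application to $I$, $I^n$ via \eqref{IandIn}, Lemma \ref{equi} and Theorem \ref{JyGamma} that you wrote out belongs to Theorem \ref{pointconvergence}, not to this statement.
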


\setcounter{equation}{0}
\setcounter{subsection}{0}
\setcounter{Def}{0}
\renewcommand{\theDef}{B.\arabic{Def}}
\renewcommand{\theequation}{B.\arabic{equation}}
\renewcommand{\thesubsection}{B.\arabic{subsection}}
\subsection*{B: Regularity estimate for an ordinary differential equation}\label{AppenB}
In this part, we provide a regularity estimate for an ordinary differential equation which is equivalent to  the skeleton equation \eqref{DisSkeleton} to some sense. 

We introduce the discrete $L^2$-inner product and the discrete $L^p$-norm ($1\le p\le\infty$), respectively, as
\begin{align*}
	\langle a,b\rangle_{l^2_n}=\frac{\pi}{n}\sum_{i=1}^{n}a_ib_i,\qquad\|a\|_{l^p_n}=\begin{cases}\left(\frac{\pi}{n}\sum\limits_{i=1}^{n}|a_i|^p\right)^{\frac{1}{p}},\quad&1\le p<\infty,\\
		\sup\limits_{1\le j\le n}|a_i|,\quad&p=\infty\end{cases}
\end{align*}
for the vectors $a=(a_1,\ldots,a_{n})^\top$ and $b=(b_1,\ldots,b_{n})^\top$. By the H\"older inequality, one immediate has that for  $p\ge 1$ and $q\in[p,+\infty]$, there is $K(p,q)>0$ independent of $n$ such that for any $a\in\mbb R^n$,
\begin{align}\label{lnnorm}
	\|a\|_{l_n^p}\le K(p,q)\|a\|_{l_n^q}.
\end{align}
The following estimates, viewed as discrete interpolation inequalities, can be proved similar to \cite[Lemma 3.1]{Sheng1}.
\begin{pro}\label{Disinterpo}
	Let $2\le p\le\infty$ and $t>0$. Then it holds for all $a=(a_1,\ldots,a_n)\in l_n^\infty$, $n\ge1$ that
	\begin{gather}\label{l2H1}
		\|a\|_{l^\infty_n}\le\sqrt{\pi}\|a\|_{l_n^2}+\|(- A_n)^{\frac{1}{2}}a\|_{l^2_n},\\\label{l6h2}
		\|a\|_{l_n^6}\le C\| A_na\|_{l^2_n}^{\frac{1}{6}}\|a\|^{\frac{5}{6}}_{l_n^2}+C\|a\|_{l_n^2},\\\label{interpolation}
		\|e^{- A_n^2t}a\|_{l_n^p}\le (1+Ct^{-\frac{1}{4}(\frac{1}{2}-\frac{1}{p})})\|a\|_{l_n^2},
	\end{gather}
	where $C>0$ is a constant independent of $a$, $n$ and $t>0$.
\end{pro}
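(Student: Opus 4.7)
The three estimates play the role of a discrete Sobolev embedding $H^{1}\hookrightarrow L^{\infty}$, a discrete Gagliardo--Nirenberg inequality, and the ultracontractive smoothing of the bi-Laplacian semigroup $e^{-tA_{n}^{2}}$. My plan is to prove them in that order, bootstrapping each from its predecessor. Two tools enter throughout. First, the summation by parts identity
\[
\|(-A_{n})^{1/2}a\|_{l_{n}^{2}}^{2}=\langle -A_{n}a,a\rangle_{l_{n}^{2}}=\frac{n}{\pi}\sum_{i=1}^{n-1}(a_{i+1}-a_{i})^{2},
\]
which identifies $\|(-A_{n})^{1/2}a\|_{l_{n}^{2}}$ with the discrete $H^{1}$-seminorm. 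Second, the $l_{n}^{2}$-orthonormal eigenbasis $\tilde e_{j}(k)=\phi_{j}(x_{k})$, $j=0,\ldots,n-1$, satisfying $A_{n}\tilde e_{j}=\lambda_{j,n}\tilde e_{j}$, $|\phi_{j}|\le\sqrt{2/\pi}$, and $\lambda_{j,n}^{2}\ge 16 j^{4}/\pi^{4}$ (since $c_{j,n}\ge 4/\pi^{2}$).

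For \eqref{l2H1}, start from the telescoping identity $a_{k}^{2}=a_{l}^{2}+\sum_{i=l}^{k-1}(a_{i+1}-a_{i})(a_{i+1}+a_{i})$ (with the obvious sign change for $k<l$); Cauchy--Schwarz together with the elementary bound $\sum_{i=1}^{n-1}(a_{i+1}+a_{i})^{2}\le\frac{4n}{\pi}\|a\|_{l_{n}^{2}}^{2}$ yields $a_{k}^{2}\le a_{l}^{2}+2\|a\|_{l_{n}^{2}}\|(-A_{n})^{1/2}a\|_{l_{n}^{2}}$. Choosing $l$ to minimize $|a_{l}|$ and using $\min_{k}|a_{k}|^{2}\le\frac{1}{\pi}\|a\|_{l_{n}^{2}}^{2}$ produce $\|a\|_{l_{n}^{\infty}}^{2}\le\frac{1}{\pi}\|a\|_{l_{n}^{2}}^{2}+2\|a\|_{l_{n}^{2}}\|(-A_{n})^{1/2}a\|_{l_{n}^{2}}$, which, because $\pi\ge 1$, is dominated by $(\sqrt{\pi}\|a\|_{l_{n}^{2}}+\|(-A_{n})^{1/2}a\|_{l_{n}^{2}})^{2}$; this gives \eqref{l2H1}. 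For \eqref{l6h2}, combine the elementary interpolation $\|a\|_{l_{n}^{6}}\le\|a\|_{l_{n}^{\infty}}^{2/3}\|a\|_{l_{n}^{2}}^{1/3}$ with \eqref{l2H1} and the Cauchy--Schwarz bound $\|(-A_{n})^{1/2}a\|_{l_{n}^{2}}^{2}\le\|A_{n}a\|_{l_{n}^{2}}\|a\|_{l_{n}^{2}}$; collecting exponents produces $\|a\|_{l_{n}^{6}}\le C\|a\|_{l_{n}^{2}}+C\|A_{n}a\|_{l_{n}^{2}}^{1/6}\|a\|_{l_{n}^{2}}^{5/6}$.

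For \eqref{interpolation}, expand $a=\sum_{j}\hat a_{j}\tilde e_{j}$ so that $(e^{-A_{n}^{2}t}a)_{k}=\sum_{j}e^{-\lambda_{j,n}^{2}t}\hat a_{j}\phi_{j}(x_{k})$; Cauchy--Schwarz then gives
\[
\|e^{-A_{n}^{2}t}a\|_{l_{n}^{\infty}}^{2}\le\|a\|_{l_{n}^{2}}^{2}\Bigl(\frac{1}{\pi}+\frac{2}{\pi}\sum_{j\ge 1}e^{-2\lambda_{j,n}^{2}t}\Bigr).
\]
Using $\lambda_{j,n}^{2}\ge 16j^{4}/\pi^{4}$, the tail sum is controlled by comparison with $\int_{0}^{\infty}e^{-Cx^{4}t}\,\ud x=C_{0}t^{-1/4}$, producing $\|e^{-A_{n}^{2}t}a\|_{l_{n}^{\infty}}\le C(1+t^{-1/8})\|a\|_{l_{n}^{2}}$. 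The log-convex interpolation $\|b\|_{l_{n}^{p}}\le\|b\|_{l_{n}^{\infty}}^{1-2/p}\|b\|_{l_{n}^{2}}^{2/p}$, together with the $l_{n}^{2}$-contractivity of $e^{-A_{n}^{2}t}$ and the elementary estimate $(1+t^{-1/8})^{1-2/p}\le 2(1+t^{-(1-2/p)/8})$, closes \eqref{interpolation} after noting $(1-2/p)/8=\frac{1}{4}(\frac{1}{2}-\frac{1}{p})$. The main delicacy will be a clean bookkeeping of the Neumann boundary rows of $A_{n}$ so that the summation by parts identity above is a genuine equality with no stray boundary contributions; once that is in place, the three estimates reduce to standard discrete calculus and the Gaussian-type tail bound for $\sum e^{-Cj^{4}t}$.
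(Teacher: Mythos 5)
Your arguments for \eqref{l2H1} and \eqref{interpolation} are correct. (The paper itself gives no proof here, deferring to the Dirichlet-case analogue \cite[Lemma 3.1]{Sheng1}, so a self-contained argument is reasonable.) The summation-by-parts identity $\langle -A_na,a\rangle_{l_n^2}=\frac{n}{\pi}\sum_{i=1}^{n-1}(a_{i+1}-a_i)^2$, the choice of the index minimizing $|a_l|$, the spectral bound $\lambda_{j,n}^2\ge 16j^4/\pi^4$ and the comparison of $\sum_{j\ge1}e^{-2\lambda_{j,n}^2t}$ with $\int_0^\infty e^{-cx^4t}\,\ud x\sim t^{-1/4}$ are all sound; moreover the leading constant $1$ in \eqref{interpolation} does come out of your computation, since the $l_n^\infty$ step actually produces the prefactor $1/\sqrt{\pi}<1$ and $(1+x)^{\theta}\le 1+x^{\theta}$ for $\theta\in[0,1]$ preserves it through the $\log$-convex interpolation (your extra factor $2$ is unnecessary but harmless).

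The derivation of \eqref{l6h2}, however, does not give the stated exponent $1/6$ as written. Chaining $\|a\|_{l_n^6}\le\|a\|_{l_n^\infty}^{2/3}\|a\|_{l_n^2}^{1/3}$ with the \emph{additive} bound \eqref{l2H1} and $\|(-A_n)^{1/2}a\|_{l_n^2}\le\|A_na\|_{l_n^2}^{1/2}\|a\|_{l_n^2}^{1/2}$ yields, after $(x+y)^{2/3}\le x^{2/3}+y^{2/3}$, only $\|a\|_{l_n^6}\le C\|a\|_{l_n^2}+C\|A_na\|_{l_n^2}^{1/3}\|a\|_{l_n^2}^{2/3}$; the exponent is $1/3$, not $1/6$, which is strictly weaker whenever $\|A_na\|_{l_n^2}\gg\|a\|_{l_n^2}$. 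This is not a cosmetic loss: in Step 3 of Proposition \ref{Vbound} one needs $\|V(s)\|_{l_n^6}^3\le K\big(1+\|A_nV(s)\|_{l_n^2}^{1/2}\big)$ so that the H\"older step with exponents $(4/3,4)$ leaves the integrable kernel $(t-s)^{-7/9}$; with the power $1/3$ one would face $\|A_nV(s)\|_{l_n^2}$ to the first power and the kernel $(t-s)^{-7/6}$, which is not integrable, so the weaker inequality cannot replace \eqref{l6h2}. The fix is already contained in your proof of \eqref{l2H1}: keep the multiplicative intermediate estimate $\|a\|_{l_n^\infty}^2\le\frac1\pi\|a\|_{l_n^2}^2+2\|a\|_{l_n^2}\|(-A_n)^{1/2}a\|_{l_n^2}$ rather than weakening it to the additive form, insert $\|(-A_n)^{1/2}a\|_{l_n^2}\le\|A_na\|_{l_n^2}^{1/2}\|a\|_{l_n^2}^{1/2}$ there, and take the power $1/3$ to get $\|a\|_{l_n^\infty}^{2/3}\le\pi^{-1/3}\|a\|_{l_n^2}^{2/3}+2^{1/3}\|a\|_{l_n^2}^{1/2}\|A_na\|_{l_n^2}^{1/6}$; multiplying by $\|a\|_{l_n^2}^{1/3}$ then gives exactly \eqref{l6h2}.
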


Given $\nu\in\R$, define $(-\dot A_n)^{\nu}:\R^n\rightarrow\R^n$ by $$(-\dot A_n)^\nu a
%=(- A_n)^\nu a-\frac{1}{n}\sum_{j=1}^{n}V_j^n(t)\otimes \mathbbm{1}
=\sum_{k=1}^{n-1} (-\lambda_{k,n})^{\nu}\langle a,e_k\rangle  e_k,\quad\forall~a\in\R^n,$$
which coincides with $(- A_n)^\nu a=\sum_{k=0}^{n-1} (-\lambda_{k,n})^{\nu}\langle a,e_k\rangle  e_k$ when $\nu>0$.
Besides, it is readily to verify that for $\mu\ge 0$ and $\nu\in\mbb R$,
\begin{align}\label{eq1}
	\langle(- A_n)^{\mu} a,(-\dot A_n)^{\nu}b\rangle  =\langle (-\dot A_n)^{\mu+\nu}a,b\rangle  ,\quad\forall~ a,b\in\R^n.
\end{align}
We also denote $\mbb L a:=\frac{1}{\sqrt{n}}\langle a,e_0\rangle=\frac{1}{n}\sum_{k=1}^n{a_k}$ for $a=(a_1,\ldots,a_n)\in\R^n$. 
Then it holds that for any $a\in\mbb R^n$,
\begin{align}\label{mbbL}
	\sqrt{n}|\mbb L a|\le |a|.
\end{align}
In addition, by the \eqref{eq1}, we have
\begin{align}\label{LaLb}
	\langle (-\dot A_n)^{-1}a, A_nb\rangle=-\langle a,b\rangle+n\mbb L a\cdot\mbb L b,\quad\forall~a,b\in\R^n.
\end{align}

\begin{pro}\label{Vbound}
	
	Let $V\in\mbf C([0,T];\mbb R^n)$ be the solution to the following differential equation
	\begin{align}\label{SDE}
		\begin{cases}
			\dot{V}(t)=-A_n^2V(t)+A_nB_n(V(t))+\sqrt{\frac{n}{\pi}}\Sigma_n(V(t))R(t), ~t\in(0,T],\\
			V(0)=U^n(0),
		\end{cases}
	\end{align} 
	with $R\in\mbf L^2(0,T;\mbb R^n)$. Let Assumptions \ref{assum1}-\ref{assum2} be satisfied. Then for any $a\ge 0$ and $R\in\mbf L^2(0,T;\mbb R^n)$ with $\|R\|_{\mbf L^2(0,T)}\le a$, there is $K(a,T)>0$ independent of $n$ such that
	\begin{align*}
		\int_0^T\|A_nV(t)\|_{l_n^2}^2\ud t+\sup_{t\in[0,T]}\|V(t)\|_{l_n^\infty}\le K(a,T).
	\end{align*}
\end{pro}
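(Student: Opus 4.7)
The plan is to derive the two required estimates in sequence: first an $l_n^2$-energy estimate producing the $L^2_t$-bound on $\|A_n V\|_{l_n^2}$ together with an $L^\infty_t$-bound on $\|V\|_{l_n^2}$; then a Cahn--Hilliard free-energy estimate giving $\sup_t\|(-A_n)^{1/2}V\|_{l_n^2}$; and finally the discrete Sobolev interpolation \eqref{l2H1} to upgrade to $\sup_t\|V\|_{l_n^\infty}$. For the first step, I would test \eqref{SDE} with $V$ in the $l_n^2$-inner product, bounding $\langle A_n B_n(V),V\rangle_{l_n^2}\le \|(-A_n)^{1/2}V\|_{l_n^2}^2$ via the discrete Young inequality $a^3 b\le \frac{3}{4} a^4 + \frac{1}{4} b^4$ (already exploited in the paper to prove $\langle V, A_n V^3\rangle\le 0$), and further by $\frac{1}{4}\|A_n V\|_{l_n^2}^2 + \|V\|_{l_n^2}^2$ through Cauchy--Schwarz and Young. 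The $\sqrt{n/\pi}$ scaling of the noise term in \eqref{SDE} cancels the $\pi/n$ weight of the $l_n^2$-inner product precisely, leaving $\|\sigma\|_{\mbf L^\infty}\|V\|_{l_n^2}|R|$, which is absorbed via Young. Gronwall then yields, uniformly in $n$,
\begin{equation*}
\sup_{t\in[0,T]}\|V(t)\|_{l_n^2}^2 + \int_0^T\|A_n V(s)\|_{l_n^2}^2\,\ud s\le K(a,T,u_0),
\end{equation*}
which already settles the first summand of the claim.

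For the second step I would introduce the discrete Cahn--Hilliard free energy $\mcal E_n(V):=\frac{1}{2}\|(-A_n)^{1/2}V\|_{l_n^2}^2 + \frac{\pi}{n}\sum_{k=1}^n F(V_k)$ with $F(x) = (x^2-1)^2/4$. A direct computation yields $\nabla\mcal E_n(V) = \frac{\pi}{n}W$ with $W := -A_n V + B_n(V)$, so that \eqref{SDE} takes the perturbed gradient-flow form $\dot V = \frac{n}{\pi}A_n\nabla\mcal E_n(V) + \sqrt{n/\pi}\,\Sigma_n(V)R$. The chain rule then gives
\begin{equation*}
\frac{d}{dt}\mcal E_n(V(t)) = -\|(-A_n)^{1/2}W(t)\|_{l_n^2}^2 + \sqrt{\pi/n}\,\langle W(t),\Sigma_n(V(t))R(t)\rangle,
\end{equation*}
and the noise contribution is at most $\|\sigma\|_{\mbf L^\infty}\|W\|_{l_n^2}|R|$ after the scaling is unwound. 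The discrete Poincar\'e inequality, whose constant is $n$-uniform thanks to $\lambda_{1,n}\ge 4/\pi^2$, gives $\|W\|_{l_n^2}^2\le C\|(-A_n)^{1/2}W\|_{l_n^2}^2 + \pi|\mbb L W|^2$; since $\mbb L(A_n V)=0$, one has $\mbb L W = \mbb L B_n(V)$, and Cauchy--Schwarz together with the built-in coercivity $\|V\|_{l_n^4}^4\le 8\mcal E_n(V) + 4\pi$ and Step~1's $L^\infty_t$-bound on $\|V\|_{l_n^2}$ yield the crucial \emph{linear-in-$\mcal E_n$} bound $|\mbb L B_n(V)|^2\le C(1+\mcal E_n(V))$. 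Young's inequality then absorbs the $\|(-A_n)^{1/2}W\|_{l_n^2}^2$-part of the noise estimate into the dissipation, leading to the \emph{linear} differential inequality $\frac{d}{dt}\mcal E_n\le C\mcal E_n + C(1+|R|^2)$. Together with the uniform initial bound $\mcal E_n(V(0))\le K(u_0)$, which follows from $u_0\in\mbf C^1$ and the summation-by-parts identity $\|(-A_n)^{1/2}U^n(0)\|_{l_n^2}^2\le \pi\|u_0'\|_{\mbf L^\infty}^2$, Gronwall gives $\sup_{t\in[0,T]}\mcal E_n(V(t))\le K(a,T,u_0)$.

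The conclusion is then immediate via \eqref{l2H1}: $\|V(t)\|_{l_n^\infty}\le \sqrt{\pi}\|V(t)\|_{l_n^2} + \|(-A_n)^{1/2}V(t)\|_{l_n^2}\le \sqrt{\pi}\|V(t)\|_{l_n^2} + \sqrt{2\mcal E_n(V(t))}$, so Steps~1 and 2 together give $\sup_{t\in[0,T]}\|V(t)\|_{l_n^\infty}\le K(a,T,u_0)$. The main obstacle is the second step: the non-one-side-Lipschitz cubic drift would, under a naive Cauchy--Schwarz on $|\mbb L V^3|$, force a bound $|\mbb L W|^2\le C\mcal E_n^{3/2}$ and produce a superlinear Gronwall inequality with potential finite-time blow-up; only by exploiting \emph{both} the double-well coercivity of $F$ (controlling $\|V\|_{l_n^4}^4$ linearly in $\mcal E_n$) \emph{and} Step~1's $L^\infty_t$-bound on $\|V\|_{l_n^2}$ does the bound remain linear in $\mcal E_n$. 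Uniformity in $n$ throughout rests on the $n$-independent spectral bounds $4/\pi^2\le c_{j,n}\le 1$ and the discrete interpolation inequalities of Proposition~\ref{Disinterpo}.
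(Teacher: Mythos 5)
Your proposal is correct, and the estimates I checked (the $l_n^2$ energy identity with the cancellation $\sqrt{n/\pi}\,\langle \Sigma_n(V)R,V\rangle_{l_n^2}\le \|\sigma\|_{\mbf L^\infty}|R|\,\|V\|_{l_n^2}$, the gradient identity $\nabla\mcal E_n(V)=\frac{\pi}{n}(-A_nV+B_n(V))$, the discrete Poincar\'e inequality with $n$-uniform constant coming from $-\lambda_{1,n}=c_{1,n}\ge 4/\pi^2$, the coercivity $\|V\|_{l_n^4}^4\le 8\mcal E_n(V)+4\pi$, and the resulting linear-in-$\mcal E_n$ bound on $|\mbb L B_n(V)|^2$ using the Step-1 bound on $\sup_t\|V\|_{l_n^2}$) all go through, so the argument closes. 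It is, however, a genuinely different route from the paper's. The paper proceeds in four steps: an $H^{-1}$-type estimate obtained by testing with $(-\dot A_n)^{-1}V$ (controlling the mean mode via $\mbb L$ and \eqref{LaLb}), which yields $\int_0^T\|(-A_n)^{1/2}V\|_{l_n^2}^2$ and $\int_0^T\|V\|_{l_n^4}^4$; then the $l_n^2$ test using \eqref{xBf} to get $\sup_t\|V\|_{l_n^2}$ and $\int_0^T\|A_nV\|_{l_n^2}^2$; then a bootstrap through the mild (variation-of-constants) form using the semigroup smoothing \eqref{smooth}, the $l_n^p$-smoothing \eqref{interpolation} and the interpolation \eqref{l6h2} to reach $\sup_t\|V\|_{l_n^6}$; and finally a second pass through the mild form to bound $\sup_t\|(-A_n)^{1/2}V\|_{l_n^2}$ before invoking \eqref{l2H1}. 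You instead obtain the $l_n^2$/$\|A_nV\|$ bound in one stroke (Young plus Gronwall, without the paper's $H^{-1}$ step), and replace the entire mild-form bootstrap by a single Lyapunov estimate for the discrete Cahn--Hilliard free energy, where the perturbed gradient-flow structure supplies the dissipation $-\|(-A_n)^{1/2}W\|_{l_n^2}^2$ and the only delicate point — the zero-mean obstruction in the Poincar\'e inequality — is handled exactly where the paper handles it too, through $\mbb L A_n=0$, but now combined with the double-well coercivity and the prior $l_n^2$ bound to keep the Gronwall inequality linear. Your approach is shorter and avoids \eqref{l6h2}, \eqref{interpolation} and \eqref{smooth} altogether (only \eqref{l2H1} is needed), at the price of introducing and differentiating the energy functional; the paper's approach stays with quadratic test energies and semigroup smoothing and, as a by-product, also produces the intermediate $L^4_t l_n^4$ and $\sup_t l_n^6$ bounds. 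Two cosmetic remarks: you mean $-\lambda_{1,n}\ge 4/\pi^2$ rather than $\lambda_{1,n}\ge 4/\pi^2$, and when absorbing the noise term you should record that the Young constants leave a strictly positive fraction of the dissipation, which your sketch implicitly does.
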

\begin{proof}
	Fix $R\in\mbf L^2(0,T;\mbb R^n)$ with $\|R\|_{\mbf L^2(0,T)}\le a$.	We divide the proof into four steps.
	
	\textbf{Step $1$: We show that
		\begin{align}\label{VH1}
			\int_{0}^T\|(-A_n)^{\frac{1}{2}}V(t)\|_{l_n^2}^2\ud t+\int_0^T\|V(t)\|_{l_n^4}^4\ud t\le K(a,T).
		\end{align}
	}

	Noting that $\mbb LA_na=0$, for any $a\in\mbb R^n$, we deduce from \eqref{SDE} that
	\begin{align*}
		\mbb L\dot{V}(t)=\sqrt{\frac{n}{\pi}}\mbb L(\Sigma_n(V(t))R(t)).
	\end{align*}
	Then using \eqref{mbbL}, $u_0\in\mbf C(\OO;\mbb R)$  and the boundedness of $\sigma$ yields 
	\begin{align}\label{LV}
		\mbb LV(t)&=\mbb LU^n(0)+\sqrt{\frac{n}{\pi}}\int_0^t\mbb L(\Sigma_n(V(s))R(s))\ud s\le \|u_0\|_{\mbf C(\OO)}+K\int_0^t|R(s)|\ud s\nonumber\\
		&\le  \|u_0\|_{\mbf C(\OO)}+K(T)\|R\|_{\mbf L^2(0,T)}\le K(a,T).
	\end{align}
	It follows from $|b(x)|\le K(1+|x|^3)$, $\LL \cdot,\cdot\RR_{l_n^2}=\frac{\pi}{n}\LL\cdot,\cdot\RR$, \eqref{lnnorm}, \eqref{LaLb}, \eqref{LV} and the Young inequality that for any $\eta\in(0,1)$,
	\begin{align}\label{eq3}
		&\quad\LL A_nB_n(V(t)),(-\dot{A}_n)^{-1}V(t)\RR_{l_n^2}=-\frac{\pi}{n}\LL B_n(V(t)),V(t)\RR+\pi\mbb LB_n(V(t))\cdot\mbb LV(t)\nonumber\\
		&\le -\frac{\pi}{n}\sum_{j=1}^n(V_j^4(t)-V_j^2(t))+\frac{K(a,T)}{n}\sum_{j=1}^n(1+|V_j(t)|^3)\nonumber\\
		&\le - \|V(t)\|_{l_n^4}^4+K(a,T)(1+\|V(t)\|^2_{l_n^4}+\|V(t)\|^3_{l_n^4})\nonumber\\
		&\le-(1-\eta)\|V(t)\|_{l_n^4}^4+K(\eta,a,T).
	\end{align} 
	By the boundedness of $\sigma$ and  $\LL \cdot,\cdot\RR_{l_n^2}=\frac{\pi}{n}\LL\cdot,\cdot\RR$, \begin{align}\label{Rt}
		\sqrt{\frac{n}{\pi}}\|\Sigma_n(V(t))R(t)\|_{\l_n^2}\le K\sqrt{\frac{n}{\pi}}\|R(t)\|_{\l_n^2}\le K|R(t)|,
	\end{align}
	which together with $\|(-\dot{A}_n)^{-1}\cdot\|_{l_n^2}\le K\|(-\dot{A}_n)^{-\frac{1}{2}}\cdot\|_{l_n^2}$ produces
	\begin{align}\label{eq2}
		&\quad\sqrt{\frac{n}{\pi}}\LL\Sigma_n(V(t))R(t), (-\dot{A}_n)^{-1}V(t)\RR_{l_n^2}\le 	\sqrt{\frac{n}{\pi}}\|\Sigma_n(V(t))R(t)\|_{\l_n^2} \|(-\dot{A}_n)^{-1}V(t)\|_{\l_n^2}\notag\\
		&\le K|R(t)|\|(-\dot{A}_n)^{-\frac{1}{2}}V(t)\|_{\l_n^2}\le K|R(t)|^2+K\|(-\dot{A}_n)^{-\frac{1}{2}}V(t)\|_{\l_n^2}^2.
	\end{align}
	
	Taking the inner product $\LL\cdot,(-\dot{A}_n)^{-1}V(t)\RR_{l_n^2}$ on both sides of \eqref{SDE} and using \eqref{eq3}-\eqref{eq2}, we obtain that for any $\eta\in(0,1)$,
	\begin{align*}
		&\frac{1}{2}\;\frac{\ud }{\ud t}\|(-\dot{A}_n)^{-\frac{1}{2}}V(t)\|_{\l_n^2}^2=\LL \dot{V}(t),(-\dot{A}_n)^{-1}V(t)\RR_{l_n^2}\\
		=&\;\LL-A_n^2V(t),(-\dot{A}_n)^{-1}V(t)\RR_{l_n^2}+\LL A_nB_n(V(t)),(-\dot{A}_n)^{-1}V(t)\RR_{l_n^2}+\sqrt{\frac{n}{\pi}}\LL\Sigma_n(V(t))R(t), (-\dot{A}_n)^{-1}V(t)\RR_{l_n^2}\\
		\le &\;-\|(-A_n)^{\frac{1}{2}}V(t)\|_{l_n^2}^2-(1-\eta)\|V(t)\|_{l_n^4}^4+K(\eta,a,T)\big(1+|R(t)|^2+\|(-\dot{A}_n)^{-\frac{1}{2}}V(t)\|_{\l_n^2}^2\big).
	\end{align*}
	Taking $\eta=\frac{1}{2}$ in the above formula and using $\|R\|_{\mbf L^2(0,T)}\le a$, one has
	\begin{align*}
		&\;\frac{1}{2}\|(-\dot{A}_n)^{-\frac{1}{2}}V(t)\|_{\l_n^2}^2+\int_0^t\|(-A_n)^{\frac{1}{2}}V(s)\|_{l_n^2}^2\ud s+\frac{1}{2}\int_0^t\|V(s)\|_{l_n^4}^4\ud s\\
		\le&\;\frac{1}{2}\|(-\dot{A}_n)^{-\frac{1}{2}}U^n(0)\|_{l_n^2}^2+K(a,T)\int_0^t\|(-\dot{A}_n)^{-\frac{1}{2}}V(s)\|_{\l_n^2}^2\ud s+K(a,T).
	\end{align*}
	Then using $\|(-\dot{A}_n)^{-\frac{1}{2}}U^n(0)\|_{l_n^2}\le\|U^n(0)\|_{l_n^\infty}\le \|u_0\|_{\mbf C(\OO)}$ yields
	\begin{align}\label{eq4}
		&\;\|(-\dot{A}_n)^{-\frac{1}{2}}V(t)\|_{\l_n^2}^2+2\int_0^t\|(-A_n)^{\frac{1}{2}}V(s)\|_{l_n^2}^2\ud s+\int_0^t\|V(s)\|_{l_n^4}^4\ud s\nonumber\\
		\le&\;K(a,T)+K(a,T)\int_0^t\|(-\dot{A}_n)^{-\frac{1}{2}}V(s)\|_{\l_n^2}^2\ud s.
	\end{align} 
	By the Gronwall inequality, it holds that  $\sup\limits_{t\in[0,T]}\|(-\dot{A}_n)^{-\frac{1}{2}}V(t)\|_{\l_n^2}\le K(a,T)$, which along with \eqref{eq4} gives
	\begin{align*}
		\int_0^T\|(-A_n)^{\frac{1}{2}}V(t)\|_{l_n^2}^2\ud t+\int_0^T\|V(t)\|_{l_n^4}^4\ud t\le K(a,T).
	\end{align*}
	
	\textbf{Step $2$: we prove 
		\begin{align}\label{VH2}
			\sup_{t\in[0,T]}\|V(t)\|_{l_n^2}+\int_0^T\|A_nV(t)\|_{l_n^2}^2\ud t\le K(a,T).
		\end{align}
	}
	
	Taking the inner product $\LL\cdot,V(t)\RR$ on both sides of \eqref{SDE} yields
	\begin{align*}
		\frac{1}{2}\frac{\ud }{\ud t}\|V(t)\|_{l_n^2}+\|A_nV(t)\|_{l_n^2}^2=\LL A_nB_n(V(t)),V(t)\RR_{l_n^2}+\sqrt{\frac{n}{\pi}}\LL\Sigma_n(V(t))R(t),V(t)\RR_{l_n^2}.
	\end{align*}
	It follows from the above formula, \eqref{xBf}, \eqref{lnnorm},  \eqref{Rt} and the Young inequality  that
	\begin{align*}
		&\;	\frac{1}{2}\frac{\ud }{\ud t}\|V(t)\|^2_{l_n^2}+\|A_nV(t)\|_{l_n^2}^2\le -\LL A_nV(t),V(t)\RR_{l_n^2}+K|R(t)|\|V(t)\|_{l_n^2}\nonumber\\
		\le &\;\|(-A_n)^{\frac12}V(t)\|_{l_n^2}^2+K|R(t)|^2+K\|V(t)\|_{l_n^2}^2,\\
		\le &\;\|(-A_n)^{\frac12}V(t)\|_{l_n^2}^2+K|R(t)|^2+K\|V(t)\|_{l_n^4}^4+K.
	\end{align*}
	Combining the above formula and \eqref{VH1}, we have that for ant $t\in[0,T]$,
	\begin{align*}
		\|V(t)\|^2_{l_n^2}+2\int_0^t\|A_nV(s)\|_{l_n^2}^2\ud s&\le \|U^n(0)\|_{l_n^2}^2+2\int_0^T\|(-A_n)^{\frac12}V(t)\|_{l_n^2}^2\ud t+K\int_0^T\|V(t)\|_{l_n^4}^4\ud t+K(a,T)\le K(a,T),
	\end{align*}
	where we have used $\|U^n(0)\|_{l_n^\infty}\le \|u_0\|_{\mbf C(\OO)}$.
	
	\textbf{Step $3$: we prove $\sup\limits_{t\in[0,T]}\|V(t)\|_{l_n^6}\le K(a,T)$.
	}
	
	Using the variation of constant formula, we derive from \eqref{SDE} that
	\begin{align}\label{Vt}
		V(t)=&\;e^{-A_n^2t}U^n(0)+\int_0^te^{-A_n^2(t-s)}A_nB_n(V(s))\ud s\nonumber\\
		&\;+\sqrt{\frac{n}{\pi}}\int_{0}^te^{-A_n^2(t-s)}\Sigma_n(V(s))R(s)\ud s,~t\in[0,T].
	\end{align}
	Thus, 
	\begin{align}\label{eq5}
		\|V(t)\|_{l_n^6}\le&\; \|e^{-A_n^2t}U^n(0)\|_{l_n^6}+\int_{0}^t\|e^{-A_n^2(t-s)}A_nB_n(V(s))\|_{l_n^6}\ud s\nonumber\\
		&\;+\sqrt{\frac{n}{\pi}}\int_{0}^t\|e^{-A_n^2(t-s)}\Sigma_n(V(s))R(s)\|_{l_n^6}\ud s.
	\end{align}
	A direct computation leads to
	\begin{align}\label{Un0H1}
		\|(-A_n)^{\frac12}U^n(0)\|_{l_n^2}^2=\frac{n}{\pi}\sum_{j=1}^{n-1}|u_0(x_{j+1})-u_0(x_j)|^2\le K\|u_0'\|^2_{\mbf C(\OO)},
	\end{align}
	which along with \eqref{l2H1} and $|e^{-A_n^2t}|\le 1$ yields
	\begin{align}\label{Un0}
		&\phantom{\le}\|e^{-A_n^2t}U^n(0)\|_{l_n^6}\le \|e^{-A_n^2t}U^n(0)\|_{l_n^\infty}\nonumber\\
		&\le K\|e^{-A_n^2t}U^n(0)\|_{l_n^2}+	\|e^{-A_n^2t}(-A_n)^{\frac12}U^n(0)\|_{l_n^2}\le K.
	\end{align}
	By the spectrum mapping theorem and the symmetry of $ A_n$, we see that for $\gamma>0$,
	\begin{equation}\label{smooth}
		|e^{-\frac{1}{2} A_n^2(t-s)}(- A_n)^{\gamma}|=\max_{1\le j\le n-1}e^{-\frac{1}{2}\lambda_{j,n}^2(t-s)}(-\lambda_{j,n})^\gamma\le C(t-s)^{-\frac{\gamma}{2}},
	\end{equation}
	since $x\mapsto x^\gamma e^{-x^2}$ is uniformly bounded on $[0,\infty)$. 
	It follows from \eqref{interpolation} and \eqref{smooth} that
	\begin{align}\label{eq6}
		&\;\int_{0}^t\|e^{-A_n^2(t-s)}A_nB_n(V(s))\|_{l_n^6}\ud s=\int_{0}^t\|e^{-\frac{1}{2}A_n^2(t-s)}e^{-\frac{1}{2}A_n^2(t-s)}A_nB_n(V(s))\|_{l_n^6}\ud s\nonumber\\
		\le &\; K\int_0^t(1+(t-s)^{-\frac{1}{12}})\|e^{-\frac{1}{2}A_n^2(t-s)}A_nB_n(V(s))\|_{l_n^2}\ud s\nonumber\\
		\le& \;K(T)\int_0^t(t-s)^{-\frac{7}{12}}\|B_n(V(s))\|_{l_n^2}\ud s\nonumber\\
		\le &\;K(T)\int_0^t(t-s)^{-\frac{7}{12}}(1+\|V(s)\|_{l_n^6}^3)\ud s,
	\end{align}
	where we have used the fact $1\le K(T)(t-s)^{-\frac{1}{12}}$ for $0\le s<t\le T$. Further, applying \eqref{l6h2} and \eqref{VH2} gives that for any $t\in[0,T]$,
	\begin{align}\label{eq7}
		\|V(t)\|_{l_n^6}\le K\big(\|A_nV(t)\|_{l_n^2}^{\frac16}\|V(t)\|_{l_n^2}^{\frac56}+\|V(t)\|_{l_n^2}\big)\le K(a,T)\big(1+\|A_nV(t)\|_{l_n^2}^{\frac16}\big).
	\end{align}
	Substituting \eqref{eq7} into \eqref{eq6} and using the H\"older inequality and \eqref{VH2}, we arrive at
	\begin{align}\label{eq8}
		&\;\int_{0}^t\|e^{-A_n^2(t-s)}A_nB_n(V(s))\|_{l_n^6}\ud s\le K(T)\int_0^t(t-s)^{-\frac{7}{12}}\big(1+\|A_nV(s)\|_{l_n^2}^\frac{1}{2}\big)\ud s\nonumber\\
		\le &\;K(T)\bigg(\int_0^t(t-s)^{-\frac{7}{9}}\ud s\bigg)^{\frac34}\bigg(\int_0^t\big(1+\|A_nV(s)\|_{l_n^2}^{2}\big)\ud s\bigg)^{\frac14}\le K(a,T).
	\end{align}
	By \eqref{interpolation}, \eqref{Rt} and the H\"older inequality,
	\begin{align}\label{eq9}
		&\;\sqrt{\frac{n}{\pi}}\int_{0}^t\|e^{-A_n^2(t-s)}\Sigma_n(V(s))R(s)\|_{l_n^6}\ud s\nonumber\
		\le K\sqrt{\frac{n}{\pi}}	\int_{0}^t(1+(t-s)^{-\frac{1}{12}})\|\Sigma_n(V(s))R(s)\|_{l_n^2}\ud s\nonumber\\
		\le&\;K	\int_{0}^t(1+(t-s)^{-\frac{1}{12}})|R(s)|\ud s\le K\big(\int_{0}^t(1+(t-s)^{-\frac{1}{6}})\ud s\big)^{\frac{1}{2}}\|R\|_{\mbf L^2(0,T)}\le K(a,T).
	\end{align}
	Substituting \eqref{Un0}, \eqref{eq8} and \eqref{eq9} into \eqref{eq5}, we obtain $\sup\limits_{t\in[0,T]}\|V(t)\|_{l_n^6}\le K(a,T)$.
	
	\textbf{Step $4$: we prove $\sup\limits_{t\in[0,T]}\|V(t)\|_{l_n^\infty}\le K(a,T)$.
	}
	By \eqref{Rt}, \eqref{Vt}, \eqref{Un0H1}, \eqref{smooth} and the result of the third step,
	\begin{align*}
		\|(-A_n)^{\frac12}V(t)\|_{l_n^2}&\le \|e^{-A_n^2t}(-A_n)^{\frac12}U^n(0)\|_{l_n^2}+\int_0^t\|e^{-A_n^2(t-s)}(-A_n)^{\frac12}A_nB_n(V(s))\|_{l_n^2}\ud s\\
		&\phantom{\le}+ \sqrt{\frac{n}{\pi}}\int_{0}^t\|e^{-A_n^2(t-s)}(-A_n)^{\frac12}\Sigma_n(V(s))R(s)\|_{l_n^2}\ud s\\
		&\le K+K\int_0^t(t-s)^{-\frac34}\|B_n(V(s))\|_{l_n^2}\ud s+K\int_0^t(t-s)^{-\frac14}|R(s)|\ud s\\
		&\le K+K\int_0^t(t-s)^{-\frac34}\big(1+\|V(s)\|_{l_n^6}^3\big)\ud s+K\big(\int_0^t(t-s)^{-\frac12}\ud s\big)^{\frac12}\|R\|_{\mbf L^2(0,T)}\\
		&\le K(a,T).
	\end{align*}
	Finally combining the above estimate,  \eqref{l2H1} and \eqref{VH2}, we have $\sup\limits_{t\in[0,T]}\|V(t)\|_{l_n^\infty}\le K(a,T)$. Thus the proof is complete. 
\end{proof}

%\section*{references}
\bibliographystyle{plain}
\bibliography{mybibfile}

\end{document}